\journal{Applied and Computational Harmonic Analysis}
\DeclareMathOperator{\dvol}{dvol}
\DeclareMathOperator{\vol}{vol}
\DeclareMathOperator{\Real}{Re}
\DeclareMathOperator{\Imag}{Im}
\DeclareMathOperator{\divr}{div}
\DeclareMathOperator{\var}{var}
\DeclareMathOperator{\grad}{grad}
\DeclareMathOperator{\spn}{span}
\DeclareMathOperator{\diag}{diag}
\newcommand{\ii}{\mathrm{i}}
\newtheorem{thm}{Theorem}
\newtheorem{prop}[thm]{Proposition}
\newtheorem{lemma}[thm]{Lemma}
\newdefinition{defn}[thm]{Definition}
\newdefinition{rk}[thm]{Remark}
\newdefinition{alg}{Algorithm}
\newdefinition{cor}[thm]{Corollary}
\begin{document}

\begin{frontmatter}

\title{Data-driven spectral decomposition and forecasting of ergodic dynamical systems}

\author{Dimitrios Giannakis\corref{mycorrespondingauthor}}
\address{Courant Institute of Mathematical Sciences, New York University, New York, NY 10012, USA}
\cortext[mycorrespondingauthor]{Corresponding author \ead{dimitris@cims.nyu.edu}}

\begin{abstract}
  We develop a framework for dimension reduction, mode decomposition, and nonparametric forecasting of data generated by ergodic dynamical systems. This framework is based on a representation of the Koopman and Perron-Frobenius groups of unitary operators in a smooth orthonormal basis of the $ L^2 $ space of the dynamical system, acquired from time-ordered data through the diffusion maps algorithm. Using this representation, we compute Koopman eigenfunctions through a regularized advection-diffusion operator, and employ these eigenfunctions in dimension reduction maps with projectible dynamics and high smoothness for the given observation modality. In systems with pure point spectra, we construct a decomposition of the generator of the Koopman group into mutually commuting vector fields that transform naturally under changes of observation modality, which we reconstruct in data space through a representation of the pushforward map in the Koopman eigenfunction basis. We also establish a correspondence between Koopman operators and Laplace-Beltrami operators constructed from data in Takens delay-coordinate space, and use this correspondence to provide an interpretation of diffusion-mapped delay coordinates for this class of systems. Moreover, we take advantage of a special property of the Koopman eigenfunction basis, namely that the basis elements evolve as simple harmonic oscillators, to build nonparametric forecast models for probability densities and observables. In systems with more complex spectral behavior, including mixing systems, we develop a method inspired from time change in dynamical systems to transform the generator to a new operator with potentially improved spectral properties, and use that operator for vector field decomposition and nonparametric forecasting.           
\end{abstract}

\begin{keyword}
  Koopman operators\sep Perron-Frobenius operators \sep dynamic mode decomposition \sep ergodic dynamical systems \sep time change \sep nonparametric forecasting \sep kernel methods \sep diffusion maps
\end{keyword}

\end{frontmatter}


\section{Introduction}
  
\subsection{\label{secBackground}Background and motivation}
 
In many branches of science and engineering, one is faced with the problems of dimension reduction and forecasting of high-dimensional time series. When these time series are generated by ergodic dynamical systems (or by ergodic components of non-ergodic systems), they posses an important special property, namely that long-time averages are equivalent to expectation values with respect to an invariant measure of the dynamics. This property enables sampling of the full phase space from long time series and sufficiently high-dimensional observables, and data analysis algorithms can be employed to perform dimension reduction and forecasting of these systems with no a priori knowledge of the equations of motion. This nonparametric approach is useful in a variety of contexts, such as when the equations of motion are unknown or partially known, when the system cannot be feasibly simulated, or when the solutions of the equations of motion are complicated and a decomposition into simpler components (modes) is desired.      

The vast array of methods developed to address these goals can be broadly categorized as  state-space or operator-theoretic methods \citep{BudisicEtAl12}. A popular state-space approach is to approximate a nonlinear dynamical system by a collection of local linear models on the tangent planes of the attractor \citep[][]{FarmerSidorovich87,Sauer92,KugiumtzisEtAl98}; other approaches construct global nonlinear models for the dynamical evolution map \cite{BroomheadLowe88}, or nonlinearly project the attractor to lower-dimensional Euclidean spaces and construct reduced models operating in those spaces \cite{KevrekidisEtAl04,TalmonCoifman13,CrosskeyMaggioni17}. A common element of these techniques is that the forward operators of the reduced models are defined in state space; i.e., they map the state at a given time to another state in the future. On the other hand, operator-theoretic techniques \citep[][and others]{MezicBanaszuk04,Mezic05,Mezic13,SchmidSesterhenn08,RowleyEtAl09,ChenEtAl12,Schmid10,BudisicEtAl12,JovanovicEtAl14,TuEtAl14,HematiEtAl15,WilliamsEtAl15,KutzEtAl16,BruntonEtAl17,ArbabiMezic16,ProctorEtAl16,BudisicMezic12,DellnitzJunge99,DellnitzEtAl00,FroylandDellnitz03,Froyland07,Froyland08,FroylandEtAl14b,Froyland05,FroylandPadberg09,FroylandEtAl10,FroylandEtAl10b,SchutteEtAl10,FroylandEtAl14,BerryEtAl15} shift attention away from the state-space perspective, and focus instead on the action of dynamical systems on spaces of observables or measures.

Remarkably, the action of a nonlinear dynamical system on appropriately constructed linear spaces of observables and measures can be characterized without approximation by groups (or semigroups) of \emph{linear} operators, known as Koopman \citep[][]{Koopman31} or Perron-Frobenius operators, respectively. In particular, Koopman operators (also known as composition operators) act on observables by composition with the dynamical flow map, while Perron-Frobenius operators (also known as Ruelle \cite{Ruelle68} or transfer operators) act on measures by pullbacks. Typically, the spaces of observables and measures in question are infinite-dimensional, so one can think of a tradeoff between a finite-dimensional nonlinear system and a group of linear operators acting on an infinite-dimensional space. Nevertheless, the intrinsically linear structure of these spaces makes Koopman and Perron-Frobenius operators amenable to treatment through the full machinery of linear operator theory and the associated finite-dimensional approximation schemes (e.g., Galerkin methods). Data-driven operator-theoretic techniques exploit this structure to perform tasks such as spectral analysis of complex systems \cite{MezicBanaszuk04,Mezic05,Mezic13}, identification of coherent sets (e.g., invariant, almost-invariant, periodic, and almost-periodic sets)  \cite{DellnitzJunge99,DellnitzEtAl00,FroylandDellnitz03,Froyland05,Froyland07,Froyland08,FroylandPadberg09,FroylandEtAl10,FroylandEtAl10b,FroylandEtAl14b}, identification of dynamic modes \cite{SchmidSesterhenn08,Schmid10,RowleyEtAl09,ChenEtAl12,JovanovicEtAl14,TuEtAl14,WilliamsEtAl15,HematiEtAl15,KutzEtAl16,BruntonEtAl17,ArbabiMezic16}, computation of ergodic and harmonic quotients \cite{BudisicEtAl12,BudisicMezic12}, nonparametric prediction \citep[][]{BerryEtAl15}, modeling of metastable \cite{SchutteEtAl10} and slow-fast systems \cite{FroylandEtAl14}, control \cite{ProctorEtAl16}, and other applications. In particular, \cite{BudisicMezic12,BudisicEtAl12} developed a method for analyzing ergodic and harmonic quotients based on the diffusion maps algorithm \cite{CoifmanLafon06}; in what follows, we will use diffusion maps to learn smooth orthonormal basis sets of functions from the data, and employ these bases in various Galerkin and spectral methods for dimension reduction, mode decomposition, and nonparametric forecasting based on Koopman and Perron-Frobenius operators.   

For appropriately chosen spaces of observables and measures, the Koopman and Perron-Frobenius operators are dual pairs, and therefore provide theoretically equivalent information. For instance, in the setting of an ergodic dynamical system, natural spaces of observables are  $L^2 $ spaces of complex-valued scalar functions associated with invariant probability measures, and natural spaces of measures are complex measures with $ L^2 $ densities. Yet, at an applied level, the fact that Koopman and Perron-Frobenius operators act on fundamentally different entities appears to have led to the development of two fairly distinct families of approximation techniques (though the dichotomy is not rigid, and there are references in the literature utilizing results from both the Koopman and Perron-Frobenius perspectives; e.g, \cite{WilliamsEtAl15}).

Historically, data-driven techniques based on Perron-Frobenius operators \cite{DellnitzJunge99,DellnitzEtAl00,FroylandDellnitz03,Froyland05,Froyland07,Froyland08,FroylandPadberg09,FroylandEtAl10,FroylandEtAl10b,SchutteEtAl10,FroylandEtAl14,FroylandEtAl14b} began with the work of Dellnitz and Junge \cite{DellnitzJunge99} in 1999. A common approach in these techniques is to approximate the spectrum of the Perron-Frobenius operator through a scheme known as Ulam's method \cite{Ulam64}. Essentially, this involves partitioning the state space into a finite collection of disjoint subsets, and estimating the transition probabilities between these subsets by counting the corresponding transitions in a large ensemble of simulations or experiments. The resulting transition probability matrix can be interpreted as a Galerkin projection of a smoothed (compact) transfer operator for the system perturbed by a small amount of noise, and the eigenvectors of that matrix at eigenvalues on or near the unit circle are used to identify coherent sets. In subsequent work, Dellnitz, Froyland, et al.~\cite{DellnitzEtAl00,FroylandDellnitz03,Froyland07,Froyland08,FroylandEtAl14b} studied certain classes of systems possessing quasi-compact Perron-Frobenius operators, where it was rigorously established that Ulam's method is able to provide accurate approximations of isolated Perron-Frobenius eigenvalues and their corresponding eigenfunctions.  

Data-driven techniques based on Koopman operators were first developed by Mezi\'c and Banaszuk~\cite{MezicBanaszuk04} and Mezi\'c~\cite{Mezic05} in 2004 and 2005, respectively. In particular, \cite{Mezic05} established the Koopman mode expansion for measure-preserving systems and $ L^2 $ observables. In this setting, the Koopman and Perron-Frobenius operators are unitary and adjoint to one another, and have a well-defined spectral expansion consisting in general of both discrete (pure point) and continuous parts. In \cite{MezicBanaszuk04,Mezic05}, a method for estimating the point spectrum of Koopman operators was developed using generalized Laplace analysis. This approach involves computing harmonic averages (Fourier transforms) of time series of observables for different initial conditions, and identifying the frequencies leading to non-vanishing averages. For those frequencies, the harmonic average corresponds to a projection of the observable onto an eigenspace of the Koopman operator, allowing one to identify Koopman eigenvalues and their corresponding periodic eigenfunctions, as well as spatial patterns called Koopman modes.

In later work, Rowley et al.~\cite{RowleyEtAl09} established that the Koopman mode expansion has close connections with the dynamic mode decomposition (DMD) technique of Schmid and Sesterhenn \cite{SchmidSesterhenn08} and Schmid \cite{Schmid10} for decomposing time-ordered spatiotemporal datasets into so-called dynamic modes. Similarly to the proper orthogonal decomposition (POD)  \citep[][]{AubryEtAl91,HolmesEtAl96}, DMD extracts modes by solving an eigenvalue problem for a matrix $ A $ constructed from experimental snapshots, but instead of the covariance matrix used in POD, DMD employs a time-shifted cross-correlation matrix $A$ capturing the linear dependence of the snapshots at the next time step on the snapshots at the current time step. To address the computational cost associated with the high dimensionality of typical spatiotemporal data, the DMD algorithm in~\cite{RowleyEtAl09} employs an Arnoldi-type iterative method which does not require explicit formation of $ A $. In particular, it was shown that the eigenvalues and eigenvectors obtained via this algorithm approximate the eigenvalues and modes in the Koopman mode expansion in \cite{Mezic05}. Tu et al.~\cite{TuEtAl14} subsequently developed an alternative formulation of DMD based on matrix pseudoinverses (or truncated pseudoinverses) that does not require the data to be in the form of a single time series, and moreover returns estimates of the Koopman eigenfunctions in addition to the eigenvalues and Koopman modes.  

The connections between the Koopman mode expansion and DMD were further studied  by Williams et al.~\cite{WilliamsEtAl15}, who  developed an extended DMD (EDMD) framework. In EDMD, the eigenvalue problem for the Koopman operator is projected to a finite-dimensional matrix eigenvalue problem associated with the action of the Koopman operator on a general dictionary of observables. In particular, standard DMD  can be viewed as a particular instance of EDMD for the dictionary formed by the components of the state vector, but the general approach advocated in EDMD is to employ richer dictionaries in order to improve the accuracy of the computed eigenvalues and Koopman modes. The dictionaries proposed in~\cite{WilliamsEtAl15} include Hermite polynomials in ambient data space, radial basis functions, and discontinuous spectral elements analogous to the bases used in Ulam approximations of Perron-Frobenius operators. When these dictionaries are sufficiently rich, EDMD converges to a Galerkin method for the Koopman eigenvalue problem, although issues related to non-compactness and potentially continuous spectrum of the Koopman operator were not addressed in \cite{WilliamsEtAl15}. More recently, Brunton et al.\ \cite{BruntonEtAl17} and Arbabi and Mezi\'c \cite{ArbabiMezic16} have developed Hankel matrix techniques for Koopman eigenfunction approximation using dictionaries of delay-coordinate mapped observables.
        
Irrespective of the method employed to compute them, the triplets of eigenvalues, eigenfunctions, and Koopman modes produce a decomposition a complex spatiotemporal signal into simpler ``building blocks'', which are intrinsic to the dynamical system generating the data. In particular, the Koopman eigenvalues and eigenfunctions are independent of observation modality, meaning that data acquired from different sensors would yield the same results for the eigenvalues and eigenfunctions, so long as the data contain sufficiently rich information.  In contrast, the results of POD-type techniques and many nonlinear dimension reduction techniques depend on the observation modality. The issue of invariance under changes of observation modality has been at the focus of several recent data analysis techniques \citep[][]{SingerCoifman08,TalmonCoifman13,BerryEtAl13,Giannakis15,BerrySauer16b,DsilvaEtAl16,YairEtAl16}. Of course, the efficacy of the Koopman and Perron-Frobenius methods in real-world applications depends strongly on a number of factors, including the properties of the underlying dynamical system, the discretization scheme, and the measurement and data acquisition apparatus. To motivate our work, we end this section by summarizing some of the challenges encountered in data-driven operator-theoretic techniques for dynamical systems.

First, at a fundamental level, for systems of high complexity the spectra of the Koopman and Perron-Frobenius operators are not amenable to eigendecomposition. For instance, a necessary and sufficient condition for a dynamical system to be weak-mixing (a weak form of chaos, which implies ergodicity) is that its Koopman operators have no $ L^1 $  eigenfunctions other than the constant function \cite{KornfeldSinai00,HasselblattKatok02}. While such cases with continuous spectra can be theoretically handled through the Koopman mode expansion \cite{Mezic05}, in numerical implementations involving matrix algebra one always obtains eigenvectors, whose properties become difficult to analyze. These observations suggests that numerical methods for Koopman eigenfunctions must involve, either implicitly or explicitly, some form of regularization or annealing. For instance, in dictionary or Galerkin discretizations regularization is implicitly provided through projection on a finite-dimensional solution space. In fact, regularization is warranted even in simple cases involving  systems with pure point spectra and complete bases of Koopman eigenfunctions. For example, in irrational flows on tori (arguably, among the simplest ergodic dynamical systems)  and at dimension greater than 1, the frequencies associated with the Koopman eigenvalues are dense on the real line, and this can lead to poorly conditioned numerical schemes unless care is taken to filter out highly oscillatory solutions. 

A second important issue concerns the choice of dictionary or basis employed for operator approximation. This issue becomes especially pertinent in situations where the ambient space dimension is high, and/or the data lies on an a priori unknown subset of ambient data space of small or zero measure. As a concrete example, consider again an irrational flow on an $m$-torus embedded in $ \mathbb{ R }^n $. In this case, computing Koopman eigenfunctions in a smooth function basis for $ \mathbb{ R }^n $ (e.g., Hermite polynomials) can be problematic due to computational cost (which increases exponentially with $ n$ independently of the intrinsic dimension $ m $), but more importantly due to the fact that the eigenfunctions are supported on a set of Lebesgue measure zero, and therefore cannot be well represented in a smooth basis for functions on $ \mathbb{ R }^n $. In such situations, which are quite prevalent in real-world applications, it is preferable to work in a basis which is adapted to the intrinsic geometry of the data.

A third issue concerns the identification of spatial modes. Since the Koopman and Perron-Frobenius eigenfunctions are intrinsic to the dynamical system generating the data, one would like that the associated modes are also intrinsic; i.e., that they transform naturally as vector-valued functions under changes of observation modality. However, in the standard DMD and EDMD construction, the Koopman modes are given by global averages of the data weighted by the corresponding Koopman eigenfunctions, and such averages do not transform naturally under general nonlinear transformations of the data.
     
\subsection{Contributions of this work}

In this paper, we develop operator-theoretic techniques for dimension reduction and mode decomposition of data generated by ergodic dynamical systems. Building on these methods, we also develop nonparametric techniques for forecasting observables and probability measures. A key ingredient of our approach is a data-driven orthonormal basis for the $ L^2 $ space of the dynamical system constructed using the diffusion maps algorithm \cite{CoifmanLafon06} in conjunction with variable-bandwidth kernels \cite{BerryHarlim16}. The construction of this basis follows the approach of Berry et al.~\citep[][]{BerryEtAl15} for approximating Kolmogorov and Fokker-Planck operators (the stochastic analogs of the Koopman and Perron-Frobenius operators, respectively) of stochastic dynamical systems on manifolds. Here, we employ this basis in a family of spectral and  Galerkin methods for the eigenvalue problem for the generator of the Koopman group, reconstruction of vector fields (which can be thought of as a generalization of the  notion of Koopman modes), and nonparametric forecasting. Ergodicity enables the implementation of these techniques from a single time series of measurements of the state vector without requiring (potentially costly  or infeasible) ensembles of experiments with different initial conditions. The techniques are also applicable in the case of partial observations, so long as the observation map is such that one of the variants of Takens' delay embedding theorem applies \cite{PackardEtAl80,Takens81,SauerEtAl91,Robinson05,DeyleSugihara11}. We demonstrate our approach through analytical and numerical applications to ergodic dynamical systems on the 2- and 3-torus, including systems with mixing \cite{Fayad02} and fixed points \cite{Oxtoby53}. 

The data-driven basis from diffusion maps has several attractive properties that help address the challenges in operator-theoretic techniques outlined in Section~\ref{secBackground}:
\begin{enumerate}
\item The basis functions are naturally supported on the subset of ambient data space on which the system evolves (e.g., a low-dimensional attractor), and their computation requires no a priori knowledge of that subset. In particular, under relatively mild assumptions, the diffusion maps basis converges in the limit of large data to a complete orthonormal basis of the $ L^2 $ space of the dynamical system. By orthonormality of the basis, passing from a Koopman to a Perron-Frobenius operator and vice versa can be accomplished by a standard matrix transpose.  
\item The cost of computing the basis functions scales linearly with the ambient space dimension $ d $, and after the basis has been computed, the cost of operator approximation is independent of $ d$. This property is particularly important in high-dimensional applications such as analysis of engineering and geophysical fluid flows (e.g., \cite{GiannakisEtAl15,SlawinskaGiannakis16}). 
\item Associated with the diffusion eigenfunctions is a Dirichlet energy (given by the corresponding eigenvalues), which measures the roughness of the eigenfunctions in a Riemannian geometry that depends on the diffusion maps kernel. Here, the kernel is constructed explicitly so that the Riemannian metric, $h$, has compatible volume form with the invariant measure of the dynamics. As a result, our Galerkin approximation spaces for the Koopman eigenvalue problem are spanned by orthonormal functions with minimal expected roughness (with respect to the invariant measure). 
\item Through an eigenvalue-dependent normalization, the diffusion eigenfunctions form a natural basis for the $ H_1 $ Sobolev space associated with $h$. In this basis, the Dirichlet form associated with $ h $ is represented by an identity matrix, and therefore diffusion regularization can be carried out in a computationally efficient and well-conditioned manner. 
\item Through the use of kernels in operating in Takens delay-coordinate space \cite{GiannakisMajda12a,BerryEtAl13}, the basis functions from diffusion maps can be made to converge to Koopman eigenfunctions in pure point spectrum systems, leading to efficient and noise-robust Galerkin schemes.    
\item The variable-bandwidth family of kernels provides a flexible framework to perform a change of measure which is equivalent to a time change \citep[][]{KatokThouvenot06} in the dynamical system---we use such a change of measure as a regularization tool in certain classes of systems with continuous Koopman spectra. 
\end{enumerate}

In what follows, we first develop our approach in the case of systems with pure point spectra, where the Koopman eigenfunctions form a complete orthonormal basis of the $ L^2 $ space associated with the invariant measure. We formulate a Galerkin method for the Koopman eigenvalue problem in the appropriate  $ H_1 $ space on the phase space manifold---this is particularly important in systems with non-isolated eigenfrequencies (including pure point spectrum systems), as it eliminates highly oscillatory eigenfunctions from the spectrum through Tikhonov regularization. Using the Dirichlet energy functional available from diffusion maps, we select the least rough set of Koopman eigenfunctions with corresponding rationally independent eigenvalues, and take advantage of a group structure of the Koopman eigenfunctions to generate a complete basis of the $ L^2 $ space on the manifold recursively from the group generators. This basis is employed to decompose the vector field of the dynamics into a set of linearly-independent, nowhere-vanishing, mutually commuting vector fields (i.e., a set of dynamically independent components), and to represent the pushforward map carrying along these vector fields in data space. Associated with the reconstructed vector fields are spatiotemporal patterns that can be thought of as analogs of Koopman modes, but transforming naturally (as tensors) under changes of observation modality. The generating eigenfunctions are also used to construct factor maps \cite{MezicBanaszuk04,Mezic05} from the phase space manifold to the complex plane, under which the dynamical vector field is projectible, and the dynamics in the image spaces are simple harmonic oscillations with frequencies given by the corresponding eigenvalues. In addition, we show that through the use of delay-coordinate maps, the Laplace-Beltrami operator approximated by diffusion maps in the limit of infinitely many delays commutes with the dynamical vector field (which is equivalent to the generator of the Koopman group restricted to smooth functions); as a result, the two operators have common eigenfunctions. This result bridges these two important families of dimension reduction and mode decomposition techniques, and naturally leads to efficient Galerkin schemes for the Koopman eigenvalue problem with high robustness to i.i.d.~measurement noise. 

Besides dimension reduction and mode decomposition, we utilize the Koopman eigenvalues and eigenfunctions in nonparametric forecasting schemes for probability measures and observables. This approach is closely related to the nonparametric forecasting method for stochastic systems developed in \cite{BerryEtAl15}, with the difference that we advance probability densities using the simple harmonic oscillator structure of the Koopman eigenfunctions, as opposed to taking powers of a finite-dimensional Fokker-Planck operator approximation. In particular, Koopman eigenfunctions of pure point spectrum systems evolve as uncoupled oscillators with frequencies given by the corresponding eigenvalues, and by completeness, these eigenfunctions can be used to predict the time evolution of arbitrary observables in $ L^2 $ and probability measures with $ L^2 $ densities. 

Next, we consider systems whose Koopman operators do not have pure point spectra, including a class of weak mixing systems with no nonconstant eigenfunctions. There, a regularization technique inspired from the theory of time change in dynamical systems \citep[][]{KatokThouvenot06} is developed which attempts to construct a new dynamical system having the same orbits as the original one, but with improved spectral properties for eigendecomposition and forecasting. In particular, we perform a time-change transformation using the norm of the dynamical vector field in the ambient data space as the time-change function. This technique can be applied to arbitrary dynamical systems as it requires no a priori knowledge of the governing equations, and in special cases it recovers a pure point spectrum system with the same orbits as the mixing system generating the data. We generalize our vector field decomposition and nonparametric forecasting techniques to the time-changed framework, where the recovered vector field components become non-commuting, and the simple harmonic oscillators acquire couplings and time-dependent frequencies.  

This paper is organized as follows.  In Section~\ref{secErgodicity}, we lay out our notation and summarize basic results from ergodic theory. In Section~\ref{secPurePointSpectra}, we formulate our dimension reduction, mode decomposition, and forecasting techniques for systems with pure point spectra. We discuss the numerical implementation of these techniques, including the Galerkin method with diffusion regularization for the Koopman eigenvalue problem, in Section~\ref{secGalerkinImplementation}, where we also present numerical applications to variable-speed flows on the 2-torus. In Section~\ref{secTakens}, we extend this scheme to delay-coordinate mapped data, and establish the correspondence between the Laplace-Beltrami and Koopman operators arising in the limit of infinitely many delays along with methods for removing i.i.d.\ observational noise. In Section~\ref{secTimeChange}, we formulate our time-change approach for improving the spectral properties of the Koopman group, and present applications to mixing dynamical systems on the 3-torus and fixed-point ergodic flows on the 2-torus. We conclude in Section~\ref{secConclusions}. Appendices A--C contain auxiliary material and technical results, including  high-level listings of the algorithms developed in the main text, and an asymptotic analysis of denoising using diffusion maps in conjunction with delay-coordinate maps. Videos showing the temporal evolution of probability densities in the applications of Section~\ref{secGalerkinImplementation} and~\ref{secTimeChange} are included as supporting online material (SOM). Also included as SOM is a comparison of our Koopman eigenfunction results to Koopman eigenfunctions computed via EDMD. An application of the techniques developed here to atmospheric dynamics can be found in \cite{GiannakisEtAl15,SlawinskaGiannakis16}.

\section{\label{secErgodicity}Notation and basic results from ergodic theory}

Let $ M $ be a closed, connected, compact, orientable, smooth (of class $ C^\infty $),  $ m $-dimensional manifold with a Borel $ \sigma $-algebra $ \mathcal{ B } $. In what follows, $ M $ will be the phase space of a continuous-time dynamical system with a smooth, invertible evolution map $ \Phi_t : M \mapsto M $, $ t \in \mathbb{ R } $. The evolution map induces a map $ \Phi_{t*} $ on measures on $ \mathcal{ B } $ such that for a measure $ \mu $ and $ A \in \mathcal{ B } $, $ \Phi_{t*} \mu( A ) = \mu( \Phi_t^{-1} (A) ) = \mu( \Phi_{-t}( A )  ) $. We will take $ \mu $ to be a $ \Phi_t $-invariant probability measure  (i.e., $ \mu( M ) = 1 $ and $ \Phi_{t*} \mu = \mu $), characterized by a smooth, positive density relative to local coordinates. We consider that  the dynamical system is observed at a fixed sampling interval $ T  $ through a smooth, vector-valued observation function $ F : M \mapsto \mathbb{ R }^d $. That is, we have a dataset consisting of time-ordered samples $ \{ x_0, x_1, \ldots, x_{N-1} \} $ in the $ d $-dimensional data space, with 
\begin{equation}
  \label{eqDataset}
  x_i = F( a_i ), \quad a_i = \Phi_{t_i}( a_0 ), \quad t_i = T ( i - 1 ). 
\end{equation}

Without loss of generality, we assume that $ F $ is an embedding of $ M $ into $ \mathbb{ R }^d $, i.e., it is one-to-one and its derivative, $ F_* \rvert_a : T_a M \mapsto T_{F(a)} \mathbb{ R }^d $, has full rank at every $ a \in M $. If $ F $ is not an embedding, then with high probability an embedding can be constructed from $ F $ using delay-coordinate maps \citep[][]{PackardEtAl80,Takens81,SauerEtAl91,Robinson05,DeyleSugihara11}, and the methods described below can be applied in that space (e.g., \cite{GiannakisMajda12a,BerryEtAl13}). The embedding of $ M $ into data space induces a smooth Riemannian metric tensor $ g $ on $ M $ given by pulling back the canonical inner product of $ \mathbb{ R }^d $, i.e., $ g( u_1, u_2 ) = F_* \, u_1 \cdot F_* \, u_2 $ for any two tangent vectors $ u_1, u_2 \in T_a M $. We denote the Riemannian volume form  associated with $ g $ and the corresponding Riemannian measure by $ \dvol_g $ and $ \vol_g $, respectively. From our point of view, $ F $, $ g $, and $ \vol_g $ are extrinsic objects to the dynamical system and we would like the results of our dimension reduction and mode decomposition methods to not depend on them.

Next, we introduce the Koopman and Perron-Frobenius groups of operators  and their generator which will play a central role in our study. We refer the reader to one of the many books and reviews on ergodic theory (e.g., \cite{Walters81,BunimovichEtAl00,HasselblattKatok02,EisnerEtAl15}) for more detailed expositions on these and other related topics. Let $ L^2( M, \mu ) $ be the Hilbert space of square-integrable, complex valued functions on $ M $ with inner product $ \langle f_1, f_2 \rangle = \int_M f_1^* f_2 \, d\mu $ and the corresponding norm $ \lVert f \rVert = \langle f, f \rangle^{1/2} $. In general, $ L^2( M, \mu ) $ is different from the Hilbert space $ L^2( M, \vol_g ) $ associated with the Riemannian inner product $ \langle f_1, f_2 \rangle_g = \int_M f_1^* f_2 \dvol_g $, but because $ M $ is compact the two spaces are isomorphic. The Koopman operator $ U_t : L^2( M, \mu ) \mapsto L^2( M, \mu ) $ for time $ t \in \mathbb{ R } $ is defined via  $ U_t f = f \circ \Phi_t $; that is, for $ \mu $-a.e.\ $ a \in M$, the $ L^2 $ function  $ f_t = U_t f $ is equal to $ f $ evaluated at the shifted point $ a_t = \Phi_t( a )$ along the dynamics, $ f_t( a ) = f( a_t ) $. The set of Koopman operators $ \{ U_t \}_{t\in\mathbb{R}} $ forms an Abelian group under composition of operators, $ U_{t_1} U_{t_2} = U_{t_1+t_2} $, with $ U_0 = I $ acting as the identity element of the group. These operators are unitary on $ L^2( M, \mu ) $ with $ U^*_t = U^{-1}_t = U_{-t} $, and therefore they induce an isometry, $ \lVert U_t f \rVert = \lVert f \rVert $, for all $ t \in \mathbb{ R } $. Moreover, the map $ t \mapsto U_t $ is continuous in the strong operator topology. Besides $ U_t : L^2( M, \mu ) \mapsto L^2( M, \mu ) $ analogous isometric Koopman operators can be defined on other $ L^p $ spaces with $ p \in [ 1, \infty ] $, and all Koopman operators on $ L^p( M, \mu ) $ extend to the Koopman operator on $ L^1( M, \mu ) $. Unless otherwise stated, in what follows we will work in the $L^2 $ setting. 

By Stone's theorem, the generator of the Koopman group on $ L^2( M, \mu ) $, defined as $  \tilde v : f \mapsto \tilde v( f )= \lim_{t\to 0} ( U_t f - f ) / t  $ whenever that limit exists, is an unbounded, skew-adjoint operator, $ \tilde v^* = - \tilde v $, with a dense domain $ D(\tilde v) \subset L^2(M,\mu) $. This operator is the maximal skew-adjoint extension of the smooth vector field $ v : C^\infty( M ) \mapsto L^2( M, \mu ) $ with $  v(f) = \lim_{t\to0} ( f \circ \Phi_t - f ) / t $, giving the directional derivative of functions along the dynamical flow. For every observable $ f \in D( \tilde v ) $ the map $ t \mapsto U_t f $ is differentiable at all $ t \in \mathbb{ R } $, and $ U_t f $ is governed by the evolution equation
\begin{equation}
  \label{eqUEvolve}
  \frac{ d\ }{ dt } U_t f = \tilde v U_t f = U_t \tilde v f.
\end{equation} 
Moreover, by the spectral theorem for skew-adjoint operators, for any $  t \in \mathbb{ R} $, we have $ U_t = e^{t \tilde v } $.

An important property of the dynamical vector field $ v $ is the Leibniz rule,
\begin{equation}
  \label{eqLeibniz}
  v( f_1 f_2 ) = v( f_1 ) f_2 + f_1 v( f_2 ), \quad \forall f_1, f_2 \in C^\infty( M ).
\end{equation}
Moreover, because $ \Phi_t $ preserves $ \mu $, $ v $ has the Liouville property, $ \divr_\mu v = 0 $, i.e., it generates an incompressible flow on $M $ with respect to the invariant measure.  In data space, $ v $ corresponds to a vector field $ V = F_* v \in T \mathbb{ R }^d $, and because of the canonical isomorphism $ T \mathbb{ R }^d \simeq \mathbb{ R }^d $, we can intuitively think of $ V $ as a collection of Euclidean vectors (``arrows'') tangent to the data manifold $ F( M ) $ along the direction of the dynamical flow. Note that $ V $ can be approximated at $ O( T^p ) $ accuracy by taking finite differences of the time-ordered snapshots $ \{ x_i \} $ \cite{Giannakis15}; e.g., $ ( x_{i+1} - x_{i-1} ) / ( 2 T ) $ is an $ O( T^2 ) $ approximation  of $ V $ at $ x_i $.  In Section~\ref{secVectorField}, we will represent the pushforward map $ F_* $ in a smooth orthonormal basis of $ L^2( M, \mu ) $, allowing us to reconstruct arbitrary vector fields on $ M $, and in particular, a vector field decomposition of $ v $. Recently, it was shown that an analogous property to the Leibniz rule in~\eqref{eqLeibniz} also holds for the generator. In particular, it follows from \cite{TerEstLemanczyk17}, Theorem~1.1,  that $ \tilde v $ acts as a derivation on $ \mathcal{ A } := D( \tilde v ) \cap L^\infty( M, \mu ) $ (which is a dense, invariant subalgebra of $L^2(M,\mu) $); i.e.,\ $ \tilde v( f_1 f_2 ) = \tilde v( f_1 ) f_2 + f_1 \tilde v( f_2 ) $ holds for any $ f_1, f_2 \in \mathcal{A}$. A related property of Koopman operators is that $ U_t( f_1 f_2) = U_t( f_1 ) U_t(f_2) $ for any $ f_1, f_2 \in L^2(M,\mu ) $, where $ U_t $ on the left-hand side is understood as an operator on $ L^1(M,\mu) $.  Thus, $ U_t $ acts distributively on products of $ L^2 $ functions.

Let now $ \mathcal{ M } $ be the space of complex-valued measures on $ \mathcal{ B } $ with densities in $ L^2( M,\mu ) $. The Perron-Frobenius operator,  $ \tilde U_t : L^2( M, \mu ) \mapsto L^2(M,\mu ) $, characterizes the action of the flow on the densities of measures in $ \mathcal{ M } $; that is, given $ \nu \in \mathcal{ M } $ with $ d \nu / d\mu = \rho $, then $ \rho_t  = \tilde U_t \rho $ is equal to the density  $ d( \Phi_{t*}\nu ) / d\mu $. In the $L^2$ setting of interest here, the Koopman and Perron-Frobenius operators form an adjoint pair, $ \tilde U_t = U^*_t = e^{-t \tilde v}$, so we can characterize both through the generator $ \tilde v $.

Next, consider the eigenvalue equation for the generator,
\begin{equation}
  \label{eqEigV}
  \tilde v( z ) = \lambda z, \quad \lambda \in \mathbb{ C }, \quad z \in D( \tilde v ).
\end{equation}
Because $ \tilde v $ is skew-adjoint on $ L^2( M, \mu ) $, for any solution to~\eqref{eqEigV} we  have $ \lambda = \ii \omega $, where $ \omega $ is a real number that corresponds to an intrinsic (i.e., independent of observation modality) frequency of the dynamical system. Thus, the set of eigenvalues of $ \tilde v $ lies on the imaginary line, and if $ \lambda $ is the eigenvalue corresponding to the eigenfunction $ z $ then $ \lambda^* = - \lambda $ is also an eigenvalue corresponding to the eigenfunction $ z^* $. Because $ \tilde v $ annihilates constant functions, \eqref{eqEigV} always has the solution $ \lambda = 0 $, $ z = 1 $, where $ 1 $ denotes here a constant function equal to one  $ \mu $-a.e. It also follows from the skew-adjointness of $ \tilde v $ that eigenfunctions corresponding to distinct eigenvalues are orthogonal on $ L^2( M , \mu ) $. Another important property of $ ( \lambda, z ) $, which is a consequence of $ U_t = e^{t\tilde v} $, is that $ z $ is an eigenfunction of $ U_t $ at eigenvalue $ e^{\lambda t } = e^{\ii\omega t } $; that is, for $ \mu $-a.e.\ $ a \in M $,
  \begin{equation}
    \label{eqZSHO}
    z( \Phi_t( a ) ) = e^{i\omega t } z( a )
  \end{equation}
  In other words, along a typical  orbit of the dynamics passing through $ a $, the values $ z( \Phi_t( a ) ) $ of the observable $ z $ evolve like the state of simple harmonic oscillator of frequency $ \omega $ subject to the initial condition $ z(a) $.

In practical applications, the data is acquired at the finite sampling interval $ T $, and we can consider the discrete-time system $ ( M, \mathcal{ B }, \mu, \hat \Phi_n ) $, where $ \hat \Phi_n = \Phi_{nT} $ and $ n \in \mathbb{ Z } $. In the discrete case, the Koopman group $ \{ \hat U_n \}_{n \in \mathbb{Z}} $ is generated by $  U_T $ so that $ \hat U_n = U_T^n $. Hereafter, we will assume that this discrete-time system is ergodic. It then follows by Birkhoff's pointwise ergodic theorem that for $ \mu $-a.e.\ $ a \in M $ and for every $ f \in L^1( M, \mu ) $ the temporal means, $ \bar f_t( a ) = \int_0^t f( \Phi_{\tau} a ) \, d\tau / t $ and $ \bar f_N( a ) = \sum_{i=0}^{N-1} f( \hat \Phi_i a ) / N $ in the continuous- and discrete-time cases, respectively, both converge  to the ergodic average $ \int_M f \, d\mu $. Under these assumptions, inner products on $ L^2( M, \mu ) $ and other related Sobolev spaces can be approximated by Monte Carlo sums of sampled time series; i.e., given the time series $ \{ f_{1i} \}_{i=0}^{N-1} $ and $ \{ f_{2i} \}_{i=0}^{n-1} $ with $ f_{ji} = f_j( \hat \Phi_i a ) $ and $ f_j \in L^2( M, \mu ) $, then for $ \mu $-a.e.\ $ a \in M $ we have 
\begin{equation}
  \label{eqMonteCarlo}
  \lim_{N\to\infty} \frac{ 1 }{ N } \sum_{i=0}^{N-1} f^*_{1i} f_{2i} = \int_M f^*_1 f_2 \, d\mu = \langle f_1, f_2 \rangle.
\end{equation} 
This property makes the data-driven techniques in Sections~\ref{secGalerkinImplementation} and~\ref{secTimeChange} ahead and in \cite{BerryEtAl15} feasible. Note that if the dynamical system is not ergodic but preserves $ \mu $, the Monte Carlo sum in~\eqref{eqMonteCarlo} will converge to the $ L^2 $ inner product over the ergodic component associated with the point $ a $, and the techniques developed below can be applied for that component provided that the assumptions laid out in the beginning of Section~\ref{secErgodicity} are satisfied.

An important spectral implication of ergodicity (which is, in fact, equivalent to the usual measure-theoretic definition), is that the unit eigenvalue of $ U_t : L^1(M,\mu) \mapsto L^1(M,\mu) $ is simple, and the corresponding eigenspace is spanned by the function equal $ \mu $-a.e.\ to 1. As a result, given any eigenfunction $ z $ of $ U_t $ acting on $ L^2(M,\mu) $, we have $\lvert z \rvert^2 \in L^1( M,\mu ) $, and $ U_t( \lvert z^2 \rvert ) = U_t( z ) U_t(z^*) = \lvert z \rvert^2 $,  which implies that $ \lvert z \rvert^2 $ is an eigenfunction at eigenvalue 1 and therefore $ \mu $-a.e.\ constant. Thus, in the presence of ergodicity, every eigenfunction $ z $ of the Koopman operator on $ L^2 $ lies in $ L^\infty $, and therefore in the invariant subalgebra $ \mathcal{ A }$. In particular, $ z $ can be normalized to take values on the unit circle on the complex plane, and because  of~\eqref{eqZSHO},  for $ \mu $-a.e.\ $ a \in M $,
\begin{equation}
  \label{eqZFactor}
  z \circ \Phi_t(a)  = \Xi_t \circ z( a ), 
\end{equation}
where $ \Xi_t : S^1 \mapsto S^1 $ is the circle rotation with frequency $ \omega $. In other words, the flow  $ \Phi_t $ is metrically semiconjugate to the rotation on the unit circle with frequency $ \omega $, with $ z $ acting as a semiconjugacy map.  If, in addition, $ z $ is continuous, the semiconjugacy is also topological. The use of such factor maps in data-driven techniques was originally proposed in \cite{MezicBanaszuk04,Mezic05}.  

These results can also be stated in terms of the generator. In particular, ergodicity implies that $ \tilde v $ has a simple eigenvalue $ \lambda = 0 $ corresponding to the constant eigenfunction. Moreover, since in this case all Koopman eigenfunctions lie in $ \mathcal{ A } $, it follows from the Leibniz property  that if $ z_1 $ and $ z_2 $ are eigenfunctions corresponding to the eigenvalues $ \lambda_1 $ and $ \lambda_2 $, respectively, then $ z_1 z_2 $ is also an eigenfunction corresponding to the eigenvalue $ \lambda_1 + \lambda_2 $. Thus, the  eigenfunctions and eigenvalues of $ \tilde v $ have a group structure under multiplication and addition, respectively. Let now $ \mathcal{ D } $ be the closed subspace of $ L^2(M,\mu) $ spanned by Koopman eigenfunctions, and note that $ \mathcal{ D } $ is $ U_t $-invariant and necessarily infinite-dimensional if it contains nonconstant functions. As we will see below, the group structure of Koopman eigenfunctions is highly beneficial, for it allows one to generate an orthonormal basis of $ \mathcal{ D} $ from a finite number of eigenfunctions corresponding to rationally independent eigenvalues. In contrast, the spectra of the diffusion operators used traditionally for geometrical analysis of data do not have this group structure, though on compact manifolds these operators have pure point spectra with the associated complete orthonormal eigenfunctions (which, as stated in Section~\ref{secBackground}, is not necessarily the case for Koopman operators).

\begin{rk}[Continuous vs.\ discrete time]\label{remTime}
   While some of the constructions discussed below can also be be made using the discrete-time generator $ U_T $ instead of  $\tilde v $ (replacing the eigenvalue $ \lambda $ in~\eqref{eqEigV} by $ e^{\lambda T} $), the skew-adjointness of $ \tilde v $, which is a consequence of the continuity structure of the Koopman group $ \{ U_t \}_{t\in\mathbb{R}} $, provides additional properties such as a the Leibniz rule, the ability to reconstruct in data space via the pushforward map $ F_* $, and a coercivity property of an associated regularized generator (see~\eqref{eqCoercivity} ahead). Thus, in what follows we carry out our analysis in the continuous-time setting, and use finite differences for numerical approximation. Similarly, the time-change techniques of Section~\ref{secTimeChange} are also more naturally formulated in the continuous-time setting and implemented with finite-difference approximations.    
\end{rk}

It follows from the spectral theorem for skew-adjoint operators that in mixing systems the spectrum of $ \tilde v $ consists of an eigenvalue at zero and a nonempty continuous spectrum (but empty residual spectrum) \cite{KatokThouvenot06}. This observation, as well as the existence of many known ergodic dynamical systems with exotic spectral behavior (e.g., \cite{KornfeldSinai00,KatokThouvenot06}) raises concerns about the suitability of data-driven eigendecomposition techniques involving the generator in systems of high complexity. In Section~\ref{secTimeChange}, we will present a regularization scheme based on the theory of time change in dynamical systems that attempts to transform $ \tilde v $ to a generator of a dynamical system which is more amenable to eigendecomposition via numerical methods. For now, however, we restrict attention to systems with pure point spectra where the spectral properties of $ \tilde v $ are ``optimal'' for both dimension reduction and nonparametric forecasting. 

\begin{defn}[\label{defnPurePoint}Pure point spectrum] A dynamical system $ ( M, \mathcal{ B }, \mu, \Phi_t ) $ is said to have pure point spectrum if there exists an orthonormal basis of $ L^2( M, \mu ) $ consisting of eigenfunctions of its generator $ \tilde v $. We say that the spectrum is generated by $ l $ basic frequencies if there exist $ l $ rationally independent real numbers $ \{ \Omega_i \}_{i=1}^l $ such that the eigenvalues in~\eqref{eqEigV} can be expressed as $ \lambda_k = \ii \sum_{i=1}^l k_i \Omega_i $, with $ k = ( k_1, k_2, \ldots, k_l ) \in \mathbb{ Z }^l $. Denoting the eigenfunction corresponding to the eigenvalue $ \ii \Omega_i $ by $ \zeta_i $, where we take $ \lVert \zeta_i \rVert= 1 $ by convention, the eigenfunction corresponding to eigenvalue $ \lambda_k $ is given by $ z_k = \prod_{j=1}^l \zeta_i^{k_i} $.
\end{defn}

\begin{rk}[Non-isolated eigenvalues]
  \label{remEigDense}The set of frequencies $ \{ \sum_{i=1}^l k_i \Omega_i \mid k_1, \ldots, k_l \in \mathbb{ Z } \} $ is countable and therefore has zero Lebesgue measure on $ \mathbb{ R } $. However, for $ l \geq 2 $, and by rational independence of the $ \Omega_i $,  the set of frequencies is dense in $ \mathbb{ R } $; i.e., $ \tilde v $ has no isolated eigenvalues.  Moreover, the basic frequencies  $ \{ \Omega_i \}_{i=1}^m $ are non-unique as there exist linear combinations of the $ \Omega_i $ with nonzero integer coefficients that are also rationally independent. This fact raises the questions of how to select appropriate generators of the spectrum, and how to ensure the good conditioning of approximate generators obtained via finite-dimensional numerical algorithms. We will address these issues in Sections~\ref{secDimensionReduction} and~\ref{secGalerkin}, respectively.       
\end{rk}

An important property of systems with pure point spectra is the existence of a unitary Fourier operator $ \mathcal{ U } : L^2( M,\mu) \mapsto \ell^2 $, defined as $ \mathcal{ U } f = \hat f = ( \hat f_k )_k $, where $ \hat f_k = \langle z_k, f \rangle $. That is, $ \mathcal{ U } $ maps $ f $ to its corresponding sequence of expansion coefficients in the Koopman eigenfunction basis. Applying this operator pointwise to the vector-valued observation map $  F $ leads to the decomposition 
\begin{equation}
  \label{eqFZDecomp}\
  F = \mathcal{U}^{-1} \hat F, \quad \hat F = \mathcal{U} F = ( \hat F_k )_k, \quad F_k  = \langle z_k, F \rangle,
\end{equation}
where the Fourier coefficients $ \hat F_k $ are now spatial patterns in $ \mathbb{ R }^d $. In particular, these patterns are the Koopman modes introduced in \cite{MezicBanaszuk04,Mezic05}. In Section~\ref{secVectorField}, we will see that Koopman modes also arise naturally in the spectral representation of the pushforward map $ F_* $ for vector fields. 

A key property of the Fourier operator is that it transforms the Koopman group and its generator into multiplication operators. Specifically, we have $ \mathcal{ U } U_t \mathcal{ U }^{-1}  = T_{e^{\ii\omega t}} $ and and $ \mathcal{ U } \tilde v \mathcal{ U }^{-1}  = T_{\ii \omega} $, where $ T_{e^{\ii\omega t}} : \ell^2 \mapsto \ell^2 $ is the bounded, unitary multiplication operator by $ e^{\ii\omega t} = ( e^{\ii \omega_k t } )_k $, and $ T_{\ii \omega} : D( T_{ \ii \omega} ) \mapsto \ell^2 $ is the skew-adjoint, unbounded multiplication operator by $ \ii \omega = ( \ii \omega_k )_k $ with the (dense) domain $ D( T_{\ii \omega }) = \{ ( \hat f_k )_k \in \ell^2 \mid ( \omega_k \hat f_k )_k \in \ell^2 \} $. In the Fourier representation, the evolution equation~\eqref{eqUEvolve} becomes
\begin{equation}
  \label{eqUEvolveFourier}
  \frac{ d\ }{ dt } \hat f_t = T_{\ii \omega} \hat f_t, \quad \hat f_t = \mathcal{ U } U_t f = ( \hat f_k(t) )_k.
\end{equation}
That is, the Fourier coefficients evolve as uncoupled simple harmonic oscillators, $ \frac{ d \hat f_k( t ) }{ dt } = \ii \omega_k \hat f_k( t ) $, whose solutions $ \hat f_k( t ) = \hat e^{\ii \omega_k t } f_k( 0 )  $ express the fact that $ \hat f_t = T_{e^{\ii \omega t} } \hat f_0 $. Note that by virtue of the group structure of Koopman eigenfunctions, one needs to have access to only $ l $ generating eigenfunctions and the corresponding basic frequencies in order to evaluate the action of $ \mathcal{ U } $, $ T_{\ii \omega} $, and $ T_{e^{\ii\omega t}} $.  The group structure of the eigenfunctions also leads to a useful convolution relationship giving the Fourier coefficients of products of functions (which we do not quote here in the interest of brevity).

A classical result in ergodic theory \cite{HasselblattKatok02} states that ergodic dynamical systems with pure point spectra are metrically isomorphic (though not necessarily homeomorphic) to translations on compact Abelian groups equipped with the Haar measure. In particular, for pure point systems realized through diffeomorphisms of a smooth $m $-dimensional manifold $M$ (which is the case studied here) with  $ C^\infty $ Koopman eigenfunctions, the number $ l $ of basic frequencies is necessarily equal to $ m $, and $ M$ is diffeomorphic to the $ m $-torus. In that case, there exist canonical angle coordinates $ \theta = ( \theta^1, \ldots, \theta^m ) $ on $ M $ with $ \theta_i \in [ 0, 2 \pi ) $ such that $ \zeta_i( \theta ) = e^{\ii \theta^i} $. Pure-point  spectrum systems with arbitrarily many basic frequencies can be constructed from smooth, ergodic diffeomorphisms of any manifold supporting a periodic flow \cite{AnosovKatok70},  but these systems have discontinuous Koopman eigenfunctions. To our knowledge, no such diffeomorphism has been realized through a continuous-time flow.

Throughout this paper, the case with $ M \simeq \mathbb{ T }^m $  and $ l= m $ smooth Koopman eigenfunctions will be the canonical setting where we develop our dimension reduction and forecasting techniques for pure point spectrum systems, as well as for systems with more general spectral properties. Note that our smoothness assumption on Koopman eigenfunctions is no stronger than assuming that they are all continuous, for every eigenfunction $ z_k $ lies in the domain of $ \tilde v^n $ for every $ n \in \mathbb{ N } $, which implies that if it is continuous it has continuous derivatives at every order. Nevertheless, some of our methods, in particular, the nonparametric prediction scheme in Section~\ref{secForecasting} and the Galerkin method for Koopman eigenvalues and eigenfunctions in Section~\ref{secGalerkin}, are also applicable in systems with eigenfunctions of weaker, $ H_1 $, regularity.

\section{\label{secPurePointSpectra}Dimension reduction and forecasting in systems with pure point spectra}

\subsection{\label{secDimensionReduction}Intrinsic dimension reduction coordinates}

Our approach for systems with pure point spectra and smooth Koopman eigenfunctions is to construct a family of nonlinear projection maps $ \{ \pi_i \}_{i=1}^m $ of the phase space manifold $ M $ (or, equivalently, its diffeomorphic copy $F( M ) $ in data space) to the complex plane, using the generating eigenfunctions $ \{ \zeta_i \}_{i=1}^m $ from Definition~\ref{defnPurePoint} as dimension-reduction coordinates. Specifically, we set $ \pi_i : M \mapsto \mathbb{ C } $ with 
\begin{equation}
  \label{eqPiProj}
    \text{$\pi_i = \zeta_i$, $\mu$-a.e.}, \quad \text{and} \quad \pi_i(M) = S^1 .
\end{equation}
We also consider the composite map $ \pi : M \mapsto \mathbb{ C }^m $ with $ \pi( a ) = ( \pi_1( a ), \ldots, \pi_m( a ) ) $. Note that we can choose the image of $ M $ under $ \pi_i $ to lie in the unit circle by ergodicity. Moreover, since the eigenfrequencies $ \Omega_i $ are rationally independent, $ \pi(M ) = \mathbb{ T }^m $. It is straightforward to verify that   $ M \mapsto \pi_i(M) $ is a $ C^\infty $ manifold submersion of $ M $ onto the unit circle, and $ M \mapsto \pi(M) $ a diffeomorphism of the $m$-torus.

The projection maps in~\eqref{eqPiProj} follow the widely adopted paradigm of applied harmonic analysis and machine learning, which is to perform dimension reduction of data on nonlinear manifolds using eigenfunctions of linear operators on function spaces on these manifolds \cite{CoifmanLafon06,BelkinNiyogi03,VonLuxburgEtAl04,Singer06,BelkinNiyogi07,VonLuxburgEtAl08,BerryHarlim16,BerrySauer16,HeinEtAl05,JonesEtAl08,GiannakisMajda12a,BerryEtAl13,TalmonCoifman13,Portegies14,Giannakis15,BerrySauer16b,DsilvaEtAl16,YairEtAl16}. While these methods are typically based on eigenfunctions of diffusion operators or heat kernels, in this case we use eigenfunctions of a skew-adjoint operator intrinsic to the dynamical system generating the data, namely the generator of the Koopman group. We will formulate algorithms for approximating these eigenfunctions from data in Section~\ref{secGalerkin}. For now, we discuss the main features of dimension reduction with Koopman eigenfunctions, which follow from basic results of ergodic theory and maps of manifolds. In what follows, we use the notation $ \{ \theta^1, \ldots, \theta^m \} $ for general local coordinates defined in a neighborhood of $ a \in M $. Moreover, we denote the corresponding coordinate basis vectors of $ T_a M $ and their duals by $ \{ \frac{ \partial \; }{ \partial \theta^1 }, \ldots, \frac{ \partial \ }{ \partial \theta^m } \} $ and $ \{ d\theta^1, \ldots, d\theta^m \} $, respectively, where $ d\theta^i( \frac{ \partial \  }{ \partial \theta^j } ) = \delta_{ij} $. 

First,  the sets of eigenfrequencies and eigenfunctions of $ \tilde v $  do not depend on the observation map $ F $ and its associated Riemannian metric, but as stated in Remark~\ref{remEigDense}, these sets can be generated by non-unique choices of basic frequencies and their corresponding eigenfunctions. Nevertheless, given any two observation maps, it is possible to find generating eigenfunctions such that the corresponding projection maps from~\eqref{eqPiProj} are consistent.  This means that the $ \pi_i $ provide a unified space to parameterize data acquired from different sensors (different choices of $ F $), but generated by the same dynamical system. This property has several data analysis applications (which we do not study here), such as fusion and inference of data acquired from different sensors and from potentially distinct states in $ M $.

 In the setting of a single observation modality, our basic criterion used throughout the paper will be to choose the generating eigenfunctions with the least oscillatory behavior in the Riemannian metric $ g $ associated with the observation map. A natural measure to quantify this behavior for a function $ f $ is the Dirichlet energy, $ E_g(f) = \lVert \grad_g f \rVert^2 $, where $ \grad_g $ is the gradient operator associated with $ g $ and $ \lVert \cdot \rVert $ the $ L^2 $ norm for vector fields. We will provide more precise definitions for the gradient operator and related Hilbert space notions in Section~\ref{secGalerkinPrelim},  but for now it suffices to note that the set of Dirichlet energies $ \{ E_g( z_k ) \} $ of the eigenfunctions is discrete in $ \mathbb{ R } $, and therefore the set  of eigenfunctions can be stably ordered in order of increasing $ E_g(z_k) $. (Actually, in Section~\ref{secGalerkinImplementation} we will work with a conformally transformed metric, $h$, with stronger invariance properties under changes of observation modality than $ g $, but for now we work with $ g $ to illustrate ideas.) Intuitively, we expect that functions with small Dirichlet energy can be more accurately approximated from finite datasets than highly oscillatory functions, so our selection criterion for the generating eigenfunctions is to set $ \{ \zeta_i \}_{i=1}^m $ to the first $ m $ eigenfunctions in that ordering corresponding to rationally independent frequencies. We will discuss the numerical implementation of this approach in Section~\ref{secGalerkin}. In general, the generating eigenfunctions selected in this way will not always agree among different observation modalities, but our selection criterion exhibits some rigidity in the sense that there exist equivalence classes of observation modalities for which the selected generating eigenfunctions and generating frequencies are the same.    

Further useful properties of the Koopman eigenfunctions pertain to the dynamics in the image space from~\eqref{eqPiProj}. In particular, it follows directly from~\eqref{eqZFactor} that the projected dynamics under $ \pi_i $ are simple harmonic oscillations (rotations) with frequencies $ \Omega_i $. That is, the time evolution of the data in the image spaces $ \pi_i( M ) $ can be described by means of autonomous, integrable dynamical systems, avoiding closure issues---a non-trivial property which is not satisfied by general dimension reduction maps. Similarly, the dynamics under the composite map $ \pi $ is an ergodic rotation on the $m$-torus, i.e., it is quasiperiodic. If $ \zeta_i $ is $ C^\infty $, then that $ \pi_i $ is a factor map can be expressed as a projectibility property of $ v $:    

\begin{prop} The dynamical vector field $ v $ is projectible under $ \pi_i $ in the sense that for any $ a, b \in M $ such that $ \pi_i( a ) = \pi_i( b ) $, the equality $  \pi_{i*} v \rvert_{T_{\pi_i( a )} \mathbb{ C } } = \pi_{i*} v \rvert_{ T_{\pi_i( b )} \mathbb{ C }  }  $ holds, where $  \pi_{i*} : T  M \mapsto T \mathbb{ C } $ is the derivative map of $ \pi_i $, mapping tangent vectors on $ M $ to tangent vectors on $ \mathbb{ C } $. 
\end{prop}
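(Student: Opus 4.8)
The plan is to exploit the fact that $\pi_i = \zeta_i$ is an eigenfunction of the generator, so that by~\eqref{eqEigV} one has $v(\zeta_i) = \ii\Omega_i \zeta_i$, i.e.\ the directional derivative of $\zeta_i$ along the flow at a point $a$ depends on $a$ only through the value $\zeta_i(a)$ itself. This is precisely what projectibility demands, once one observes that the pushforward $\pi_{i*} v\rvert_a$ is determined by how $v$ differentiates the coordinate functions of $\mathbb{C}$ pulled back by $\pi_i$.

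First I would make precise the identification of tangent vectors on $\mathbb{C}$ with complex numbers. View $ \mathbb{ C } $ as a real $2$-manifold with global holomorphic coordinate $w$ and conjugate coordinate $\bar w$. For any $a \in M$, the pushforward $\pi_{i*} v\rvert_a \in T_{\pi_i(a)} \mathbb{C}$ acts on a smooth test function $\phi$ on $\mathbb{C}$ by $(\pi_{i*} v\rvert_a)(\phi) = v(\phi \circ \pi_i)(a)$. Under the standard isomorphism $T_p \mathbb{C} \cong \mathbb{C}$, the complex number representing a tangent vector $\xi$ is $\xi(w)$; and since $v$ is a \emph{real} vector field on $M$ while $\bar w \circ \pi_i = \zeta_i^*$, one has $(\pi_{i*} v\rvert_a)(\bar w) = v(\zeta_i^*)(a) = \overline{v(\zeta_i)(a)}$ (consistent with $\zeta_i^*$ being an eigenfunction with eigenvalue $-\ii\Omega_i$). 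Hence the tangent vector $\pi_{i*} v\rvert_a$ is completely determined by the single complex number $(\pi_{i*} v\rvert_a)(w) = v(w \circ \pi_i)(a) = v(\zeta_i)(a)$.

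The computation is then immediate: applying the eigenvalue equation~\eqref{eqEigV} to $z = \zeta_i$ with $\lambda = \ii\Omega_i$,
\begin{displaymath}
  (\pi_{i*} v\rvert_a)(w) = v(\zeta_i)(a) = \ii\Omega_i\, \zeta_i(a) = \ii\Omega_i\, \pi_i(a).
\end{displaymath}
Therefore, if $a, b \in M$ satisfy $\pi_i(a) = \pi_i(b)$, the tangent vectors $\pi_{i*} v\rvert_a$ and $\pi_{i*} v\rvert_b$ at the common point $\pi_i(a) = \pi_i(b)$ are represented by one and the same complex number $\ii\Omega_i \pi_i(a)$, and hence coincide. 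This establishes projectibility. As a byproduct, the pushed-forward vector field on $\pi_i(M)$ is $w \mapsto \ii\Omega_i w$, which — using $\lvert \zeta_i \rvert = 1$ from ergodicity, so that $\pi_i(M) \subseteq \mathbb{T}$ — is the infinitesimal generator of the rotation with frequency $\Omega_i$, exhibiting $\pi_i$ as a factor map onto that rotation, as claimed in the surrounding text.

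There is no serious obstacle here; the only point requiring a little care is the bookkeeping of the real-versus-complex structure, namely verifying that a real vector field's action on the single holomorphic coordinate function suffices to pin down the pushed-forward tangent vector (this is where reality of $v$, equivalently $v(\zeta_i^*) = \overline{v(\zeta_i)}$, is used). Everything else is a direct consequence of the eigenfunction identity $v(\zeta_i) = \ii\Omega_i\zeta_i$.
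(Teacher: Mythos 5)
Your proof is correct and follows essentially the same route as the paper's: both reduce projectibility to the eigenfunction identity $v(\zeta_i)=\ii\Omega_i\zeta_i$, which shows the pushforward at $a$ depends on $a$ only through $\pi_i(a)$. The only difference is cosmetic bookkeeping — you work with the single complex coordinate $w$ (using reality of $v$ to handle $\bar w$), while the paper computes the two real components $v(\Real\zeta_i)=-\Omega_i\Imag\zeta_i$ and $v(\Imag\zeta_i)=\Omega_i\Real\zeta_i$ in the basis $\{1,\ii\}$.
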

\begin{proof}
  Let $ \{ \hat e_1, \hat e_2 \} = \{ 1, \ii \} $ be the canonical basis of $ \mathbb{ C } $. This basis can be canonically identified with a basis of  $ T_{\pi_i(a)} \mathbb{ C } $ so that $ v_* = \pi_{i*} v \rvert_{\pi_i( a ) } = \sum_{j=1}^2 v_*^j \hat e_j $, where $ v_*^j = \sum_{k=1}^m v^k \frac{ \partial \chi^j }{ \partial \theta^k } = v( \chi^j ) $, with $ \chi^1 = \Real \zeta_i $ and $ \chi^2 = \Imag \zeta_i $. Because $ v( \zeta_i ) = \ii \Omega_i \zeta_i  $, we have  $ v_*^\nu \rvert_{T_{\pi_i( a )} \mathbb{ C } } = v_*^\nu \rvert_{T_{\pi_i( b )} \mathbb{ C }  } $ whenever $ \pi_i( a ) = \pi_i( b ) $. In particular, note that $ v( \chi^1 ) = - \Omega_i \chi^2 $ and $ v( \chi^2 ) = \Omega_i \chi^1 $.
\end{proof}

\subsection{\label{secVectorField}Vector field decomposition}

In this Section, we describe a decomposition of the dynamical vector field $ v $ into a sum of vector fields which are mutually commuting, and describe ``simpler'' (but non-ergodic) dynamics than $ v $ in that they have non-trivial nullspaces. We also discuss how these vector fields can be realized in data space through a spectral representation of the pushforward map, leading to spatiotemporal patterns that can be thought of as generalizations of Koopman modes \citep[][]{Mezic05}. Throughout this section, we assume that the generating Koopman eigenfunctions $ \{ \zeta_i \}_{i=1}^m $ are all smooth, so that $ M $ is diffeomorphic to the $ m $-torus, with the map $ \pi $ from Section~\ref{secDimensionReduction} providing a $ C^\infty $ diffeomorphism between $ M $ and $ \pi( M) = \mathbb{ T}^m $. Due to this relation, we can choose  $ \theta = ( \theta^1,\ldots, \theta^m ) $ to be canonical angle coordinates for $ \mathbb{ T }^m $ such that $ \zeta_i( \theta ) = e^{\ii \theta^i} $.  With this choice, the  $ \frac{ \partial\ }{ \partial \theta^i } $  and $ d\theta^i $ become the corresponding globally defined basis vector fields and dual vector fields, respectively. The following Theorem summarizes our vector field decomposition.  
    
\begin{thm}
  \label{lemmaVDecomp} Assume that the generating Koopman eigenfunctions $ \zeta_1, \ldots, \zeta_m $ are smooth, and let $ v_i $ be the smooth vector fields on $ M $ defined as $ v_i = \Omega_i \frac{ \partial\ }{ \partial \theta^i } $, where $ \Omega_i $ and $\theta^i $ are the eigenfrequency and canonical angle coordinate associated with $ \zeta_i $, respectively.   

     (i) Every Koopman eigenfunction  $ z_k = \prod_{i=1}^m \zeta_i^{k_i} $, $ k = ( k_1, \ldots, k_m ) \in \mathbb{ Z }^m $, is also an eigenfunction of $ v_i $ at eigenvalue $ \omega^{(i)}_k = \ii k_i \Omega_i $. As a result, $ v_i $ has the nullspace $ \ker v_i = \bigotimes_{j \neq i } \mathcal{Z}_j $, where $ \{ \mathcal{ Z }_j \}_{j=1}^m $ are the orthogonal subrings of $ L^2( M, \mu ) $ generated by $ \zeta_j $.

     (ii) The $ v_i $ are nowhere-vanishing, linearly independent, and have vanishing commutator, $ [ v_i, v_j ] = v_i v_j - v_j v_i = 0 $.
     
     (iii) The flows $ \Phi_{i,t} : M \mapsto M $, $ t \in \mathbb{ R } $, generated by $ v_i $ preserve the invariant measure $ \mu $ of the full dynamics.

     (iv) The $ \Phi_{j,t} $ act on the $ v_i $ by translations, in the sense that the tangent vector $ u \in T_b M $ with $ u = \Phi_{j,t*}(  v_i \rvert_a ) $ and $ b = \Phi_{j,t}( a ) $ acts on $ f \in  C^\infty( M)  $ according to $ u( f ) = v_i \rvert_b( f )$ for every $ a \in M $, where $ \Phi_{j,t*} : TM \mapsto TM $ is the derivative map associated with $ \Phi_{j,t} $.
  
      (v) The decomposition $ v = \sum_{i=1}^m v_i $ holds.
\end{thm} 
\begin{proof} 
  (i) By definition of the $ \theta^i $ coordinates, we have $ d \zeta_i = \ii \zeta_i \, d\theta^i $. Therefore,  $ v_i( \zeta_j ) = d\zeta_j( v_i )  = \ii \Omega_j \zeta_j \, d \theta^j (\frac{ \partial\ }{ \partial \theta^i }) = \ii \Omega_j \zeta_j \delta_{ji} $. This relation in conjunction with the Leibniz rule in~\eqref{eqLeibniz} proves the claim. 

(ii) The claim follows immediately from the fact that the $ \frac{ \partial\ }{ \partial \theta^i } $ are nowhere-vanishing, linearly independent, and mutually commuting vector fields.

(iii)  A necessary and sufficient condition that the $ \Phi_{i,t} $  are measure preserving is that  $ \divr_\mu v_i $ vanishes everywhere on $ M $. This follows from the definition of these vector fields and the fact that $ \divr_\mu \frac{ \partial\ }{ \partial \theta^i } = 0 $.

(iv) Since $ \{z_k \}  $ is a smooth, orthonormal basis of $ L^2(M,\mu) $, it  suffices to show that $ u( z_k )  = v_i\rvert_b( z_k) $. Indeed, according to~\eqref{eqZSHO}, 
  \begin{displaymath}
   u( z_k ) = ( \Phi_{j,t*}(  v_i \rvert_a ) )( z_k ) = v_i \rvert_a( z_k \circ \Phi_{j,t})  = e^{\ii \Omega_j t k_j } v_i \rvert_a  ( z_k ) = e^{\ii \Omega_j t k_j } \ii \Omega_i k_i z_k( a ) = \ii \Omega_i k_i z_k( b ) = v_i \rvert_b( z_k ).
  \end{displaymath}

(v) Since $ \{ \frac{ \partial\ }{ \partial \theta^i  } \rvert_a \}_{i=1}^m $ is a basis of $ T_a M $ at every $ a \in M $, there exist smooth functions $ C_1, \ldots, C_m $ such that $ v = \sum_{i=1}^m C_i v_i $. These functions are given by $ C_i = d\theta^i( v ) = d\zeta_i( v ) / ( i \zeta_i ) = v( \zeta_i ) / ( i \zeta_i ) = \Omega_i $, leading to the desired result.
\end{proof}

The vanishing commutator of the $ v_i $ in (ii) is an intrinsic (observation map independent) dynamical independence property; that is, in (v) the dynamical vector field is decomposed into independent components.  This decomposition   has connections with the nonlinear independent component analysis technique of Singer and Coifman \cite{SingerCoifman08}, which recovers independent components of stochastic differential equations on manifolds using kernel methods. Globally on $ M $, the $ v_i $ generate measure-preserving, but non-ergodic, transformations $ \Phi_{i,t} $, giving the full evolution map through the composition $ \Phi_t = \Phi_{1,t} \circ \cdots \circ \Phi_{m,t} $, where the order of the components does not matter. As is the case with $ \Phi_t $, the projection maps $ \pi_i $ in~\eqref{eqPiProj} are factor maps mapping the dynamics on $ M $ associated with $ \Phi_{i,t} $ to the corresponding circle rotation $ \Xi_{i,t} $ with frequency $ \Omega_i $ as in~\eqref{eqZFactor}; i.e., $ \pi_i \circ \Phi_{i,t} = \Xi_{i,t} \circ \pi_i $. Moreover, associated with $ \Phi_{i,t} $ are are $C^0 $ unitary Koopman groups $ U_{i,t} : L^2( M, \mu ) \mapsto L^2( M, \mu ) $ generated by skew-adjoint operators $ \tilde v_i : D( \tilde v_i ) \mapsto L^2(M, \mu) $ that extend $ v_i $. Note that the domains $ D( \tilde v_1 ), \ldots, D( \tilde v_m ) $ and $ D( \tilde v ) $ are in general different. As with $ \tilde v $, the generators $ \tilde v_i $ are transformed into skew-adjoint multiplication operators $ T_{\ii \omega^{(i)} } =  \mathcal{ U } \tilde v_i \mathcal{ U }^{-1}  $ by the Fourier operator $ \mathcal{ U } $ introduced in Section~\ref{secErgodicity}, where $ T_{\ii \omega^{(i)}} $ denotes multiplication by $ \ii \omega^{(i)} = ( \ii \omega_k^{(i)} )_k $.      

The vector fields $ v_i $ are intrinsically defined as differential operators on $ M $,  and to reconstruct them in data space $ \mathbb{ R }^d $ we apply the pushforward map for tangent vectors introduced in Section~\ref{secErgodicity}. 
\begin{prop} \label{propPushforward} Let $ u $ be a smooth vector field on $ M $, and $ F $ a smooth embedding of $ M $ into $ \mathbb{ R }^d $. Then, the image $ u_* = F_*( u ) $ under the pushforward map $ F_* : T M \mapsto T \mathbb{ R }^d $ is given by $ u_* = u( F ) $, where $ u $ acts on $ F $ componentwise in a basis of $ \mathbb{ R }^d $.   
\end{prop}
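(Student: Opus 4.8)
The plan is to reduce the claim to the defining property of the pushforward acting on functions, combined with the canonical identification $T\mathbb{R}^d \cong \mathbb{R}^d$ that is implicit in the statement. Recall that for any smooth map $F : M \mapsto \mathbb{R}^d$ and any tangent vector $u\rvert_a \in T_a M$, the pushforward $F_*\rvert_a$ produces the vector $F_* u\rvert_a \in T_{F(a)} \mathbb{R}^d$ characterized by $(F_* u\rvert_a)(g) = u\rvert_a(g \circ F)$ for every function $g$ smooth in a neighborhood of $F(a)$. This is just the chain rule packaged as an operator identity, and it holds pointwise in $a$, hence defines the vector field $u_* = F_*(u)$ along $F(M)$.

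Next I would introduce the Cartesian coordinate functions $y^1, \ldots, y^d$ on $\mathbb{R}^d$, whose coordinate vector fields $\partial/\partial y^1, \ldots, \partial/\partial y^d$ furnish at each point exactly the canonical basis of $\mathbb{R}^d$ under the standard identification $T_p\mathbb{R}^d \cong \mathbb{R}^d$; writing $F = (F^1, \ldots, F^d)$ with $F^\alpha = y^\alpha \circ F$ the $\alpha$th component of the embedding. Any $w \in T_p\mathbb{R}^d$ has components $w^\alpha = w(y^\alpha)$ in this basis. Applying this to $w = F_* u\rvert_a$ and using the characterization above gives $(F_* u)^\alpha\rvert_a = (F_* u\rvert_a)(y^\alpha) = u\rvert_a(y^\alpha \circ F) = u\rvert_a(F^\alpha)$. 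Assembling the $d$ components, $u_* = (u(F^1), \ldots, u(F^d)) = u(F)$, which is the asserted formula, with $u(F)$ smooth because $u$ and $F$ are.

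There is essentially no obstacle here: the statement is a coordinate-free restatement of the chain rule, so the only point requiring care is to make explicit that the $T\mathbb{R}^d \cong \mathbb{R}^d$ identification used on the left-hand side is the one via Cartesian coordinates, as was already fixed in Section~\ref{secErgodicity}. I would also remark that the embedding hypothesis on $F$ is not needed for this particular identity — mere smoothness of $F$ suffices — but it is retained because injectivity of $F_*$ and the submanifold structure of $F(M)$ are used when this proposition is applied to reconstruct the vector fields $v_i$ of Theorem~\ref{lemmaVDecomp} in data space.
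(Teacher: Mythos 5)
Your proof is correct and follows essentially the same route as the paper's: both compute the components of $F_* u$ in the canonical basis of $\mathbb{R}^d$ via the chain rule, yielding $u_*^\nu = u(F^\nu)$; you phrase it through the action of $F_* u$ on the Cartesian coordinate functions $y^\nu$, while the paper writes the same computation in local coordinates $\{\theta^\mu\}$ on $M$ using the transformation law, a purely cosmetic difference. Your side remark that the embedding hypothesis is not needed for this identity (only for its later use in reconstructing the $v_i$) is also accurate.
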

 \begin{proof}
   Let $ \{ e_1, \dots, e_d \} $ be a basis of $ \mathbb{ R }^d $. This basis induces a basis of the tangent space $ T_x \mathbb{ R }^d $ at every $ x \in \mathbb{ R }^d $ through the canonical isomorphism $ T_x \mathbb{ R }^d \simeq \mathbb{ R }^d $, and we have $ u_* = \sum_{j=1}^d u_*^j e_j $. Denoting the components of $ u $ in local coordinates $ \{ \theta^1, \ldots, \theta^m \} $ on $ M $ by $ \{ u^1, \ldots, u^m \} $, for  $ f \in C^\infty(M) $ we have $ u( f ) = \sum_{i=1}^m u^i \frac{ \partial f }{ \partial \theta^i } $. In particular, expanding the observation map as $ F = \sum_{j=1}^d F^j e_j $, and using the transformation law for tangent vectors, we obtain $ u^j_* = \sum_{i=1}^m \frac{ \partial F^j }{ \partial \theta^i } u^i = u( F^j ) $ and therefore $ u_* = u( F ) $.
\end{proof}
Using Proposition~\ref{propPushforward} and the fact that $ v_i( F ) = \tilde v_i( F ) = \mathcal{ U }^{-1} T_{\ii \omega^{(i)} }  \hat F $,  where $ \hat F $ is the Fourier representation of the observation map from~\eqref{eqFZDecomp}, we can compute the reconstructed vector fields $ V_i = F_* v_i $ in $ \mathbb{R}^d $ through the expression 
\begin{equation}
  \label{eqVPushZeta}
  V_i =  \tilde v_i( F ) = \sum_k \ii k_i \Omega_i \hat F_k z_k,
\end{equation}
which holds $ \mu $-a.e. Moreover, it follows from Theorem~\ref{lemmaVDecomp}(iv) that $ V = F_* v = \sum_{i=1}^m V_i $. Note that $ V $ can also be approximated from finite differences of time-ordered data as described in Section~\ref{secErgodicity} and \cite{Giannakis15}, but the components $ V_i $ in general cannot. In applications, the $ V_i $ can be visualized as spatiotemporal patterns (movies), or if $ d $ is sufficiently small, as arrow plots on the data manifold $ F( M ) $. In particular, because $ F_* $ is the pushforward map for vector fields, the vector $ V_i \rvert_a = \sum_k \ii k_i \Omega_i \hat F_k z_k( a ) $ is guaranteed (modulo numerical errors) to be tangent to $ F( M ) $ at the point $ F( a ) $;  see Figs.~\ref{figIrrationalV},  \ref{figTorusV}, \ref{figMixing3TorusV}, and~\ref{figFixedPointTorusV} for examples. The following is a direct consequence of Proposition~\ref{propPushforward} and the definition of the $ V_i $ through~\eqref{eqVPushZeta}. 

\begin{cor} \label{corTransf}The reconstructed vector fields $ V_i $ transform naturally as type $ ( 1, 0 ) $ tensors. That is, given a map $ G : \mathbb{ R }^d \mapsto \mathbb{ R }^{\tilde d } $ such that $ \tilde F = G \circ F $ is an embedding of $ M $ into $ \mathbb{ R }^{\tilde d} $, the reconstructed vector fields $ \tilde V_i = \tilde F_* v_i $ in $ \mathbb{ R }^{\tilde d} $ are related to the $ V_i $ via $ \tilde V_i = G_* V_i $. 
\end{cor} 

Note that the individual Koopman modes $ \hat F_k $ do not obey an analogous transformation law. That is, if $ \hat{\tilde F}_k = \langle z_k, \tilde F \rangle = \langle z_k, G \circ F \rangle $, then in general $ \hat{\tilde F}_k $ is not equal to $ G( \hat F_k )  $.
 
\subsection{\label{secForecasting}Nonparametric forecasting}

In systems with pure point spectra, the facts that Koopman eigenfunctions form a complete basis of $ L^2( M, \mu ) $ in which Koopman operators are represented by multiplication operators with an available closed-form expression for the multiplication function, $ e^{\ii \omega t } $, makes these eigenfunctions well suited for data-driven nonparametric forecasting. In this section, we formulate a technique for forecasting probability densities and expectation values of observables with initial data specified as a probability measure. Our approach follows closely the nonparametric framework developed in \cite{BerryEtAl15}, with the difference that here we use the additional structure in the temporal evolution of the eigenfunctions. 

Consider the initial data given as a probability measure $ \mu_0 $ on $ ( M, \mathcal{ B } ) $ with a smooth probability density $ \rho_0 $ relative to the invariant measure $ \mu $. The measure $ \mu_0 $ evolves at time $ t $ according to $ \mu_t = \Phi_{t*} \mu_0 $, and the corresponding density is given through the Perron-Frobenius operator by $ \rho_t = U^*_t \rho_0 $. In the Fourier representation introduced in Section~\ref{secErgodicity}, we have $ \rho_t = \mathcal{ U }^{-1} \hat \rho_t $ for all $ t \in \mathbb{ R }$, with $ \rho_t \in \ell^2 $ given by $ \hat \rho_t = T_{e^{-\ii \omega t} } \hat \rho_0 $. Explicitly, the forecast density $ \rho_t $ expanded in the Koopman eigenfunction basis becomes
\begin{equation}
  \label{eqL2Rho} \rho_t = \sum_k \hat \rho_k( t ) z_k, \quad \text{with} \quad \hat \rho_k( t ) = e^{-\ii \omega_k t } \hat \rho_k( 0 ), \quad \hat \rho_t = ( \hat \rho_k( t ) )_k.
\end{equation}

Next, to compute the expectation value $ \bar f_t = \mathbb{E}_{\mu_t} f $ of an observable $ f \in L^2( M, \mu ) $ with respect to $ \mu_t $, we make use of the fact that 
\begin{equation}
  \label{eqMeanForecast0}
  \mathbb{E}_{\mu_t} f = \int_M f \rho_t \, d\mu = \langle \rho_t, f \rangle = \langle \hat \rho_t, \hat f \rangle_{\ell^2}, \quad  \hat f = \mathcal{ U } f = ( \hat f_k )_k,
\end{equation}
where the last equality in the expression for $ \mathbb{E}_{\mu_t} f $ is a consequence of the unitarity of $ \mathcal{ U } $. We thus obtain
\begin{equation}
  \label{eqMeanForecast}
  \bar f_t =  \sum_k e^{\ii\omega_k t } \hat \rho^*_k( 0 ) \hat f_k.
\end{equation}
Assuming, further, that $ f \in L^4( M,\mu) $, we can evaluate the mean square forecast $ \mathbb{ E }_{\mu_t} f^2 $ using a similar approach (or the convolution identity mentioned in Section~\ref{secErgodicity}), leading to the variance forecast $ \sigma^2_t = \mathbb{ E }_{\mu_t} f^2 - \bar f_t^2 $ which is useful for uncertainty quantification. 

\begin{rk} 
  Forecasting with  Koopman eigenfunctions can also be performed with initial data given as a single observation $ y $ in the ambient data space $ \mathbb{ R }^d $. In this case, we first compute values $ \{ \hat \zeta_i \}_{i=1}^m  $ for the $ m $ generating eigenfunctions at the point $ a \in M $ with $ F( a ) = y $ using out-of-sample extension techniques for functions (e.g., \cite[][]{CoifmanLafon06b,RabinCoifman12}), and then evolve the initial values $ \{ \hat \zeta_i \} $  via~\eqref{eqZSHO}. We then determine the values of other eigenfunctions using their group structure, and reconstruct the observable through its expansion coefficients and the eigenfunction values at the desired lead time. Note that if $ F( M ) $ is the data manifold from an imperfect model with model error, then $ y $ may not lie on $ F( M ) $, but extended function values  can also be computed in this case. This approach is closely related to a kernel analog forecasting framework developed in \cite{ZhaoGiannakis16}. In numerical experiments not reported here, we have observed comparable skill with this method and the results of Section~\ref{secTorus}.       
\end{rk}

\subsection{\label{secIrrational}Irrational flow on the 2-torus}

We demonstrate the techniques presented in Sections~\ref{secDimensionReduction}--\ref{secForecasting} in an analytically solvable example involving an irrational flow on the 2-torus. Denoting the azimuthal and polar angles on the 2-torus by $ ( \theta^1, \theta^2 ) $, respectively, we consider the  dynamical vector field on $ M = \mathbb{ T }^2 $ given by 
\begin{equation}
  \label{eqTorusIrrational}
  v = \sum_{i=1}^2 v^{i} \frac{ \partial \ }{ \partial \theta^{i} }, \quad \text{with} \quad v^1 = 1, \quad   v^2  = \alpha, 
\end{equation}
where $ \alpha $ is a positive angular frequency parameter which is set to an irrational number to produce an ergodic flow. This dynamical system is observed through the observation map $ F : M \mapsto \mathbb{ R }^3 $ corresponding to the standard embedding of the 2-torus into three-dimensional Euclidean space, i.e., for the point $ a \in M $ with coordinates $ ( \theta^1, \theta^2 ) $, we have $ F( a ) = ( F^1( a ), F^2( a ), F^3( a ) ) = ( x^1, x^2, x^3 ) $, where 
\begin{equation}
  \label{eqTorusEmbedding}
  x^1 = ( 1 + R \cos \theta^2 ) \cos \theta^1, \quad x^2 = ( 1 + R \cos \theta^2 ) \sin\theta^1, \quad x^3 = \sin \theta^2, \quad R \in ( 0,  1 ).
\end{equation}
The volume form $ d\mu = d \theta^1 \wedge d \theta^2 / (2 \pi )^2 $ associated with the invariant measure of this system has uniform density relative to the Haar measure.  Moreover, the eigenvalue problem for $ \tilde v $ in~\eqref{eqEigV} has solutions $ \lambda_k = \ii( k_1 + k_2 \alpha ) $ and $ z_k( \theta^1, \theta^2 ) = e^{\ii( k_1 \theta^1 + k_2 \theta^2 )} $, with $ k = ( k_1, k_2 ) $, $ k_i \in \mathbb{ Z } $. 

To select generating eigenfunctions $ \{ \zeta_1, \zeta_2 \} $ with low roughness on the torus and their corresponding basic frequencies $ \{ \Omega_1, \Omega_2 \} $, we compute the Dirichlet energy of the eigenfunctions in the induced Riemannian geometry from the observation map as described in Section~\ref{secDimensionReduction}. For the embedding in~\eqref{eqTorusEmbedding}, the induced Riemannian metric has components $ g_{11} = ( 1 + R \cos\theta^2 )^2 $, $ g_{22} = R^2 $, and $ g_{12} = g_{21} = 0 $ in the $ \{ \theta^{i} \} $ coordinates, leading to the Dirichlet energy values $ E_g(z_k) = C_1 k_1^2 + C_2 k_2^2 $, where $ C_1 = 1 / ( 1 - R^2 )^{3/2} $ and $ C_2 = \alpha^2/ R^2 $. Thus, in this geometry, the least-rough eigenfunctions corresponding to rationally independent frequencies are those with $ k= ( 0, 1 ) $ and $ k = ( 0, 2 ) $, i.e., we have $ \Omega_1 = 1 $, $ \Omega_2 = \alpha $, $ \zeta_1( \theta^1, \theta^2 ) = e^{\ii\theta^1} $, and $ \zeta_2( \theta^1, \theta^2 ) = e^{\ii\theta^2} $. Clearly, the image of the torus under each of the projection maps from~\eqref{eqPiProj} is the unit circle, $ \pi_i( a ) = e^{\ii \Omega_i \theta^i } $, and the system evolves in these coordinates as a simple harmonic oscillator in accordance with~\eqref{eqZSHO}. 

\begin{rk}[Highly oscillatory eigenfunctions and slow observables] \label{remOscillatory} In this torus rotation, for every eigenfunction $ z_k $ corresponding to the eigenvalue $ \lambda_k $ there exist eigenfunctions with eigenvalues arbitrarily close to $ \lambda_k $ and with arbitrarily large Dirichlet energy. This is a consequence of the density of the spectrum of $\tilde v $ on the real line (see Remark~\ref{remEigDense}). In particular, the eigenvalue and Dirichlet energy of the constant eigenfunction are both zero, but for any $ \epsilon, \bar E > 0 $ one can find integers $ i $ and $ j $ such that for $ k = ( i, j ) $, $ \lvert \lambda_k \rvert < \epsilon $ and $ E_g({z_k}) > \bar E $. In other words, there exist observables $ z_k $ with arbitrarily small frequency $ \lvert \lambda_k \lvert $ but arbitrarily large roughness $ E_g( z_k ) $. This behavior is generic in systems possessing two or more rationally independent Koopman eigenvalues, and can adversely affect the conditioning of numerical schemes for Koopman eigenvalues and eigenfunctions. More generally, the identification of slow observables is an important task in reduced dynamical modeling (e.g., \cite{KevrekidisEtAl04,FroylandEtAl14,DsilvaEtAl16}), and the simple example discussed here suggests that care may be needed to ensure that the identified slow observables are also smooth. In Section~\ref{secGalerkin}, we will suppress the pathological Koopman eigenfunctions with large Dirichlet energy by adding a small amount of diffusion to $ v $.   
\end{rk}
 
Next, we reconstruct the vector fields $ v_i $ from Theorem~\ref{lemmaVDecomp} associated with the generating eigenfunctions $ \{ \zeta_1, \zeta_2\} $ using the spectral expansion of $ F  $ in the $ \{ z_k \} $ basis in accordance with~\eqref{eqVPushZeta}. Setting $ k = ( i, j ) \in \mathbb{ Z }^2 $, the expansion coefficients are
\begin{align*}
  \hat F^1_{ij} &= \langle z_{ij}, F^1 \rangle = \frac{ 1 }{ 2 } ( \delta_{i1} \delta_{j0} - \delta_{i,-1} \delta_{j0} ) + \frac{ R }{ 4 } ( \delta_{i1} \delta_{j1} + \delta_{i,-1} \delta_{j1} + \delta_{i1} \delta_{j,-1} + \delta_{i,-1} \delta_{j1} ), \\
  \hat F^2_{ij} &= \langle z_{ij}, F^2 \rangle = \frac{ 1 }{ 2 \ii }( \delta_{i0} \delta_{j1} - \delta_{i0 } \delta_{j,-1} )  + \frac{ R }{ 4 \ii }( \delta_{i1} \delta_{j1} - \delta_{i,-1} \delta_{j1} + \delta_{i1} \delta_{j,-1} - \delta_{i,-1} \delta_{j,-1} ), \\
  \hat F^3_{ij} &= \langle z_{ij}, F^3 \rangle = \frac{ R }{ 2\ii } ( \delta_{i0} \delta_{j1} - \delta_{i0} \delta_{j,-1} ),
\end{align*}
giving $ V_i = ( V^1_i, V^2_i, V^3_i ) $ with
\begin{displaymath}
  V_1^1  = - \sin \theta^1 - \frac{ R }{ 2 } ( \sin(\theta^1+\theta^2 ) + \sin( \theta^1 - \theta^2 ) ), \quad
  V_2^1 = \cos \theta^1 + \frac{ R }{ 2 } ( \cos( \theta^1 + \theta^2 ) + \cos( \theta^1 - \theta^2 ) ), \quad
  V_3^1 = 0,
\end{displaymath}
and
\begin{displaymath}
  V_2^1 = - \frac{ R }{ 2 } ( \sin( \theta^1 + \theta^2 ) + \sin( \theta^1 - \theta^2 ) ), \quad V^2_2 = \frac{ R }{ 2 }( \cos( \theta^1 + \theta^2 ) - \cos( \theta^1 - \theta^2 ) ), \quad V^3_2 = R \cos \theta^2.
\end{displaymath}
This decomposition is depicted in Fig.~\ref{figIrrationalV}.

\begin{figure}
  \centering\includegraphics{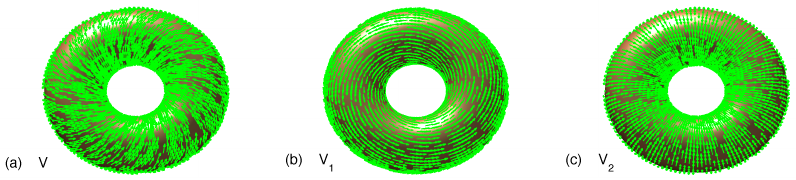}
  \caption{\label{figIrrationalV}Decomposition of the vector field of an irrational flow on the torus into mutually commuting components. (a) Full vector field $v$; (b,c) the components $v_i = \Omega_i \frac{ \partial\ }{ \partial \theta^i} $ from Theorem~\ref{lemmaVDecomp}, reconstructed in $ \mathbb{ R }^3 $ using the pushforward map from~\eqref{eqVPushZeta}.}
\end{figure}
  
Consider now statistical forecasting of the irrational-flow system using the nonparametric approach of Section~\ref{secForecasting}. In this example, we set the initial probability measure $ \mu_0 $ to a von Mises (circular Gaussian) distribution on the torus with the density function
\begin{equation}
  \label{eqVonMises}
  \rho_0( \theta^1, \theta^2 ) = e^{\kappa(\cos(\theta^1-\bar \theta^1) + \cos( \theta^2 - \bar \theta^2 ) ) } /  ( I_0( \kappa ) )^2
\end{equation}
relative to the invariant measure. In~\eqref{eqVonMises},  $ I_n $ is the modified Bessel function of order $ n $, and we use the values $ ( \bar \theta^1, \bar \theta^2 ) = ( 0, 0 ) $ and $ \kappa = 30 $ for the location and concentration parameters, respectively. We take the component $ F^1 $ of the observation map as the forecast observable $ f $, and compute the time-dependent expectation value and standard deviation of $ f $ using~\eqref{eqMeanForecast}. The latter equation can be evaluated analytically using properties of Bessel functions and the expansion of $ F^1 $ in the eigenfunction basis. In particular, using the result $ \int_0^{2\pi} e^{\ii n \theta + \kappa \cos \theta } \, d \theta = 2 \pi I_{\lvert n \rvert }( \kappa ) / I_0( \kappa ) $, we find
\begin{align*}
  \bar f_t &= \frac{ I_1( \kappa ) }{ I_0( \kappa ) } \cos t + \frac{ R I_1^2( \kappa ) }{ I_0^2( \kappa ) } ( \cos( ( 1 + \alpha ) t ) + \cos( ( 1 - \alpha ) t ) ),  \\
  \overline{ f^2 }_t &= \left( 1 + \frac{ I_2( \kappa ) }{ I_0( \kappa ) } \cos( 2 t ) \right) \left[ \frac{ 1 }{ 2 } + \frac{ R I_1( \kappa ) }{ I_0( \kappa ) } \cos( \alpha t ) + \frac{ R^2 }{ 4 } \left( 1 + \frac{ I_2( \kappa ) }{ I_0( \kappa ) } \cos( 2 \alpha t ) \right) \right].
\end{align*}
 The time evolution of $ \bar f_t $ and the standard deviation $ \sigma_t = ( \overline{f^2}_t - \bar f_t^2 )^{1/2} $ are shown in Fig~\ref{figIrrationalForecast}.

\begin{figure}
  \centering\includegraphics[scale=.95]{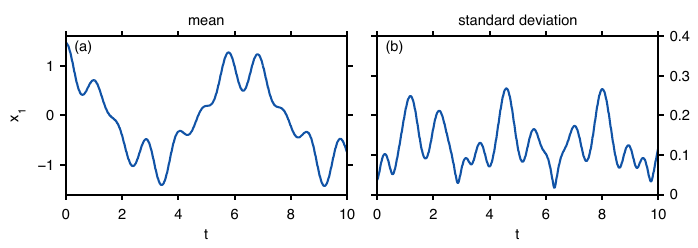}
  \caption{\label{figIrrationalForecast}Statistical forecast of the component $ x^1 $ of the torus embedding in~$ \mathbb{ R }^3 $ for the irrational flow  with frequencies $ (1, 30^{1/2} ) $. The initial probability measure has the circular Gaussian density from~\eqref{eqVonMises} relative to the Haar measure with location and concentration parameters $ ( 0, 0 ) $ and 30, respectively. (a) Mean forecast; (b) standard deviation.} 
\end{figure}

\section{\label{secGalerkinImplementation}Galerkin approximation in a data-driven orthonormal basis}

In this section, we present a Galerkin method with regularization for the eigenvalue problem of the generator in an orthonormal basis acquired through the diffusion maps algorithm \citep[][]{CoifmanLafon06}. We also discuss the spectral properties of the regularized generator, including its asymptotic behavior in the weak-diffusion limit---this discussion will also motivate the time-change techniques of Section~\ref{secTimeChange}. We then describe the implementation of the techniques of Section~\ref{secPurePointSpectra} in the eigenfunction basis of the regularized generator, and present numerical applications to a variable-speed ergodic flow on the 2-torus.
    
\subsection{\label{secGalerkinPrelim}Choice of Galerkin approximation space}

Let $ g $ be the Riemannian metric on $ M $ inherited from the observation map, and $ \sigma $ the $ C^\infty $ density of the invariant measure of the dynamics relative to the Riemannian measure of $ g $; i.e., $ \sigma = d \mu/ \dvol_g $, where $ \sigma $ is bounded away from zero by compactness of $ M $. For data generated by ergodic dynamical systems, $ \sigma $ is also  the sampling density relative to the Riemannian measure, though the samples collected from a single time series (as is the case in~\eqref{eqDataset}) are not independent. In general, $ \sigma $ will be a nonconstant function, and in what follows we work with the conformally transformed metric $ h = g \sigma^{2/m} $. In particular, we will use Laplace-Beltrami eigenfunctions associated with $ h$ as a basis of our approximation space for the Koopman generator.    Note that $ h $ contracts (expands) local distances with respect to the original metric $ g $ in regions of small (high) sampling density $ \sigma $. As we will see below, due to this property the Laplace-Beltrami eigenfunctions associated with $ h $ acquire increased ``resolution'' in high-$ \sigma $ regions, i.e., in regions where high resolution can be robustly attained using finite datasets.  In Section~\ref{secDataDrivenBasis}, we will discuss how to approximate the Laplace-Beltrami eigenfunctions associated with $ h$  via diffusion maps.
\begin{lemma}
  Let $ F : M \mapsto \mathbb{ R }^d $ and $ \tilde F : M \mapsto \mathbb{ R }^{\tilde d} $ be smooth embeddings of $ M $ with the corresponding induced Riemannian metrics $ g $ and $ \tilde g $, respectively. Assume that $ g $ and $ \tilde g $ are conformally equivalent, i.e., that there exists a positive function $ r \in C^\infty( M ) $ such that $ \tilde g = r g $ and $ 1/ r \in C^\infty(M) $. Then,  $ g \sigma^{2/m} = \tilde g \tilde \sigma^{2/m} $, where $ \sigma = d\mu/ \dvol_g $ and $ \tilde \sigma = d\mu/\dvol_{\tilde g} $ are the sampling densities associated with $ g $ and $ \tilde g $, respectively. 
\end{lemma}
\begin{proof}
  The claim follows immediately from the fact that $ \tilde \sigma = d\mu / \dvol_{\tilde g} = r^{-m/2} d\mu/ \dvol_g = r^{-m/2} \sigma. $ 
\end{proof}
\begin{cor}
  The metric $ h $ is unique for each equivalence class of observation maps associated with conformally equivalent induced metrics. Moreover, $ h $ has uniform volume form relative to the invariant measure of the dynamics since $ d\mu/ \dvol_h = (1/\sigma) \,  d\mu / \dvol_g =1 $. 
\end{cor}

Consider now the vector space of smooth vector fields on  $ M $,  equipped with the Hodge inner product $\langle u_1, u_2 \rangle = \int_M h( u_1, u_2 ) \dvol_h = \int_M h( u_1, u_2 ) \, d \mu $ associated with $ h $ and the norm $ \lVert u \rVert = \langle u, u \rangle^{1/2} $. We denote the gradient of a function $ f  \in C^\infty( M ) $ with respect to $ h $ by $ \grad_h f = h^{-1}( df, \cdot ) $, where $ h^{-1} $ is the ``inverse metric''. Note that $ \grad_h f = \sigma^{-2/m} \grad_g f $, and because $ d\mu/ \dvol_h  = 1 $,   $ \langle u, \grad_h f \rangle = - \langle \divr_\mu u, f \rangle $ for any smooth function $ f $ and vector field $ u$. In local coordinates, we have  $ \dvol_h = \sqrt{ \det h } \, d\theta^1 \wedge \cdots \wedge d\theta^m $, $ \grad_h = \sum_{i=1}^m ( \grad_h)^i \frac{ \partial\ }{ \partial \theta^i } $, and
\begin{equation}
  \label{eqGradDiv}
  ( \grad_h f )^i = \sum_{j=1}^m h^{-1, ij} \frac{ \partial f }{ \partial \theta^j }, \quad  \divr_\mu u  = \frac{ 1 }{ \sqrt{ \det h } } \sum_{i=1}^m \frac{ \partial \  }{ \partial \theta^i }\left(  \sqrt{ \det h }  u^i \right).
\end{equation}
We also introduce the order-1 Sobolev space $ H_1( M, h, \mu ) $ associated with $ h $ and $ \mu $  (henceforth abbreviated as $ H_1( M, h ) $ since $ \mu = \vol_h $), which is equipped with the inner product $ \langle f_1, f_2 \rangle_{H_1} = \langle f_1, f_2 \rangle + \langle \grad_h f_1,  \grad_h f_2 \rangle $ and the induced norm $ \lVert f \rVert_{H_1} = \langle f, f \rangle_{H_1}^{1/2} $. 

The Dirichlet energy of functions in $ H_1( M, h ) $ is given by the functional 
\begin{equation}
  \label{eqDirichlet}
  E_h(f) = \langle \grad_h f, \grad_h f \rangle = \int_M \lVert \grad_h f \rVert_h^2 \, d\mu = \int_M  \lVert \grad_g f \rVert_g^2 \sigma^{-2/m} \, d\mu,
\end{equation}
where $ \lVert \grad_h f \rVert_h^2 = h( \grad_h f, \grad_h f ) $ and $ \lVert \grad_g f \rVert_g^2 = g( \grad_g f, \grad_g f ) $. This functional provides a metric-dependent measure of roughness of functions which can be used to select generators for the spectrum of the Koopman group as described in Section~\ref{secDimensionReduction}. Note that due to the presence of the $ \sigma^{-2/m} $ term in the last integral in~\eqref{eqDirichlet}, functions with large gradient with respect to the ambient space metric $ g $ in regions of small sampling density will generally acquire large Dirichlet energy with respect to $ h $.       
  
Next, consider the Laplace-Beltrami operator $ \upDelta_h = - \divr_\mu \grad_h $ associated with the Riemannian metric $ h $, and a corresponding orthonormal basis $ \{ \phi_i \}_{i=0}^\infty $ of $L^2(M,\mu) $ consisting of  eigenfunctions
\begin{equation}
  \label{eqLapl}
  \upDelta_h \phi_i = \eta_i \phi_i, \quad \phi_i \in C^\infty( M ), \quad 0 = \eta_0 < \eta_1 \leq \eta_2 \leq \cdots \nearrow \infty, 
\end{equation}
corresponding to the eigenvalues $ \{ \eta_i \}_{i=0}^\infty $. Note that while the inner product of $ L^2( M, \mu ) $ is metric-tensor-independent, the gradient operator, and hence $ \upDelta_h $, $ \phi_i $, and $ \eta_i $, all depend on $ h $ (which depends in turn on the observation map $ F $). It is a standard result that the eigenvalues are extrema of the Rayleigh quotient $ R_h( f ) = E_h( f ) / \lVert f \rVert^2 $, and the corresponding eigenfunctions are the extremizers; i.e., $ \eta_i = R_h( \phi_i ) $,  and $R_h( \phi_i ) = E_h( \phi_i ) $ for normalized eigenfunctions. Since the integrals in the evaluation of $ R_h $  are with respect to the invariant measure of the dynamics, we interpret $ R_h $ as a measure of ``expected roughness'' with respect to the Riemannian metric $ h $. In particular,  due to the $ \sigma $-dependent term in~\eqref{eqDirichlet}, the leading extrema of $ R_h $ will generally correspond to eigenfunctions with weaker oscillatory behavior (as measured with respect to the ambient space metric) in regions of small sampling density and stronger oscillatory behavior (thus, higher resolution) in regions of large sampling density. Thus, we can interpret the finite collection $ \{ \phi_0, \phi_1, \ldots, \phi_{l-1} \} $ as the $ l $-element orthonormal set on $ L^2(M,\mu) $ with the least expected roughness for the equivalence class of observation maps associated with $ h$, in the sense that $ \sum_{i=0}^{l-1} R_h( \phi_i ) \leq \sum_{i=0}^{l-1} R_h( f_i ) $, where $ \{ f_0, f_1, \ldots, f_{l-1} \} $ is any $ l $-element orthonormal set on $ L^2( M, \mu ) $. 

Intuitively, functions with small expected roughness can be robustly approximated from finite datasets in $ M $, and in the case of i.i.d.\ samples this intuition can be rigorously verified through pointwise and spectral convergence results established for graph Laplacians \cite{VonLuxburgEtAl04,CoifmanLafon06,Singer06,BelkinNiyogi07,VonLuxburgEtAl08,BerryHarlim16,BerrySauer16}. In particular, it has recently been shown \cite{BerrySauer16} that a spectrum of the Laplacian-Beltrami operator for a Riemannian metric analogous to $ h $ can be consistently estimated by normalized graph Laplacians for a variable-bandwidth kernel of the same class as~\eqref{eqKVB}. In \cite{BerrySauer16}, they also show that for the class of conformally invariant metrics $ h $ the variance of the approximated eigenvalues has the leading-order behavior $ \var \eta_i = \frac{ \epsilon^{-(m/2+1)} }{N( N- 1)} C \eta_i \langle \phi_i^2, \phi_i^2 \rangle / \langle \phi_i, \phi_i \rangle^2 $, where  $ C $ is a constant independent of $ \eta_i $, $ \phi_i $, and, importantly, the sampling density $ \sigma $. In other words, the effect of the conformal change of metric $ g \mapsto h $ is to ``undo'' the effect of sampling density fluctuations and increase the robustness of the approximated spectrum of $ \upDelta_h $.  While we are not aware of analogous error estimates in the case of correlated samples generated by ergodic dynamical systems, the favorable properties of $ h $ for robust data analysis should hold in that case too.

Notice now that the Laplace-Beltrami eigenfunctions in~\eqref{eqLapl} are orthogonal on $ H_1( M, h ) $ with $ \langle \phi_i, \phi_j \rangle_{H_1} = ( 1 + \eta_i ) \delta_{ij} $,  but because $ \lVert \phi_i \rVert_{H_1} = ( 1 + \eta_i )^{1/2} $ exhibits unbounded growth as $ i \to \infty $, functions of the form $ f = \sum_{i=0}^\infty c_i \phi_i $ with $ ( c_0, c_1, \ldots ) \in \ell^2 $ are not necessarily in $ H_1(  M, h ) $. On the other hand, the rescaled eigenfunctions 
\begin{equation}
  \label{eqBasisH1}
  \varphi_0 = \phi_0, \quad  \varphi_{i>0} = \eta_i^{-1/2} \phi_i, 
\end{equation}
with $ \langle \varphi_i, \varphi_j \rangle_{H_1} = ( 1 + \eta_i^{-1} ) \delta_{ij} $ for $ i>0 $ and $ ( \varphi_0, \varphi_i ) = \delta_{0i} $, are orthogonal  (but not orthonormal)  on $ H_1( M, h) $ and have bounded $ H_1 $ norm,  $ \lVert \varphi_i \rVert_{H_1} = ( 1 + \eta_i^{-1} )^{1/2} $. Therefore, $ \{ \varphi_i \}_{i=0}^\infty $ is an orthogonal basis of $ H_1(  M, h ) $ with the property that every sequence  $ ( c_0, c_1, \ldots ) \in \ell^2 $ corresponds to a function  $ f = \sum_{i=0}^\infty c_i \varphi_i \in H_1( M, h ) $. Moreover, the Dirichlet energies $E_h(\varphi_i)$ of the basis elements are all equal to one for $ i > 0 $. Due to this property and the fact  that the approximated eigenvalues and eigenfunctions of $ \upDelta_h $ are robust against variations in the sampling density, $ \{ \varphi_i \} $ will be our basis of choice for a well-conditioned Galerkin method for the eigenvalue problem of the Koopman generator. We also note that because $ E_h( \varphi_0 ) = 0 $ and $ E_h( \varphi_i ) = 1 $ for $ i > 1 $, the Dirichlet energy from~\eqref{eqDirichlet} can be conveniently computed from the $ \ell^2 $ norm of the expansion coefficients with $ i \geq 1 $, i.e., 
\begin{equation}
  \label{eqDirichletH1}
  E_h(f) = \sum_{i=1}^\infty \lvert c_i \rvert^2.
\end{equation}
 
\begin{rk}[Weighted Laplacian] An alternative elliptic operator to $ \upDelta_h $, whose eigenfunctions also provide an orthonormal basis of $ L^2( M, \mu ) $ is the weighted Laplacian $ \upDelta_{g,\mu} = - \divr_\mu \grad_g $ associated with the invariant measure of the dynamics and the ambient-space metric $ g $. This operator is the generator of a gradient flow on  $ ( M, g ) $ with potential $ - \log \sigma $, and its eigenvalues and eigenfunctions can be obtained by extremizing the Rayleigh quotient $ R_{g,\mu}( f ) = E_{g,\mu}(f ) /\lVert f \rVert^2 $, where $ E_{g,\mu}( f ) = \int_M \lVert \grad_g f \rVert^2_g \, d\mu $. Note that unlike $ E_h $ from~\eqref{eqDirichlet}, the Dirichlet energy $ E_{g,\mu} $ does not feature a $ \sigma $-dependent term in the integral with respect to the invariant measure. Numerically, eigenfunctions of $ \upDelta_{g,\mu} $ can be computed using variable-bandwidth kernels \cite{BerryHarlim16}, or the canonical formulation of diffusion maps with radial Gaussian kernels and the ``$\alpha=1/2$'' normalization \cite{CoifmanLafon06}.  In \cite{BerryEtAl15}, eigenfunctions of $ \upDelta_{g,\mu} $ approximated via a variable bandwidth kernel were used to approximate the evolution semigroup operators (the stochastic analogs of the Koopman and Perron-Frobenius operators) of stochastic dynamical systems on manifolds. The Galerkin scheme for the Koopman generator developed here can be implemented using eigenfunctions of either $ \upDelta_h $ or $ \upDelta_{g,\mu} $ (computed via either standard diffusion maps, or variable-bandwidth kernels), though in practice we find that $ \upDelta_h $ behaves more stably in applications with large variations of the sampling density $ \sigma $ (including the applications discussed in this paper).
\end{rk}
 
\subsection{\label{secDataDrivenBasis}Data-driven orthonormal basis}

To approximate the basis in~\eqref{eqBasisH1} from data, we start from the variable-bandwidth kernel $ K_\epsilon : \mathbb{ R }^d \times \mathbb{ R }^d \mapsto \mathbb{ R } $ given by \cite{BerryHarlim16}
\begin{equation}
  \label{eqKVB}K_\epsilon( x, y ) = \exp\left( - \frac{ \lVert x - y \rVert^2 }{ \epsilon \hat\sigma^{-1/m}_\epsilon( x ) \hat\sigma_\epsilon^{-1/m }( y )  } \right),
\end{equation}
where $ \epsilon $ is a positive bandwidth parameter, $ \hat\sigma_\epsilon $ is a function approximating  $ \sigma = d\mu/\dvol_g $ at $ O( \epsilon ) $ accuracy, and $ m $ is the dimension of $ M $. The function $ \hat\sigma_\epsilon $ can be computed using any suitable density-estimation technique, and in what follows we employ the kernel method described in \cite{BerryHarlim16,BerryEtAl15}. This method uses an automatic bandwidth-selection procedure based on the method originally developed in \cite{CoifmanEtAl08}, and also provides an estimate of $ m $. Alternatively, $ m $ can be estimated using one of the dimension estimation techniques available in the literature (e.g., \cite{HeinAudibert05,LittleEtAl11}). In the numerical experiments that follow we use a priori known values for $ m $, though the estimates from the bandwidth selection algorithm are in good agreement with the true values. 

We assume that data $ x_i = F( a_i ) $ is collected from an orbit $ a_0, a_1, \ldots, a_{N-1} $ on $ M$ of a discrete-time dynamical system with flow map $ \hat \Psi_n  : M \mapsto M $, $ n \in \mathbb{ Z }$, possessing an ergodic invariant measure $ \nu $ with a smooth density $ q = d\nu/ d\mu $,  bounded away from zero. Throughout this section, the sampling system $ ( M, \mathcal{B}, \hat \Psi_n, \nu ) $ will be the same as the discrete-time system  $ ( M, \mathcal{B}, \hat \Phi_n, \mu ) $ under study; i.e.,  $ q = 1 $ and the dataset  $ \{ x_i \}_{i=0}^{N-1} $ is as in~\eqref{eqDataset}. However, we carry out our analysis in the more general setting with nonuniform $ q $ in anticipation  of the time-change techniques of Section~\ref{secTimeChange}. Using the same method to tune the kernel bandwidth parameter as in the  density-estimation step, we perform the normalizations originally introduced in diffusion maps \cite{CoifmanLafon06} and further developed in \cite{BerrySauer16b} to construct from the kernel in~\eqref{eqKVB}  a compact, ergodic, Markov operator $ \mathcal{ P }_\epsilon : L^2( M, \mu ) \mapsto L^2( M, \mu ) $ approximating the heat operator on the Riemannian manifold $ ( M, h ) $.  Specifically, we compute the action of $ \mathcal{ P }_\epsilon $ on a function $ f \in L^2( M, \mu ) $ through the operations, 
\begin{equation}
  \label{eqG}
  \mathcal{ G }_\epsilon f = \frac{ 1 }{ \epsilon^{m/2 } } \int_M K_\epsilon( F( \cdot ), F( a ) ) f( a )  \, d \mu( a ), \quad
  q_\epsilon = \mathcal{ G }_\epsilon q, \quad \mathcal{ H }_\epsilon f = \mathcal{ G }_\epsilon( f  q / q_\epsilon ),  \quad \mathcal{ P }_\epsilon f = \frac{ \mathcal{ H }_\epsilon f }{ d_\epsilon }, \quad  d_\epsilon = \mathcal{ H }_\epsilon 1.
\end{equation}
The operator $ \mathcal{ P }_\epsilon $ preserves constant functions (i.e., it is an averaging operator with $ \mathcal{ P }_\epsilon 1 = 1 $), and we have $ \mathcal{ P }_\epsilon f = \int_M p_\epsilon( \cdot, a ) f( a ) \, d \mu( a ) $ for the Markov kernel 
\begin{displaymath}
  p_\epsilon( a, b ) = \frac{ K_\epsilon( F( a ), F( b ) ) q( b ) }{ d_\epsilon(a) q_\epsilon( b ) }, \quad \int_M p_\epsilon( a, b ) \, d\mu( b) = 1, \quad a, b \in M.
\end{displaymath}
The ergodicity of $ \mathcal{ P}_\epsilon $ follows from the fact that $ p_\epsilon $ is bounded away from zero; the latter is due to $M $ being compact and  $ q $ being bounded away from zero. 

The role of the normalizations in~\eqref{eqG} is to remove biases due to curvature in $ h $ and the potentially nonuniform sampling density $ q $. In particular, taking Taylor expansions of $ \mathcal{ P }_\epsilon $ about $ \epsilon = 0 $ \citep[][]{BerryHarlim16}, one can show that uniformly on $ M $, and independently of $ q $, 
\begin{equation}
  \label{eqPDM}
  \mathcal{ P }_\epsilon f( a ) = f( a ) - \epsilon c \upDelta_h f( a ) + O( \epsilon^2 ), \quad \forall f \in C^\infty(M), 
\end{equation}
for an $ f $-independent constant $ c $, so that $( I - \mathcal{ P }_\epsilon ) f(a)  / ( c \epsilon ) $ converges uniformly to $ \Delta_h f( a ) $. Equation~\eqref{eqPDM} together with the results in \citep[][]{CoifmanLafon06,VonLuxburgEtAl08} imply that for any $ \tau > 0 $, $ \mathcal{ P}_\epsilon^{\tau/\epsilon} $ converges in operator norm (and hence in spectrum) to the heat operator $ e^{-\tau \upDelta_h} $ associated with $ h $. As a result, we can approximate eigenvalues and eigenfunctions of $ \upDelta_h $ by eigenvalues and eigenfunctions of $ \mathcal{ P }_\epsilon $. In particular, one can verify that the eigenvalues $ \kappa_i $ of $ \mathcal{ P }_\epsilon $ are real, and (because $ \mathcal{ P }_\epsilon $  is compact, Markov, and ergodic) have finite multiplicities and admit the ordering $ 1 = \kappa_0 > \kappa_1 \geq \kappa_2 \geq \cdots $, accumulating only at 0. Moreover,  $ - \log \kappa_i / \epsilon $ converges as $ \epsilon \to 0 $ to the Laplace-Beltrami eigenvalue $ \eta_i $.   

Turning to the discrete setting, we represent functions on the dataset by $ N $-dimensional vectors $ \vec f = ( f_0, \ldots, f_{N-1} ) $ with components $ f_i = f( a_i ) $, and the integral operator $ \mathcal{ P }_\epsilon $ by an $ N \times N $ Markov matrix $ P $ such that $ \sum_{j=0}^{N-1} P_{ij} f_j $ converges to  $ \mathcal{ P }_\epsilon f( a_i ) $ for every $ f \in L^2( M,\mu) $ and $ \mu $-a.e.\ starting state $ a_0 $ in the training data. In particular, by the pointwise ergodic theorem, time averages of the form $ \sum_{i=0}^{N-1} f_i / N $ converge $ \mu $-a.s.\ to the integrals $ \int_M f q \, d\mu $. Therefore, up to  proportionality constants, we approximate $ q_\epsilon( a_i ) $ and $ d_\epsilon ( a_i ) $ by $ \hat q_i = \sum_{j=0}^{N-1} K_{ij} $ and $ \hat d_i = \sum_{j=0}^{N-1} H_{ij}  $, respectively, where $ K_{ij} = K_\epsilon( F( a_i ), F( a_j ) ) $, and $ H_{ij} = K_{ij} / \hat q_j $. The matrix elements of $ P $ are then given by $ P_{ij} = H_{ij} / \hat d_i $. With these ingredients, we approximate the eigenvalues and eigenfunctions of $ \mathcal{ P }_\epsilon $ using the corresponding eigenvalues and eigenvectors of $ P $, 
\begin{equation}
  \label{eqPhiDiscrete}
  P \vec \phi_i = \hat \kappa_i  \vec \phi_i, \quad i \in \{ 0, 1, \ldots, N -1 \}, \quad 1 = \hat \kappa_0 > \hat \kappa_1 \geq \hat \kappa_2 \geq \cdots \geq \hat \kappa_{N-1}, \quad \vec \phi_i = (  \phi_{0i}, \ldots,  \phi_{N-1,i} ) \in \mathbb{ R }^N.
\end{equation}
As $ N \to \infty $, and for a suitable scaling $ \epsilon( N ) \to 0 $,  we have $ \phi_{ji} \to \phi_j( a_i ) $, $ \hat \kappa_i \to \kappa_i $, and $ - \log \hat \kappa_i / \epsilon \to \eta_ i $, $ \mu $-a.s.\ and up to proportionality constants. By convention, we will work with the normalized eigenvalues $ \hat \eta_i = \log \hat \kappa_i / \log \hat \kappa_1 $. Geometrically, this scaling is equivalent to a uniform scaling of the Riemannian metric $ h $, which has no  influence on the techniques presented in Section~\ref{secGalerkin} ahead as it can be absorbed by a rescaling of the diffusion regularization parameter $ \varepsilon $.  

The eigenvectors from~\eqref{eqPhiDiscrete} form an orthonormal basis for functions sampled on the dataset with respect to the weighted inner product (cf.\ \eqref{eqMonteCarlo})
\begin{equation}
  \label{eqInnerProdPi}
  \langle \vec f_1, \vec f_2 \rangle_{w } = \frac{ 1 }{ N } \sum_{i=0}^{N-1}   f^*_1(a_i) f_2( a_i ) w_i, \quad  \vec f_j = ( f_j(a_0), \ldots, f_j( a_N ) ), \quad f_j \in L^2(M,\mu),
\end{equation}
where $ w_i $ are the elements of the stationary density $ \vec w = ( w_0, \ldots, w_{N-1} ) $ of $ P $, satisfying $ \vec w P = \vec w $ and $  \sum_{i=0}^{N-1 } w_i = 1 $. For $ P $ constructed as described above one can verify that $ w_i = ( \hat d_i / \hat q_i ) /  ( \sum_{j=0}^{N-1} \hat d_j /  \hat q_j   ) $. Using small-$\epsilon $ Taylor expansions as in~\eqref{eqPDM} and the pointwise ergodic theorem, it can also be shown that $ \lim_{\epsilon\to 0} \lim_{N\to \infty} ( N w_i ) = 1 / q(a_i ) $ for $ \nu $-a.e.\ starting state $ a_0 $ in the training data.  Thus, the inner product in~\eqref{eqInnerProdPi} is asymptotically equivalent to the $L^2 $ inner product with respect to $ \mu $; explicitly,  $ \lim_{\epsilon\to0} \lim_{N\to \infty} \langle \vec f_1, \vec f_2 \rangle_{w }  = \int f_1^* f_2 \, d\mu $, $ \mu $-a.s. 

The analog of the Dirichlet energy in~\eqref{eqDirichlet} for the discrete eigenfunctions is $  E(\vec \phi_i ) = \hat \eta_i $ (whenever $ \hat \kappa_i > 0$), and for the function $ \vec f $ above we have $  E(\vec f) =\sum_{i=1}^{N-1} \hat \eta_i \lvert c_i \rvert^2  $. This quantity converges  up to a proportionality constant to $ E_h( f) $. Hereafter, whenever there is no risk of confusion with the continuous case, we will omit hats and overarrows in our notation for quantities computed for the discrete dataset, such as $ \vec\phi_i  $,  $ \hat \eta_i $, and $ \hat \kappa_i $. The construction of our data-driven basis of  is summarized in Algorithm~\ref{algBasis} in~\ref{appAlgorithms}.   
  
\subsection{\label{secGalerkin}Spectral Galerkin method}

We solve the eigenvalue problem for the Koopman generator in weak form in the basis of $ H_1( M, \mu ) $ in~\eqref{eqBasisH1}. Because the set of eigenvalues may be dense on the imaginary line (see Remark~\ref{remEigDense}), we first regularize the problem by adding a small amount of diffusion to the dynamical vector field $ v $ to form the operator $L_{\varepsilon}: C^\infty(M) \mapsto C^\infty(M) $, where
\begin{equation}
  \label{eqL}L_{\varepsilon} = v - \varepsilon \upDelta_h, \quad \varepsilon > 0.
\end{equation}
We  solve the eigenvalue problem for this operator using spectral Galerkin methods for elliptic eigenvalue problems. We begin from the strong form of the problem, 
\begin{equation}
  \label{eqLStrong}
  L_{\varepsilon} u = \gamma u, \quad \gamma \in \mathbb{ C }, \quad u \in C^\infty( M ),
\end{equation}
where $ \Real \gamma $ and $ \Imag \gamma $ measure the growth rate and oscillatory frequency associated with eigenfunction  $u $. Note that $ L_\varepsilon $ is dissipative, $ \langle u, L_{\varepsilon} u \rangle \leq 0 $, so $ \Real \gamma $ is necessarily non-negative. Intuitively, for small $ \varepsilon $, $ \Imag \gamma $ and $ u $ should approximate a Koopman eigenfrequency $ \omega $ and an eigenfunction $ z $ from~\eqref{eqEigV}, respectively, and $\Real \gamma $ should be large and negative if the Dirichlet energy $ E_h( z ) $ is large. In other words, the role of the diffusion term $ \varepsilon \upDelta_h $ is to suppress highly oscillatory Koopman eigenfunctions, which, as stated in Remark~\ref{remOscillatory}, can have arbitrarily small frequencies. With these pathological eigenfunctions eliminated, we can identify approximate  generating Koopman eigenfunctions  $ \{ \zeta_i \}$ of minimal roughness and the corresponding frequencies $ \{ \Omega_i \} $ by ordering the eigenfunctions $ u $ in order of increasing Dirichlet energy. Note that in general $ L_\varepsilon $ is non-normal, and as a result its eigenfunctions will generally be non-orthogonal.  We will return to a discussion of the non-normality of $ L_\varepsilon $ and the effects of diffusion in its spectral properties in Section~\ref{secSpectral}.

To pass to a weak form of the problem, we multiply both sides of~\eqref{eqLStrong} by a test function $ \psi \in C^\infty( M ) $, integrate by parts with respect to the invariant measure, and require that the resulting integral equation is satisfied for all elements of appropriate trial and test spaces, which we both take  to be $ H_1( M, \mu )$.

\begin{defn}[Eigenvalue problem for $ L_{\varepsilon} $, weak form]
  \label{defEig}
    Find $ \gamma \in \mathbb{ C } $ and $ u \in H_1( M, \mu ) $ such that for any $ \psi \in H_1( M, \mu ) $,
  \begin{displaymath}
    \mathcal{ A }( \psi, u ) = \gamma \mathcal{ B }( \psi, u ),
  \end{displaymath}
  where $ \mathcal{ A } $ and $ \mathcal{ B } $ are sesquilinear forms on $ H_1( M, \mu ) \times H_1( M, \mu ) $ given by 
  \begin{displaymath}
    \mathcal{A}( \psi, u ) = \mathcal{ V }( \psi, u ) - {\varepsilon} \mathcal{ D }( \psi, u ), \quad \mathcal{ V }( \psi, u ) = \langle \psi, v( u ) \rangle, \quad  \mathcal{ D }( \psi, u ) = \langle \grad_h \psi, \grad_h u \rangle, \quad \mathcal{ B }( \psi, u ) = \langle \psi, u \rangle.
  \end{displaymath}
\end{defn}

One can verify that $ \mathcal{ A } $ has the boundedness and coercivity properties 
\begin{equation}
  \label{eqCoercivity}
  \lvert \mathcal{A}( \psi, z ) \rvert \leq C_1 \lVert \psi \rVert_{H_1} \lVert z \rVert_{H_1}, \quad  \mathcal{A}( z, z ) \leq -\varepsilon C_2 \lVert z \rVert^2_{H_1},
\end{equation}
respectively, where $ C_1, C_2 $ are positive constants, and $ \psi, z $ arbitrary functions in $ H_1(M,h) $ orthogonal to constant functions. Together, these conditions ensure that the variational eigenvalue problem in Definition~\ref{defEig} is well posed \citep{BabuskaOsborn91}. Note that the skew-symmetry of $ v $ (and therefore $ \mathcal{V  }$) is important for establishing the coercivity of $ \mathcal{A} $, and thus for us to be able to take advantage of the theory and spectral approximation techniques for variational eigenvalue problems. As stated in Remark~\ref{remTime}, meeting such well-posedness conditions would have been more challenging had we worked with the unitary Koopman operator $ U_t $ instead of $ v $. Another useful property due to the skew-symmetry of $ v $ is that the Dirichlet energies of the solutions can be determined from the real part of the eigenvalues, i.e.,
\begin{equation}
  \label{eqDirichletGamma}
  E_h( z ) = - \mathcal{ D }( z, z ) / \varepsilon = - \Real( \gamma ) / \varepsilon.
\end{equation}

In the Galerkin approximation of the problem, we formally restrict the trial and test spaces to the $ n $-dimensional subspaces $ H_{1,n} = \spn \{ \varphi_0, \ldots, \varphi_{n-1} \} \subset H_1( M, \mu ) $ spanned by the basis functions in~\eqref{eqBasisH1}; i.e., we have  $ \psi = \sum_{i=0}^{n-1} d_i \varphi_i $ and $ u = \sum_{i=0}^{n-1} c_i \varphi_i $, where  $ c = ( c_0, \ldots, c_{n-1} ) $ and $ d = ( d_0, \ldots, d_{n-1} ) $ are complex-valued expansion coefficients. However, because instead of the true Laplace-Beltrami eigenfunctions we only have access to the approximate eigenfunctions from Algorithm~\ref{algBasis}, and furthermore we do not have access to the exact vector field $ v $, we additionally make the following approximations for the evaluation of the sesquilinear forms in the continuous problem.
\begin{subequations}
  \label{eqMatEig}
  \begin{enumerate}
  \item We approximate the basis function values on the dataset by the vectors $ \vec \varphi_i = ( \varphi_{0i}, \ldots, \varphi_{N-1,i} ) \in \mathbb{ R }^N $ with $ \varphi_{ji } =  \phi_{ji} / \hat \eta_i \approx  \varphi_i( a_j ) $, where $ \{ \vec \phi_i \} $ and $ \{ \hat \eta_i \} $ are the eigenvectors and eigenvalues from Algorithm~\ref{algBasis}.
  \item In the case of $ \mathcal{ D } $, we put 
    \begin{equation}
      \mathcal{ D }( \psi, u ) = \sum_{i,j=0}^{n-1} \int_M d_i^* c_j \grad_h \varphi_i  \cdot \grad_h \varphi_j \, d \mu = d^\dag D c, 
    \end{equation}
    where $ D $ is the $ n \times n $ identity matrix with $ D_{ij} = \langle \grad_h \varphi_i, \grad_h \varphi_j \rangle = \delta_{ij} $.
  \item In the case of $ \mathcal{ B } $, we put
    \begin{equation}
      \mathcal{ B }( \psi, u ) = \sum_{i,j=0}^{n-1} \int_M d_i^* c_j \varphi_i \varphi_j \, d \mu \approx \sum_{i,j=0}^{n-1} d_i^* c_j \langle \vec \varphi_i, \vec \varphi_j \rangle_{w}  = d^\dag B c, 
    \end{equation}
    where $ B $ is the $ n \times n $ diagonal matrix with the diagonal entries $ B_{00} = 1$ and $ B_{ii} =\eta_i^{-1} $ for $ i > 1$.
  \item In the case of $ \mathcal{ V } $, we proceed similarly as with $ \mathcal{ D } $ and $ \mathcal{ B } $, but we also approximate the action of the vector field $ v( \varphi_{j} ) $ on the basis elements using finite differences in time as stated in Remark~\ref{remTime}. Hereafter, we will use a second-order central scheme for the sampling interval $ T $, viz., $ v( \varphi_{j}( a_k ) )\approx (  \varphi_{j,k+1} - \varphi_{j,k-1} ) / ( 2 T ) $. We therefore set
    \begin{equation}
      \mathcal{ V }( \psi, u ) = \sum_{i,j=0}^{n-1} \int_M d_i^* c_j \varphi_i v( \varphi_j ) \, d \mu \approx d^\dag V c, 
    \end{equation}
    where $ V $ is the $ n \times n $ matrix with elements $  V_{ij} = \sum_{k=1}^{N-2}  \varphi_{ik}  w_k (  \varphi_{j,k+1} - \varphi_{j,k-1} ) / ( 2 T ) $.
  \end{enumerate}
\end{subequations}
With these approximations, we define:
\begin{defn}[Eigenvalue problem for $ L_{\varepsilon} $, discrete approximation]  \label{defEigDisc} Find $ \gamma \in \mathbb{ C } $ and $ c \in \mathbb{ C }^n $ such that for any $ d \in \mathbb{ C }^n $,
\begin{displaymath}
  \hat{\mathcal{ A }}( d, c ) = \gamma \hat{\mathcal{ B }}( d, c ),
\end{displaymath}
where $ \hat{\mathcal{ A } }$ and $ \hat{\mathcal{ B } }$ are sesquilinear forms on $ \mathbb{ C }^n \times \mathbb{ C }^n $ given by
\begin{displaymath}
  \hat{\mathcal{A}}(d,c) = d^\dag A c, \quad A = V - \varepsilon D,  \quad \hat{\mathcal{B} }( d, c ) = d^\dag B c.
\end{displaymath}
\end{defn}
The solution to the discrete problem is given by the matrix generalized eigenvalue problem
\begin{equation}
  \label{eqGEV}
  A c = \gamma B c.
\end{equation}

\begin{rk}
  The $ \{ \varphi_i \} $ basis from~\eqref{eqBasisH1} and its discrete counterpart $ \{ \vec\varphi_i \} $ are adapted to the $ H_1 $ regularity of the weak eigenvalue problem for $ L_{\varepsilon} $ in the sense that the highest-order sesquilinear form $ \mathcal{ D } $ appearing in the weak formulation of the problem is represented by the identity matrix, $ D = I $. This property ensures that the scheme remains well-conditioned at large spectral orders of approximation $ n $. In contrast, the condition number of the $ D $ matrix would exhibit unbounded growth with $ n $ if we were to work in the unscaled eigenfunction basis. This approach of tailoring the approximation basis to the Sobolev regularity of the continuous problem is sometimes used in spectral Galerkin methods with polynomial basis functions (e.g., \cite{MelenkEtAl00,GiannakisEtAl09}). 
\end{rk}

The solution of the generalized eigenvalue problem in~\eqref{eqGEV} yields $ n' \leq n $  (depending on the numerical algorithm used) eigenvalue-eigenvector pairs $ \{ ( \gamma_k, c_k ) \}_{k=0}^{n'-1} $ with $ c_k = ( c_{0k}, \ldots, c_{n-1,k} ) \in \mathbb{ C }^n $ and the corresponding discretely sampled eigenfunctions $ u_k = \sum_{i=0}^{n-1}  \varphi_i c_{ik} $ with $ u_k = ( u_{0k}, \ldots, u_{N-1,k} ) \in \mathbb{ C }^N$. Throughout, we work with the normalization $ \lVert u_k \rVert_{w} = 1 $ from~\eqref{eqInnerProdPi}, which approximates the normalization $ \lVert u_k \rVert = 1 $ on $ L^2( M, \mu ) $. The errors of approximating the exact eigenvalues and eigenfunctions of the generator in~\eqref{eqEigV} via the eigenvalue problem in Definition~\ref{defEigDisc} can be summarized as (1) sampling errors (i.e., errors that vanish as $ N \to \infty $), (2) approximation errors in the Laplace-Beltrami eigenvalues and eigenfunctions computed through the operator $ \mathcal{ P }_\epsilon $ in diffusion maps (i.e., errors that vanish as $ \epsilon \to 0 $), (3) finite-difference errors in approximating the action of $ v $ on functions (i.e., errors that vanish as $ T \to 0 $), (4) Galerkin approximation errors (i.e., errors that vanish as $ n \to \infty $), and (5) diffusion regularization errors from the use of the regularized generator  $ L_{\varepsilon} $ with $ \varepsilon > 0 $. Among these, errors of type (1)--(4) vanish unconditionally in the respective limits stated above. However, as with many other-data driven techniques for Koopman and Perron-Frobenius operators (see Section~\ref{secBackground}), our method's behavior with respect to diffusion regularization is more complicated to analyze, particularly if the system has Koopman eigenfunctions not lying in $H_1 $ and/or continuous spectrum. We will return to these points in Section~\ref{secSpectral}.    

To identify a generating set $ \{ \zeta_i\}_{i=1}^m $ for the eigenfunctions and the corresponding basic frequencies $\{ \Omega_i \}_{i=1}^m $ using the approach of Section~\ref{secDimensionReduction}, we first order the eigenfunctions $ u_k $ in order of increasing Dirichlet energy, which we compute from the discrete analog of~\eqref{eqDirichletH1}, $ E( u_k)= \sum_{i=1}^n \lvert c_{ik} \rvert^2 = \lVert c_k \rVert^2 $. Due to~\eqref{eqDirichletGamma}, we can expect that $ E( u_k ) \approx - \Real \gamma_k / \varepsilon $ (where the equality is approximate due to errors of type (1)--(4) discussed above), so it generally suffices to compute a subset of the eigenvalues with the largest real parts; this is particularly convenient when solving~\eqref{eqGEV} with iterative solvers.  Then, we select $ \{ \zeta_i \}_{i=1}^m $ and $ \{ \Omega_i \}_{i=1}^m $ from the first $ m $ nonconstant eigenfunctions in this set with ``numerically rationally independent'' eigenvalues up to some precision. Operationally, we declare $ \Omega_i $ and $ \Omega_j $ to be rationally independent at precision $ ( \delta, \bar q ) $ if there exist no integers $ q_i $ and $ q_j $ with absolute value smaller than $ \bar q $ such that $ \lvert q_j \Omega_i - q_i \Omega_j \rvert \leq \delta $. In practice, it is usually easy to identify  rationally independent frequencies from the first few numerical eigenvalues manually. The numerical procedure to compute the generating frequencies and eigenfunctions is summarized in Algorithm~\ref{algGenerators} in~\ref{appAlgorithms}. 

Using the identified generating sets, we form the product bases $ \{ z_k \} $ and associated frequencies $ \{ \omega_k \} $ as described in Definition~\ref{defnPurePoint}. Note that due to the errors described above,  the numerical generators $ \zeta_i = (\zeta_{0i}, \ldots, \zeta_{N-1,i} ) \in \mathbb{ C }^N  $ will not lie exactly on the unit circle. We therefore rescale the generators pointwise to ensure that $  \lvert \zeta_{ij} \rvert = 1 $ before taking powers. Specifically, fixing a positive spectral order parameter $ l $, we compute the approximate eigenfunctions $ z_k = \prod_{k=1}^m \zeta_i^{k_i} $ (with products and powers taken elementwise on vectors in $ \mathbb{ C }^N $)  and the corresponding eigenfrequencies $ \omega_k = \sum_{i=1}^m k_i \Omega_i $ for all integers $ k_i $ in the range $ [ -l, l ] $. This leads to a tensor product dictionary of data-driven observables, consisting of $ ( 2 l + 1 )^m $ approximate Koopman eigenfunctions. Experimentally, we find that computing the  $ z_k $  recursively from the generators produces more accurate results than the raw solutions of~\eqref{eqGEV}, presumably because the accuracy of the data-driven basis $ \{ \phi_i \} $ degrades more rapidly at large $ i $ than the loss of accuracy resulting from the unit-circle normalization of the  $  \zeta_i  $ and subsequent products and powers  to form higher-order Koopman eigenfunctions. 

With the availability of the dictionary $ \{ z_k \} $ and the corresponding eigenfrequencies $ \{ \omega_k \} $, algorithms for dimension reduction, vector field decomposition, and forecasting of densities in systems with pure point spectra can be constructed following closely the continuous formulation in Sections~\ref{secDimensionReduction} and~\ref{secForecasting}, replacing inner products on $ L^2( M, \mu ) $ with the weighted inner products from~\eqref{eqInnerProdPi}, and representing the action of the dynamical vector field through finite differences of the diffusion maps basis as described above. An additional issue that needs to be taken into account is the non-orthogonality of the eigenfunctions of $L_\varepsilon$ discussed in Section~\ref{secSpectral}. In particular, the lack of orthogonality of the eigenfunctions leads to a modification of the inverse transforms to reconstruct observables and vector fields. For an observable $ f $ with expansion coefficients $ \hat f_k = \langle f, z_k \rangle_{w} $, where the $ z_k $ are computed recursively from the generating eigenfunctions as described in Section~\ref{secGalerkin}, we now  have $ f = \sum_k \tilde f_k z_k $ (as opposed to $ f = \sum_k \hat f_k z_k$ in the case of exact Koopman eigenfunctions), where $ \tilde f_k = \sum_i G^{-1} _{ki} \hat f_i $, and $ G^{-1} $ is the inverse of the Gramm matrix $ G_{ij} = \langle z_i, z_j \rangle_{w} $. This matrix will generally fail to be an identity matrix due to both sampling and bias errors, but in applications we find that it is a well conditioned, sparse matrix. Our algorithms for vector field decomposition and nonparametric forecasting are listed in Algorithms~\ref{algVDecomp} and~\ref{algStatisticalForecast} in~\ref{appAlgorithms}, respectively.

\subsection{\label{secSpectral}Spectral properties of the regularized generator}

The spectral properties of the regularized generator $ L_{\varepsilon} $ from~\eqref{eqL} are compounded by the facts that (1) the limit $ \varepsilon \to 0 $ is a singular limit of the corresponding eigenvalue equation; (2) apart from special cases (e.g., irrational flows on flat tori), the operators $ v $ and $ \upDelta_h $ do not commute, and as result $ L_{\varepsilon} $ is nonnormal with 
\begin{equation}
  \label{eqLComm}
  [ L_{\varepsilon}^*, L_{\varepsilon} ] = 2 {\varepsilon} [ v, \upDelta_h ].
\end{equation} 
If the generator has a complete set of smooth eigenfunctions, heuristic asymptotic expansions suggest that the influence of the diffusion term on these eigenfunctions should be benign. In particular, writing $ u_k = z_k + \varepsilon u_k' + O( \varepsilon^2 ) $ and $ \gamma_k = \lambda_k + \varepsilon \gamma_k' + O( \varepsilon^2) $ with $ v( z_k ) = \lambda_k z_k $, and inserting these asymptotic series in the eigenvalue equation~\eqref{eqLStrong}, we obtain the $ O( \varepsilon ) $ equation 
\begin{displaymath}
  ( v - \lambda_k ) u'_k = ( \gamma'_k  - \upDelta_h ) z_k.
\end{displaymath}
This equation can be solved by requiring that $ u'_k $ is orthogonal to $ z_k $ as a solvability condition, i.e., $ u'_k = \sum_{i\neq k} c_{ik} z_i $, giving
\begin{displaymath}
  c_{ik} = \frac{ \langle \grad_h z_i, \grad_h z_k \rangle }{ \lambda_i - \lambda_k }, \quad \gamma'_k = \langle \grad_h z_k, \grad_h z_k \rangle = E_h(z_k).
\end{displaymath}
We therefore see that, at $ O( \varepsilon ) $, the diffusion term perturbs the eigenvalues of $ v $ corresponding to smooth eigenfunctions by a purely real term equal to the Dirichlet energy of the unperturbed eigenfunctions. This provides a more quantitative estimate of the suppression of highly-oscillatory eigenfunctions of $ v $ from the spectrum of $ L_{\varepsilon} $ claimed in Section~\ref{secGalerkin}. Note that the imaginary part of the perturbation to $ \lambda_k $,  which is important for nonparametric forecasting, occurs at $ O( \varepsilon^2 ) $.  It also follows from these results that the non-orthogonality of the eigenfunctions of $ L_{\varepsilon} $ can be estimated by $ \langle u_i, u_k \rangle = \varepsilon c_{ik} + O( \varepsilon^2 ) $, and the coefficients $ c_{ik} $ vanish if $ z_i $ and $ z_k $ are eigenfunctions of both $ v $  and $ \upDelta_h $. 

In general, orthogonality and completeness of the eigenfunctions of $ L_{\varepsilon} $ is guaranteed at all orders in $ \varepsilon $ if $ v $ and $ \upDelta_h $ are commuting operators. One situation that this occurs is when the dynamical flow $ \Phi_t $ preserves the Riemannian metric $ h $, i.e., $ v $ generates $ h $-isometries. A necessary and sufficient condition for $ v $ to generate isometries is that $ v $ is a Killing vector field satisfying the equation $ \mathcal{ L}_v h = 0 $, where $ \mathcal{ L }_v $ is the Lie derivative on type $ ( 0, 2 ) $ tensors with respect to $ v $. One can check that if this equation is satisfied then $ [ v, \upDelta_h ] = 0 $. Moreover, when $ v $ generates isometries of a smooth Riemannian metric, then the generator $ \tilde v $  must necessarily have a pure point spectrum \citep[][\S7.1.c]{HasselblattKatok02}.  This is because the group of diffeomorphisms of a Riemannian manifold is a compact Lie group containing $\{ \Phi_t  \} $ as an Abelian subgroup, and translations on compact Abelian groups have pure point spectra (see Section~\ref{secErgodicity}). 

A particularly important property that holds if $ [ v, \upDelta_h ] = 0 $ is that $ v $ and $ \upDelta_h $ have joint smooth eigenfunctions. This means that (1) the eigenvalue problem for $ L_{\varepsilon} $ yields exact eigenfunctions of $ v $; (2) the Galerkin method for approximating these eigenfunctions becomes highly efficient in the eigenbasis of $ \upDelta_h $; (3) the eigenfunctions of $ v $ are extrema of the Rayleigh quotient $ R_h $ for the metric $ h $. In particular, (3) implies that the leading generating eigenfunction $ \zeta_1 $ identified with respect to the Dirichlet energy $E_h( \zeta_1 ) $ lies entirely in the subspace of $ L^2(M,\mu) $ spanned by eigenfunctions of $ \upDelta_h $ corresponding to its smallest nonzero eigenvalue. These facts suggest that for a system with pure point spectrum it would be preferable to regularize the generator with diffusion in a Riemannian metric $ h $ preserved by the dynamics. In Section~\ref{secTakens}, we will present an approach for approximating such a metric using delay-coordinate maps, but for the rest of this Section we will continue to work with  $ h $ as defined in Section~\ref{secGalerkinPrelim}. As expected from the asymptotics (and verified in the experiments in Section~\ref{secTorus} ahead), the effects of noncommutativity of $ v $ and $ \upDelta_h $ should have minimal impact on the quality of the numerical eigenfunctions in this case. Moreover, it is important to establish that accurate Koopman eigenfunctions can be computed without having to perform delay-coordinate maps.          

What about the behavior of $ L_{\varepsilon} $ in more general ergodic systems where $ \tilde v $ has non-smooth eigenfunctions and/or  continuous spectrum, and in particular in weak-mixing systems where it has no nonconstant eigenfunctions? Mathematically, $ L_{\varepsilon} $ has the same structure as  a class of advection-diffusion operators arising in viscous, incompressible fluid dynamics on compact manifolds (recall that $ v $ generates an incompressible flow with respect to the invariant measure), for which theoretical results are available in the literature \cite{Franke04,ConstantinEtAl08,FrankeEtAl10}. The latter references study the dynamical and spectral properties of operators of the form $ \tilde L_\alpha = \alpha v + \upDelta $ in the advection dominated regime, $ \alpha \to \infty $, and these operators are equivalent to $ \varepsilon^{-1} L_{\varepsilon} $ with $ \varepsilon = 1 / \alpha $. Franke et al.\ \cite{FrankeEtAl10} show that (a closed extension of) $ \tilde L_\alpha $, and hence $ L_{\varepsilon} $, has only point spectrum irrespective of the mixing properties of $ \tilde v $. For our purposes, this is a positive result as it eliminates an important source of numerical instability in the discrete problem. However, the physical significance of the eigenfunctions of $ L_{\varepsilon} $ (and their utility in the dimension reduction and forecasting schemes of Sections~\ref{secPurePointSpectra} and~\ref{secGalerkinImplementation}) becomes questionable, especially in weak-mixing systems where $ \tilde v $ has no nonconstant eigenfunctions. 

In \cite{ConstantinEtAl08}, Constantin et al.\ establish necessary and sufficient conditions for $ \tilde L_\alpha $ to have the so-called relaxation-enhancing property as $ \alpha \to \infty $, i.e., the property that the associated Kolmogorov semigroup (the advection-diffusion analog of the Koopman group) will produce relaxation of any function in $ L^2(M , \mu ) $ to its mean value in arbitrarily small time. They show that $ \tilde L_\alpha $ is relaxation enhancing if and only if $ v $ has no eigenfunctions in $ H_1( M, \mu ) $; a condition that includes but is not necessarily limited to weak-mixing systems. The spectral manifestation of the relaxation-enhancing property is that the spectral gap of $ \tilde L_\alpha $ diverges as $ \alpha \to \infty $ \cite{FrankeEtAl10}, meaning that $ L_{\varepsilon} = \varepsilon \tilde L_{1/\varepsilon} $ will generally exhibit complicated spectral behavior, including the possibility of no eigenvalues with negligible real parts at arbitrarily small $ \varepsilon $. On the other hand, if $ \tilde v $ has eigenfunctions in $ H_1( M, \mu ) $, then $ L_{\varepsilon} $ must necessarily have a vanishing spectral gap, and the asymptotic behavior described earlier applies. Due to these considerations, rather than attempting to regularize weak-mixing systems by diffusion alone, in Section~\ref{secTimeChange} we will put forward an alternative approach which also involves a time change \citep[][]{KatokThouvenot06}, implemented with variable bandwidth kernels of the same class as~\eqref{eqKVB}, attempting to reduce the mixing properties of the system while preserving its orbits.

\subsection{\label{secTorus}Applications to variable-speed flows on the 2-torus}

We apply the dimension reduction, vector field decomposition, and nonparametric forecasting techniques described above to the dynamical system on $ \mathbb{ T }^2 $ with the vector field (cf.\ \eqref{eqTorusIrrational})
\begin{equation}
  \label{eqTorusDynamical}
  v = \sum_{i=1}^2 v^i \frac{ \partial \ }{ \partial \theta^i }, \quad  v^1 = 1 + ( 1 - \beta )^{1/2} \cos \theta^1, \quad   v^2  = \alpha ( 1 - ( 1 - \beta )^{1/2} \sin \theta^2 ),
\end{equation}
where $ \theta^1, \theta^2 \in [ 0, 2 \pi ) $ are canonical angles on the torus, $ \alpha $ is a frequency parameter set to an irrational number, and $ \beta \in ( 0, 1 ] $ is a parameter controlling the speed of the flow. This system, which was also studied in \cite{Giannakis15}, is ergodic and has pure point spectrum as it can be transformed to an irrational linear flow with appropriate changes of coordinates. Its unique Borel invariant measure has density 
\begin{equation}
  \label{eqTorusSigma}
  \sigma( \theta^1, \theta^2 ) \propto 1 / [ ( 1 + ( 1 - \beta )^{1/2} \cos( \theta^1 ) )( 1 - ( 1 - \beta )^{1/2} \sin\theta^2 ) ]
\end{equation}
relative to the Haar measure on $ \mathbb{ T }^2 $, and the orbit $ ( \theta^1( t ), \theta^2( t ) ) $ passing through $ ( 0, 0 ) $ at time $ t = 0 $ is given by
\begin{equation}
  \label{eqTorusOrbit}
  \tan ( \theta^1( t ) / 2 ) = [ 1 + ( 1 - \beta )^{1/2} ] \beta^{-1/2}  \tan( \beta t / 2 ), \quad \cot( \theta^2( t )  / 2 ) = ( 1 - \beta )^{1/2} + \beta^{1/2} \cot( \beta^{1/2} \alpha t / 2 ).
\end{equation} 

Here, we use the same frequency parameter,  $ \alpha = 30^{1/2} $, as in the irrational flow example of Section~\ref{secIrrational}, and set the parameter $ \beta $ to $ 1/2 $. We also use the same observation map $F$ as in Section~\ref{secIrrational}, i.e., the standard embedding of the 2-torus into $ \mathbb{ R }^3 $ from~\eqref{eqTorusEmbedding} with radius parameter $ R = 1 / 2 $. As illustrated in Fig.~\ref{figTorusX},  with this choice of parameters the system ``slows down'' at $ (\theta^1, \theta^2 ) \sim ( \pi, \pi/2 ) $ and ``speeds up'' at $ ( \theta^1, \theta^2 ) \sim ( 0, -\pi/2 ) $, resulting in a large density contrast $ \sigma( \pi, \pi/ 2 ) / \sigma( 0, - \pi / 2 ) \simeq 34 $ relative to the Haar measure. Moreover, the system exhibits two timescales which are mixed together in the components of the observation map. Despite the apparent complexity of these time series, the underlying dynamical system is completely integrable and our method is able to detect this structure using no information other than time-ordered data.    

\begin{figure}
  \centering\includegraphics{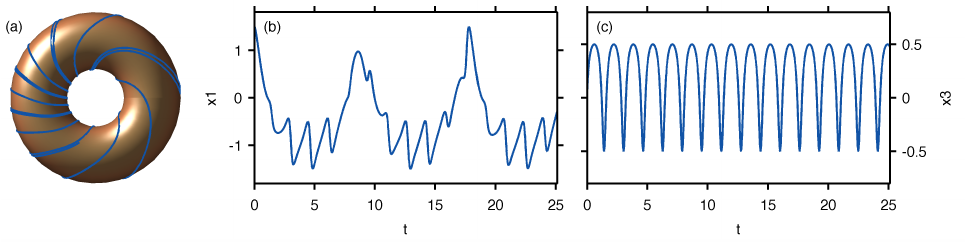}
  \caption{\label{figTorusX}Observed time series for the variable-speed dynamical system on the 2-torus. (a) Embedding in $ \mathbb{ R }^3 $ via $ F $; (b, c) components $ x^1 $ and $ x^3 $ of the embedding, respectively.}
\end{figure}

In our experiments, we used as training data a time series consisting of $ N = \text{64,000} $ samples $ \{ x_i \}_{i=0}^{N-1} $, $ x_i = F( \theta( t_i ) ) $, $ t_i = ( i - 1 ) T $,  taken from the orbit in~\eqref{eqTorusOrbit} at a timestep $ T = 2 \pi / 500 $. Using this data, we computed $ n = 1000 $ diffusion eigenvalues and eigenfunctions $ \{ ( \eta_i, \phi_i ) \}_{i=0}^{n-1} $ through Algorithm~\ref{algBasis}, and constructed the generating set $ \{ \zeta_1, \zeta_2 \} $ of Koopman eigenfunctions and the corresponding basic frequencies $ \{ \Omega_1, \Omega_2 \} $ using Algorithm~\ref{algGenerators} with $ \varepsilon = 3 \times 10^{-4} $. We used the true manifold dimension, $ m = 2 $, as an input to these algorithms, but in this case an accurate dimension estimate can also be obtained from the kernel density estimation procedure in Algorithm~\ref{algBasis}.   The eigenvalues of the approximate generator $ L_{\varepsilon} $ corresponding to the basic frequencies are $ \gamma_1 = 0.0005 + 0.707 \ii $ and $ \gamma_3 = 0.0016 + 3.871 \ii $ ($ \gamma_2 $ is the complex conjugate of $ \gamma_1 $), and $ \Omega_i = \Imag \gamma_i $. According to our convention, these eigenvalues are ordered in order of increasing Dirichlet energy; in this case, $ E_1 = 1.52 $ and $ E_3 = 5.34 $. Moreover, the computed eigenvalues have $ \Real \gamma_1 / \varepsilon = 1.51 \approx E_1 $     and $ \Real \gamma_3 / \varepsilon = 5.34 \approx E_2 $, which is in good agreement with~\eqref{eqDirichletGamma}. 

The generating eigenfunctions from these calculations are displayed in Fig.~\ref{figTorusZ}. There, it is evident that the projection maps from~\eqref{eqPiProj} based on the eigenfunctions send the dataset to a near-exact circle in the complex plane, and the time series $ t_j \mapsto \zeta_i( \theta_j ) $ describe simple harmonic oscillations as expected from~\eqref{eqZSHO}. The frequencies of these oscillations (measured, e.g., through FFT) are in very good agreement with $ \Omega_i $. As illustrated by the scatterplots in Fig.~\ref{figTorusZ}(a, b), eigenfunctions $ \zeta_1 $ and $ \zeta_2 $ vary purely along angles $ \theta^1 $ and $ \theta^2 $, respectively. This is an outcome of the fact that in the induced Riemannian geometry from $ F $ these eigenfunctions have the smallest Dirichlet energies, but in other geometries we could have mixtures of the form $ \zeta_1^{q_1} \zeta_2^{q_2} $ appearing as the least oscillatory eigenfunctions. Compared to their counterparts for the irrational flow in Section~\ref{secIrrational} (which are pure sinusoids in the $ \theta^1 $ and $ \theta^2 $ angles), the generating eigenfunctions for the variable-speed flow exhibit ``compressed''  (``expanded'') waveforms in the regions of anomalously high (low) phase-space speed. As discussed in the SOM, the results in Fig.~\ref{figTorusZ} are in good agreement with results obtained via EDMD using a dictionary of functions consisting of time-lagged components of the observation vector in $ \mathbb{ R }^3 $. 

\begin{figure}
  \centering\includegraphics[scale=.9]{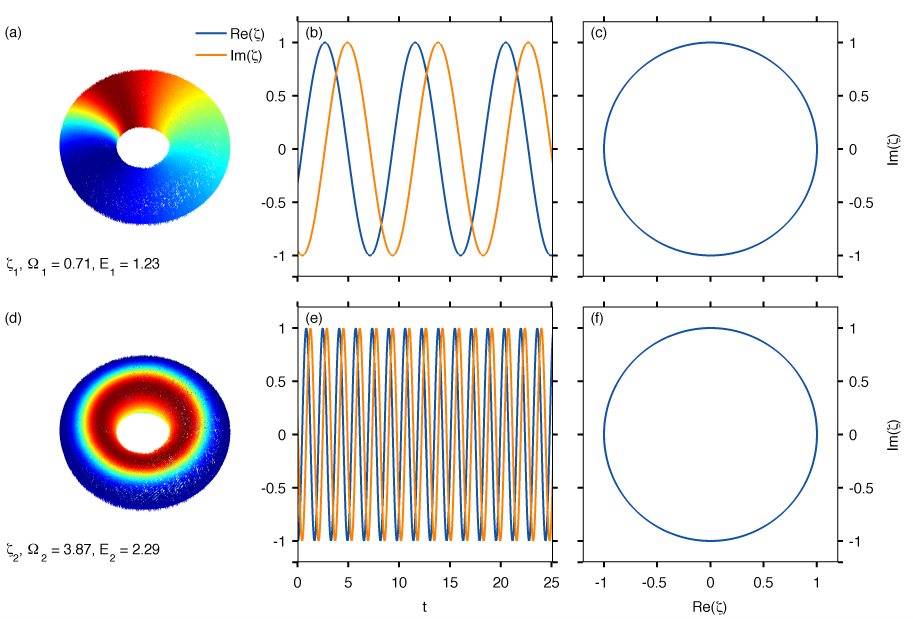}
  \caption{\label{figTorusZ}The generating Koopman eigenfunctions $ \zeta_1 $ (a--c) and $ \zeta_2 $ (d--f) for the variable-speed dynamical system on the 2-torus. (a, d) Scatterplots of $ \Real( \zeta_i ) $ on the torus; (b, e) time series of $ \Real \zeta_i $ and $ \Imag \zeta_i  $; (c, f) scatterplots of $ (\Real \zeta_i, \Imag \zeta_i ) $. The eigenfunctions shown here have not been rescaled to the unit circle---they are the result of Step~5 of Algorithm~\ref{algGenerators}.}
\end{figure}

With these results for the generating eigenfunctions and basic frequencies, we use Algorithm~\ref{algVDecomp} to decompose $ v $ into the mutually commuting vector fields $ \{ v_i \}_{i=1}^2 $ from Theorem~\ref{lemmaVDecomp}, and reconstruct these vector fields in data space. Figure~\ref{figTorusV} shows the reconstructions obtained with  the spectral order parameter $ l = 30 $, corresponding to $ n = ( 2 l + 1 )^2 = 3721 $ eigenfunctions. The vector fields exhibit the qualitative features expected from the corresponding generating eigenfunctions in Fig.~\ref{figTorusZ}; that is, the reconstructed vector fields, $ V_1 $ and $ V_2 $, describe flows purely along the $ \theta^ 1 $ and $ \theta^2 $ directions, respectively. As discussed in Section~\ref{secVectorField}, these flows preserve the equilibrium measure of the dynamics (but not the Riemannian measure of the torus embedded in $ \mathbb{ R }^3 $), and are clearly not ergodic as $ v_1 $ ($v_2$) has a non-trivial nullspace spanned by functions with no $ \theta^2 $ ($\theta^1$) dependence. With this large number of basis functions, the time-averaged (root mean square) reconstruction error in $ V_1 + V_2 $ is 0.2\% of the time-averaged norm of the full vector field $ V $ in $ \mathbb{ R }^3 $. Note that high-quality reconstructions are possible with substantially fewer basis functions (e.g., $ l = 5 $, $ ( 2 l + 1 )^2 = 121 $ yields a 5.9\% error). Moreover, the expansion coefficients of the $ V_i $ in the $ \{ z_k \} $ basis exhibit strong sparsity, suggesting that the reconstructed patterns can be efficiently compressed using subsets of the $ \{ z_k \} $ basis as dictionaries.       

\begin{figure}
  \centering\includegraphics[scale=1]{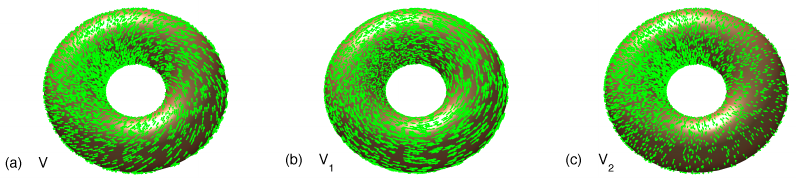}
  \caption{\label{figTorusV}Vector field decomposition for the variable-speed dynamical system on the 2-torus. (a) Full vector field $ v $ embedded in $ \mathbb{ R }^3 $; (b, c) mutually commuting components $ v_i $ from Theorem~\ref{lemmaVDecomp} corresponding to the  generating eigenfunctions $ \zeta_i$ in Fig.~\ref{figTorusZ}, reconstructed in $ \mathbb{ R }^3 $ via the pushforward map in~\eqref{eqVPushZeta}.}
\end{figure}

Next, we consider statistical forecasting of the components $ F^1( \theta ) = x^1 $ and $ F^3( \theta ) = x^3 $ of the  observation map using the nonparametric technique in Algorithm~\ref{algStatisticalForecast}. As with the irrational-flow example in Section~\ref{secIrrational}, we assign an initial probability measure $ \mu_0 $ with a von Mises density relative to the Haar measure.  In this case, we set the location and concentration parameters $ ( \bar \theta^1, \bar \theta^2 ) = ( \pi, \pi ) $ and $ \kappa = 30 $, respectively. Note that the equilibrium measure $ \mu $ of the variable-speed system differs from the Haar measure on the torus, so the initial density $ \rho_0 = d \mu_0 / d \mu $ is given by the density in~\eqref{eqVonMises} divided by $ \sigma $ from~\eqref{eqTorusSigma}. We normalize the initial density on the dataset so that $ \sum_{i=0}^{N-1} \rho_{0 i } / N = 1 $, where  $ \rho_{0,i} = \rho_0( a_i ) $. A scatterplot of the initial density on the torus is shown in Fig.~\ref{figTorusRhoT}(a). We advance this density and the corresponding expectation values and standard deviations for $ x^1 $ and $ x^3 $ for the lead times $ \{ t_i \}_{i=1}^{1000} $ with $ t_i = i \, T $ using Algorithm~\ref{algStatisticalForecast}. Figure~\ref{figTorusRhoT} shows representative snapshots of the time-dependent density for these lead times, which is is also visualized as a video in Movie~1. Figure~\ref{figTorusForecast} displays the means and standard deviations of $ x^1 $ and $ x^3 $ obtained via Algorithm~\ref{algStatisticalForecast} and an ensemble forecast with the perfect model using 10,000 samples drawn independently from $ \mu_0 $.

\begin{figure}
  \centering\includegraphics{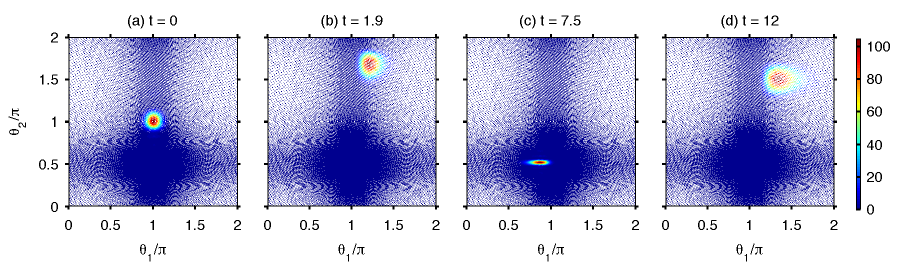}
  \caption{\label{figTorusRhoT}Time-dependent density $ \rho_t $ for the variable-speed dynamical system on the 2-torus relative to its equilibrium measure computed via the nonparametric method in Algorithm~\ref{algStatisticalForecast}. (a) Initial von Mises density with location and concentration parameters $ ( \pi, \pi ) $ and  $ 30 $, respectively; (b--d) snapshots of $ \rho_t $ illustrating phases of high uncertainty (b, d) and a phase of low uncertainty (c). The dynamic evolution of $ \rho_t $ is also shown in Movie~1.}
\end{figure}

\begin{figure}
  \centering\includegraphics[scale=1]{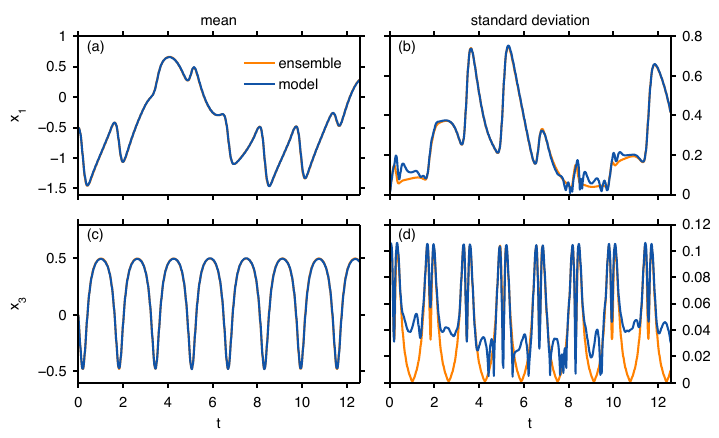}
  \caption{\label{figTorusForecast}Statistical forecasts via the nonparametric method in Algorithm~\ref{algStatisticalForecast} and a 10,000-member ensemble from the perfect model for the variable-speed system on the 2-torus. The forecast observables are the components $ x^1 $ (a, b) and $ x^3 $ (c, d) of the embedding of $\mathbb{T}^2$ into $ \mathbb{ R }^3 $ from~\eqref{eqTorusEmbedding} with radius parameter $ R = 1/2 $. The initial probability measure has the density in Fig.~\ref{figTorusRhoT}(a).}
\end{figure}

Following an initial transient stage, the probability density in Fig.~\ref{figTorusRhoT} relaxes to an aperiodic pattern characterized by alternating phases of high (Fig.~\ref{figTorusRhoT}(b, d)) and low (Fig.~\ref{figTorusRhoT}(b)) uncertainty, corresponding to the regions with high speed (low density relative to the Haar measure) and low speed (high density relative to the Haar measure), respectively. Qualitatively, the time-dependent measure will eventually explore the full phase space on the torus by ergodicity, but because the system is not mixing, the density $ \rho_t $ retains its coherence and its Dirichlet energy is bounded in time. The results in Fig.~\ref{figTorusForecast}(a, c) indicate that the mean forecast with the nonparametric model is in very good agreement with the ensemble forecast. The standard deviation forecast in Fig.~\ref{figTorusForecast}(b, d) is also in good agreement with the ensemble, but in this case the nonparametric model overestimates the standard deviation in the low-uncertainty periods (especially for $ x^3 $). A factor that may be contributing to this discrepancy is that during those periods the density $ \rho_t $ exhibits small-scale behavior and correspondingly large bandwidth in the $ \{ z_k \} $ basis, limiting the accuracy of our finite-$ l $ truncation.         
 
\section{\label{secTakens}Galerkin method with delay-coordinate maps for systems with pure point spectra}

Despite the attractive numerical results in Section~\ref{secTorus}, in Section~\ref{secSpectral} we saw that due to non-commutativity of $ v $ and $ \upDelta_h $, the advection-diffusion operator $ L_{\varepsilon} $ constructed via~\eqref{eqL} may not be optimal for approximating Koopman eigenfunctions. In this Section, we present an approach for spectral decomposition of systems with pure point spectra that uses  delay-coordinate maps to construct a diffusion operator that commutes with the Koopman group. This approach leads to a highly efficient Galerkin scheme for solving the Koopman eigenvalue problem, and provides a natural way of denoising data corrupted by i.i.d.\ observational noise. 

\subsection{\label{secDelay}Relationship between Koopman and Laplace-Beltrami operators in delay-coordinate space}

Delay-coordinate maps \cite{PackardEtAl80,Takens81,SauerEtAl91,Robinson05,DeyleSugihara11} were originally developed as a state-space reconstruction technique that maps a time-ordered signal into a higher-dimensional space of sequences where, under mild assumptions, the attractor of the dynamical system generating the data is recovered. Fixing an integer parameter $ s $ (the number of delays), we replace the observed time series $ \{  x_i \}_{i=0}^{N-1} $ in $ \mathbb{ R }^d $ by the time series $ \{ X_i \}_{i={s-1}}^{N-1} $, where $  X_i = ( x_i, x_{i-1}, \ldots, x_{i-s+1} ) \in \mathbb{ R }^{sd} $. This procedure implicitly defines a new observation map, $ F_s : M \mapsto \mathbb{ R }^{sd} $, with 
\begin{equation}
  \label{eqDelay}
  F_s( a_i ) = ( F( a_i ), F( \hat \Phi_{i-1}( a_i ) ), \ldots, F( \hat \Phi_{i-s+1}( a_i ) ) ), 
\end{equation} 
where $ F $ and $ \hat \Phi_i $ are the observation map and discrete-time flow map introduced in Section~\ref{secErgodicity}, respectively. (Recall that the dynamical system is assumed to be invertible so we can evaluate $ \hat \Phi_k $ for both positive and negative $k$.) Here, we are interested in the behavior of diffusion maps for this class of observation maps at large numbers of delays $ s $.    

First, we examine the Riemannian metric $ g_s $ induced on $ M $ via the embedding in~\eqref{eqDelay} when $ R^{sd} $ is equipped with the ``time-average'' inner product $ \langle X_i, X_j \rangle_s =  \sum_{k=0}^{s-1} \langle x_{i-k},  x_{j-k} \rangle / s $, where $ \langle \cdot, \cdot \rangle $ denotes the canonical inner product on $\mathbb{ R }^d $. The Riemannian inner product of two tangent vectors $ u_1, u_2 \in T_a M $ with respect to this metric becomes
\begin{equation}
  \label{eqGS}
  g_s ( u_1, u_2 ) = \frac{ 1 }{ s } \sum_{k=0}^{s-1} g( \hat \Phi_{-k*}  u_1, \hat \Phi_{-k*}  u_2 ) =  \frac{ 1 }{ s } \sum_{k=0}^{s-1} \langle F_* \Phi_{-k*}  u_1,  F_* \Phi_{-k*}  u_2 \rangle,
\end{equation}
where $ g $ is the induced Riemannian metric associated with $ F $, and $  \hat \Phi_{k*} : T M \mapsto TM $ and $ F_* : TM \mapsto T \mathbb{ R }^d $  are the pushforward maps on tangent vectors associated  $ \hat \Phi_k $ and $ F $, respectively. The limiting behavior of the sequence of smooth Riemannian metrics $ g_1, g_2, \ldots $  is controlled by the spectrum of Lyapunov exponents of the system. For systems with nonzero Lyapunov exponents, $ g_s $ will generally fail to converge to a smooth tensor. In particular, according to Oseledets' multiplicative ergodic theorem \cite{Oseledets68,Arnold98}, there exists a splitting of the tangent bundle into the direct sum $ TM = \bigoplus_{i=1}^m E_i $ where the subspaces $E_i $ are invariant under $ \hat \Phi_{k*} $, and for $ \mu $-a.e.\ $ a \in M $ and $ u \in E_i\rvert_a \setminus 0 $, 
\begin{displaymath}
  \lim_{k \to \infty} \frac{ 1 }{ 2k } \log\left(  g ( \Phi_{-k*}  u, \Phi_{-k*} u ) \right) = - \Lambda_i, 
\end{displaymath}
where $ \Lambda_i $ is the Lyapunov exponent corresponding to $ E_i $.  This means that if $ \Lambda_i < 0 $, the inner product $ g_s( u, u) $ from~\eqref{eqGS} will exhibit exponential growth with $ s $, and similarly if $ \Lambda_i > 0 $ $ g_s( u, u) $ will converge to zero. Thus, in systems with nonzero Lyapunov exponents the limit metric $ \lim_{s\to\infty} g_s $ will be non-smooth. Note that had we used forward-looking instead of backward-looking delays, $ g_s $ would expand (contract) the unstable (stable) subspaces. However, as shown below, in the case of systems with pure point spectra and smooth Koopman eigenfunctions, $ \bar g = \lim_{s\to\infty} $ is a smooth, flow-invariant metric tensor.
\begin{thm}
  \label{thmGBar}
  In systems with pure point spectra, $ C^\infty $ Koopman eigenfunctions, and $ m $ basic frequencies,
  \begin{displaymath}
    \bar g = \sum_{i=1}^m B_{ij} \beta_i \otimes \beta_j, 
  \end{displaymath}
  where $ \beta_i $ are smooth dual vector fields to the vector fields $ v_i $ from Theorem~\ref{lemmaVDecomp}, defined uniquely through the relations $ \beta_i( v_j ) = \delta_{ij} $, and $ B_{ij} = \int_M g( v_i, v_j ) \, d\mu $ are the Hodge inner products of the $ v_i $ on  $ L^2( TM, g, \mu ) $. Moreover, $ \bar g $ has the following properties:
  
  (i) It is invariant under each of the flows $ \Phi_{i,t} $ generated by $ v_i $; i.e., $ \Phi_{i,t*} \bar g = \bar g $, where $ \Phi_{i,t*} $ is the pullback map on $ (0,2) $ tensors associated with $ \Phi_{i,t*} $.  

  (ii) It is flat.  
  
  (iii) Its associated volume form has uniform density relative to the invariant measure; i.e., $ \dvol_{\bar g} / d \mu = \Gamma $, where $ \Gamma $ is a positive constant.  

\end{thm}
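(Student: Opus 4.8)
The plan is to pass to the angle coordinates $(\tilde\theta^1,\dots,\tilde\theta^m)$ on $M\simeq\mathbb{T}^m$ introduced in Section~\ref{secDimensionReduction}, in which $v=\sum_{i=1}^m\Omega_i\,\partial/\partial\tilde\theta^i$ and $v_i=\Omega_i\,\partial/\partial\tilde\theta^i$ (the $v_i$ of Theorem~\ref{lemmaVDecomp}), and in which — by~\eqref{eqZSHO} with $t=kT$ — the discrete-time flow $\hat\Phi_k$ acts by the translation $\tilde\theta\mapsto\tilde\theta+kT(\Omega_1,\dots,\Omega_m)$. The crucial observation is that the differential of a translation is the identity on the coordinate frame $\{\partial/\partial\tilde\theta^i\}$, so writing $g=\sum_{ij}g_{ij}\,d\tilde\theta^i\otimes d\tilde\theta^j$ we obtain directly from~\eqref{eqGS}
\[
  g_s=\frac1s\sum_{k=0}^{s-1}\hat\Phi_{-k}^{*}g=\sum_{ij}\Bigl(\frac1s\sum_{k=0}^{s-1}g_{ij}\circ\hat\Phi_{-k}\Bigr)\,d\tilde\theta^i\otimes d\tilde\theta^j,
\]
so that the tensors $g_s$ differ only through their component functions, which are Birkhoff averages of the fixed smooth functions $g_{ij}$ along the orbit.

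The next step is to pass to the limit $s\to\infty$. The discrete-time system, being ergodic by assumption and conjugate via $\pi$ to a translation on $\mathbb{T}^m$, is uniquely ergodic, and in the $\tilde\theta$ coordinates $\mu$ is the normalized Haar measure $d\mu=(2\pi)^{-m}\,d\tilde\theta^1\wedge\cdots\wedge d\tilde\theta^m$. Unique ergodicity makes the Birkhoff averages of the continuous functions $g_{ij}$ converge uniformly to their $\mu$-means, so $g_s\to\bar g$ with $\bar g=\sum_{ij}\bar g_{ij}\,d\tilde\theta^i\otimes d\tilde\theta^j$ and $\bar g_{ij}=\int_M g_{ij}\,d\mu$; having constant coefficients, $\bar g$ is a smooth tensor field on $M$, and it is positive-definite since $\sum_{ij}\bar g_{ij}\xi^i\xi^j=\int_M\bigl(\sum_{ij}g_{ij}(a)\xi^i\xi^j\bigr)\,d\mu(a)>0$ for $\xi\neq0$. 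To match the stated coordinate-free form, note that the coframe dual to $\{v_i\}$ is $\beta_i=\Omega_i^{-1}\,d\tilde\theta^i$ (no sum), while $B_{ij}=\int_M g(v_i,v_j)\,d\mu=\Omega_i\Omega_j\bar g_{ij}$; hence $\sum_{ij}B_{ij}\,\beta_i\otimes\beta_j=\sum_{ij}\bar g_{ij}\,d\tilde\theta^i\otimes d\tilde\theta^j=\bar g$.

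The three properties then follow from this normal form. For (i), a constant-coefficient metric in the $\tilde\theta$ coordinates is invariant under every translation, in particular under the flows $\Phi_{i,t}$, which act by $\tilde\theta^i\mapsto\tilde\theta^i+\Omega_i t$ with the remaining coordinates fixed, so $\Phi_{i,t}^{*}\bar g=\bar g$. For (ii), the Christoffel symbols of $\bar g$ vanish identically in these coordinates, hence so does its curvature tensor, and $\bar g$ is flat. For (iii), $\dvol_{\bar g}=\sqrt{\det(\bar g_{ij})}\;d\tilde\theta^1\wedge\cdots\wedge d\tilde\theta^m$ with $\sqrt{\det(\bar g_{ij})}$ a positive constant, so $\dvol_{\bar g}/d\mu=(2\pi)^m\sqrt{\det(\bar g_{ij})}=:\Gamma>0$.

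The genuinely non-routine ingredients are the two steps underlying the reduction: recognizing that in the angle coordinates the delay-map pullbacks $\hat\Phi_{-k}^{*}$ act purely on the component functions $g_{ij}$, collapsing the whole limit to a Birkhoff average, and invoking unique ergodicity of torus translations to upgrade almost-everywhere ergodic convergence to the uniform convergence needed to pin down $\bar g$. The remaining bookkeeping — identifying $B_{ij}$ with the dual frame $\beta_i$ and checking that positive-definiteness and the volume-form identity survive the limit — is routine once the normal form is established, and I do not expect obstacles there.
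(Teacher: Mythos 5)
Your proof is correct, but it follows a genuinely different route from the one in~\ref{appGBar}. You conjugate the system to its translation normal form: in the angle coordinates of Section~\ref{secDimensionReduction} (where $v_i = \Omega_i\,\partial/\partial\tilde\theta^i$ and $\hat\Phi_k$ acts by translation), the pullbacks in~\eqref{eqGS} act only on the component functions $g_{ij}$, so $g_s$ has components given by Birkhoff averages of the $g_{ij}$, and unique ergodicity of the (ergodic) torus translation upgrades these to uniformly convergent averages with constant limits $\bar g_{ij} = \int_M g_{ij}\,d\mu$; the stated formula and all three properties then drop out of the constant-coefficient normal form, with $\mu$ identified as Haar measure in these coordinates. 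The paper instead never leaves the Koopman-eigenfunction language: it computes $\bar g(v_i,v_j)$ by expanding $F$ in the $\{z_k\}$ basis, using~\eqref{eqZSHO} to reduce the time average to Kronecker deltas (yielding the explicit expression $\sum_p \lVert \hat F_p\rVert^2\Omega_i\Omega_j p_ip_j = B_{ij}$ in terms of Koopman modes), and then proves (i) from the translation property in Theorem~\ref{lemmaVDecomp}(iv), (ii) from the standard fact that $m$ linearly independent commuting Killing fields force flatness, and (iii) from $\divr_\mu v_i = 0$ in coordinates adapted to the $v_i$. Your argument is shorter, makes (i)--(iii) essentially immediate, and handles the $s\to\infty$ limit more carefully (the paper's interchange of the limit with the infinite sums over Fourier indices is left implicit, whereas uniform convergence of Birkhoff averages of the continuous $g_{ij}$ settles it); its cost is that it leans explicitly on the global diffeomorphism $M\simeq\mathbb{T}^m$ and smooth generating eigenfunctions, while the paper's computation stays closer to the empirically accessible objects (Koopman modes $\hat F_p$) and produces a formula for $B_{ij}$ directly in those terms. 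Two small points worth making explicit in your write-up: the angle variables are only local coordinates (though the frame $\{\partial/\partial\tilde\theta^i\}$ and coframe are global, which is all you use), and the Birkhoff averages run along the backward orbit $\hat\Phi_{-k}$, so you should note that the inverse translation is likewise uniquely ergodic.
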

A proof of Theorem~\ref{thmGBar} can be found in~\ref{appGBar}. 
\begin{cor} \label{corGBarInv}  The inverse metric associated with $ \bar g $ is given by $ \bar g^{-1} = \sum_{i=1}^m B^{-1}_{ij} v_i \otimes v_j $, where $ B^{-1}_{ij} $ are the elements of the $ m \times m $ inverse Gramm matrix $ [ B_{ij} ]_{ij}^{-1} $.
\end{cor}

According to Theorem~\ref{thmGBar}, the $ v_i $ are Killing vector fields of $ \bar g $, which in turn implies that $ v_i $ (and hence $ v $) commute with the Laplace-Beltrami operator $ \upDelta_{\bar g} : C^\infty( M ) \mapsto C^\infty( M) $ associated with $ \bar g $; a result which can be also verified by explicit calculation of $ \upDelta_{\bar g} $.
\begin{lemma} 
  \label{lemmaDeltaBar}The Laplace-Beltrami operator $ \upDelta_{\bar g}$ associated with the metric $ \bar g $ from Theorem~\ref{thmGBar}  is
  \begin{displaymath}
    \upDelta_{\bar g} = - \sum_{i,j=1}^m B_{ij}^{-1} v_i \circ v_j.
  \end{displaymath}
\end{lemma}
\begin{proof}
  It follows from Corollary~\ref{corGBarInv} that the gradient of a function $ f \in C^\infty(M) $ with respect to $ \bar g$ is given by $ \grad_{\bar g} f = \bar g^{-1}( \cdot, df ) = \sum_{i=1}^m B^{-1}_{ij} v_i( f ) v_j $. Moreover, according to Theorem~\ref{lemmaVDecomp}, the $ v_i $ have vanishing $ \mu $-divergence, and since $ \vol_{\bar g} = \Gamma \mu $ by Theorem~\ref{thmGBar}(iii), the $ v_i $ have vanishing divergence with respect to $ \vol_{\bar g} $ too (in fact, $ \divr_{\mu} = \divr_{\bar g} $ as can be seen from the expression for the divergence in local coordinates in \eqref{eqGradDiv}). Using the Leibniz rule for the divergence, $ \divr_{\bar g}( u f ) = f \divr_{\bar g} u + u( f ) $ where $ u $ is an arbitrary smooth vector field, we obtain
  \begin{displaymath}
    \upDelta_{\bar g} f = - \divr_{\bar g} \grad_{\bar g} f = - \sum_{i,j=1}^m B_{ij}^{-1} \divr_{\bar g}( v_i( f ) v_j )  = - \sum_{i,j=1}^m B_{ij}^{-1} ( v_i \circ v_j )( f ). \qedhere
  \end{displaymath}
\end{proof}

The expression for $ \upDelta_{\bar g} $ in Lemma~\ref{lemmaDeltaBar}  in conjunction with the fact that $ [ v_i, v ] = 0 $ manifestly shows that $ v $ and $ \upDelta_{\bar g} $ are commuting operators. Thus, a Koopman eigenfunction $ z_k $ with corresponding eigenvalue $ \lambda_k = \ii \sum_{i=1}^m k_i \Omega_i $ is also an eigenfunction of $ \upDelta_{\bar g} $. In particular, $ \{ z_k, z_k^* \} $, or, equivalently $ \{ \Real z_k, \Imag z_k \} $, are orthogonal eigenfunctions of $ \upDelta_{\bar g} $ at the corresponding eigenvalue $ \sum_{i=1}^m B^{-1}_{ij} \Omega_i \Omega_j k_i k_j  $.  As stated in Section~\ref{secSpectral}, the leading generating eigenfunction $ \zeta_1 $ selected on the basis of the Dirichlet energy $ E_{\bar g}( \zeta_1 ) $ lies entirely in the eigenspace of $ \upDelta_{\bar g } $ corresponding to its smallest nonzero eigenvalue, $ B_{11}^{-1} \Omega_1^2 $. Moreover, the Galerkin approximation space associated with the eigenfunctions of $ \upDelta_{\bar g} $ is efficient for approximating the eigenfunctions of $ v $ since each $ z_k $ is expressible as a finite linear combination of eigenfunctions of $ \upDelta_{\bar g} $.   

To approximate eigenfunctions of $ \upDelta_{\bar g} $ using a finite number of delays, we apply diffusion maps with a variable-bandwidth kernel on $ \mathbb{ R }^{sd} \times \mathbb{ R }^{sd} $ analogous to~\eqref{eqKVB}, viz.
\begin{equation}
  \label{eqKVBE}K_\epsilon(  X_i, X_j ) = \exp\left( - \frac{ \lVert X_i -  X_j \rVert^2 }{ \epsilon \hat \sigma^{-1/m}_{s,\epsilon}(  X_i ) \hat \sigma_{s,\epsilon}^{-1/m }(  X_j )  } \right),
\end{equation}
where $ \hat \sigma_{s,\epsilon} $ are estimates of the sampling density $ \sigma_s = d\mu/\dvol_{g_s} $, accurate at $ O( \epsilon ) $. Note that as $ s \to \infty $, $ \sigma_s $ tends to a constant $ \bar \sigma = 1 / \Gamma $ in accordance with Theorem~\ref{thmGBar}(iii), but at finite $ s $ it will exhibit fluctuations, and the variable-bandwidth kernel ensures that the numerical eigenfunctions are orthogonal with respect to the invariant measure despite fluctuations in the sampling density. In particular, at finite $ s $, diffusion maps approximates the Laplace-Beltrami operator $ \upDelta_{h_s} $ associated with the conformally transformed metric $ h_s = g_s \sigma_s^{2/m} $, which has analogous invariance  properties and stable behavior as the metric $ h $ introduced in Section~\ref{secGalerkinPrelim}.

The procedure for tuning the kernel bandwidth and computing $ \hat\sigma_{s,\epsilon} $ and the Laplace-Beltrami eigenfunctions follows Algorithm~\ref{algBasis} with the kernel in~\eqref{eqKVB} replaced by~\eqref{eqKVBE}. The computational cost of pairwise-kernel evaluations associated with delay-coordinate maps scales linearly with $ s $. In applications, we sometimes found that the automatic tuning procedure somewhat underestimates appropriate values for $ \epsilon $ at large $ s $, and a modest bandwidth inflation was necessary to produce stable solutions to the diffusion maps eigenvalue problem. However, the results of the Galerkin method in Algorithm~\ref{algGenerators} were not too sensitive after $ \epsilon $ exceeded a threshold. We have checked that the numerical results presented in this Section and in Section~\ref{secNoise} are robust for different values of bandwidth inflation in the range $ 4 \times $ to $ 10 \times $. In practice, there are obvious limitations  on how large $ s $ can be since delay embedding reduces the $ N $ samples originally available for analysis to $ N - s $ (the first $ s $ samples are ``used up'' to create $ \tilde X_1 $), and $ N - s $ must be large to ensure convergence.     
   
To illustrate the behavior of the diffusion maps basis obtained via this approach, in Fig.~\ref{figTorusC} we compare the moduli $ \lvert \langle \phi_k, \zeta_i \rangle \rvert $ of the expansion coefficients of the Koopman eigenfunctions computed via Algorithm~\ref{algGenerators} in the diffusion maps basis $ \{ \phi_k \} $ for $ s = 1 $ (i.e.,  the case studied in Section~\ref{secTorus}) and $ s = 800 $. In the latter case, the ambient space dimension is $ sd = 2400 $, but the dataset lies in a set $ F_s( M ) $ of intrinsic dimension $ m = 2 $ which has the manifold structure of a 2-torus equipped (according to Theorem~\ref{thmGBar}) with an approximately flat metric. To compute Koopman eigenvalues and eigenfunctions, we used the same dataset (prior to delay-coordinate mapping) as in Section~\ref{secTorus}, and executed Algorithm~\ref{algGenerators} using the same parameters as in that Section. While the spectra of the $ \zeta_i $ in Fig.~\ref{figTorusC} are concentrated in the leading ($\lesssim 30$) diffusion eigenfunctions in both cases, it is evident that in the $ s=800 $ case the spectra are significantly sparser, especially for $ \zeta_1 $ which projects almost entirely onto the diffusion eigenfunctions in the leading two-dimensional eigenspace with corresponding eigenvalue $ \eta_1 $, as expected theoretically. Eigenfunction $ \zeta_2 $ also exhibits a tight spectral expansion for $ s = 800 $ but includes significant contributions from three pairs of diffusion eigenfunctions. 
       
\begin{figure}
  \centering\includegraphics[scale=.9]{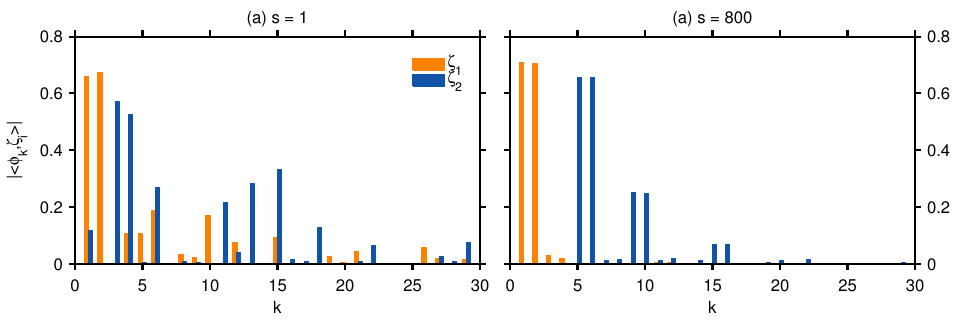}
  \caption{\label{figTorusC}Moduli  $ \lvert \langle \phi_k, \zeta_i \rangle \rvert $ of the expansion coefficients of the generating Koopman eigenfunctions $ \zeta_1 $ and $ \zeta_2 $ in the Laplace-Beltrami eigenfunction basis $ \{ \phi_k \} $ from diffusion maps for the variable-speed flow on the 2-torus. (a) No delay coordinate maps, $ s= 1 $; (b) delay-coordinate maps with $ s = 800 $ delays.}
\end{figure}

\begin{rk}[Diffusion maps and timescale separation] Delay-coordinate maps have previously been employed in conjunction with diffusion maps in methods for extraction of spatiotemporal patterns in complex systems \cite{GiannakisMajda12a,BerryEtAl13}. In these works and related applications (e.g., \cite{SlawinskaGiannakis17}), a behavior that has typically been observed is that as the number of delays increases, the time series formed by the diffusion eigenfunctions $ \phi_k $ become increasingly monochromatic, and are able to isolate distinct frequencies from broadband input signals. In~\cite{BerryEtAl13} this behavior was justified under the assumption that, for a suitable normalization, the diffusion operator approximated by diffusion maps approximates the long-time relaxation to equilibrium of the true (deterministic) system. Under that assumption, the time series of the leading $ \phi_k $ should evolve independently at the characteristic timescales determined by the corresponding diffusion eigenvalues, and the fact that the diffusion eigenvalues have finite spacings leads to timescale separation. Here, we have shown that in the case of systems with pure point spectra and smooth Koopman eigenfunctions, the timescale separation in diffusion maps can be explained from the connection between the generator of the Koopman group and the Laplace-Beltrami operator established in Lemma~\ref{lemmaDeltaBar}. That is, in such systems and in the limit of infinitely many delays  the diffusion eigenfunctions converge to Koopman eigenfunctions, which provide ``maximal'' timescale separation due to the fact that they evolve at a single frequency. Of course, the analysis presented here applies only in a rather restrictive setting, but our results should nevertheless provide a useful reference point to study connections between diffusion maps and Koopman operators in more complex systems. \end{rk}

As stated below~\eqref{eqGS}, in systems with nonzero Lyapunov exponents the induced metric $ g_s $ becomes ill-behaved as $ s \to \infty $ since it exponentially expands the stable Oseledets subspaces while shrinking the stable subspaces (in a manner that preserves the invariant measure).  The consequence of this behavior in the context of diffusion maps is that in the limit $ s \to \infty $ and $ \epsilon \to 0 $ the operator $ ( I - \mathcal{ P }_\epsilon) / \epsilon  $ (see Section~\ref{secDataDrivenBasis}) fails to converge to a Laplace-Beltrami operator associated with a smooth metric. Operationally, this means that as $ s $ increases the eigenfunctions from diffusion maps become increasingly biased towards the subspace of $ L^2( M, \mu ) $ consisting of functions with vanishing directional derivatives along all but the most stable subspace (in other words, eigenfunctions with appreciable directional derivatives along the unstable subspaces become increasingly rough in the kernel induced geometry). This effect was first identified in~\cite{BerryEtAl13}, who used a weighted version of the delay-coordinate map in~\eqref{eqDelay}  to regularize the induced metric and establish connections with Lyapunov metrics of dynamical systems \cite{Arnold98}. While biasing the eigenfunctions towards stable subspaces may actually be desirable in certain cases (e.g., by providing an effective means for intrinsic dimension reduction in systems with many positive Lyapunov exponents), in other cases, delay-coordinate maps with many delays may hinder the performance of approximation methods for Koopman eigenfunctions (e.g., if approximate eigenfunctions associated with unstable directions are desired). For these reasons, the availability of schemes such as those presented in Section~\ref{secGalerkinImplementation} and Section~\ref{secTimeChange} ahead which are able to accurately approximate Koopman eigenfunctions without performing delay-coordinate maps is important.    

\subsection{\label{secNoise}Koopman eigenfunctions from noisy data}

In this section, we demonstrate that besides being useful for improving the efficiency of spectral Galerkin schemes for the Koopman eigenvalue problem, delay-coordinate maps are also effective when dealing with data generated by dynamical systems with pure point spectra which are corrupted by i.i.d.\ observational noise. Specifically, we consider that instead of the noise-free time series $ \{ x_i \} $ we observe a noisy time series $ \{ \tilde x_i \} $, where $ \tilde x_i = x_i + \xi_i $, and the $ \xi_i $ are i.i.d.\ random variables in $ \mathbb{ R }^d $ such that $ \mathbb{ E } ( \xi_i ) = 0 $, $ \mathbb{ E }(  \lVert \xi_i \rVert^2 ) = R^2 < \infty $, and the third and fourth moments of $ \xi_i $ are finite (note that the $ \xi_i $ should not to be confused with the time change function $ \xi $ introduced in Section~\ref{secTimeChange} ahead). Here, the assumption that the $ \xi_i $ have vanishing expectation leads to no loss of generality since a nonzero constant expectation can be absorbed by a shift of the data, which leaves the pairwise distances in the diffusion maps kernel unchanged. Besides the above requirements, we do not make specific assumptions about the distribution of the $ \xi_i $, but in the numerical experiments below we use Gaussian noise.    

Following the approach presented in Section~\ref{secDelay}, we fix an integer parameter $ s $ and replace the observed time series $ \{ \tilde x_i \}_{i=0}^{N-1} $ by the time series $ \{ \tilde X_i \}_{i={s-1}}^{N-1} $, where $ \tilde X_i = ( \tilde x_i, \tilde x_{i-1}, \ldots, \tilde x_{i-s+1} ) \in \mathbb{ R }^{sd} $. Note that we can also write $ \tilde X_i = X_i + \Xi_i $, where   $ X_i = F_s( a_i ) = (  x_i,  x_{i-1}, \ldots, x_{i-s+1} ) $ are  the noise-free samples in delay-coordinate space, and $ \Xi_i = ( \xi_i, \xi_{i-1}, \ldots, \xi_{i-s+1} ) $. To compute diffusion eigenfunctions from noisy data, we replace the kernel in~\eqref{eqKVBE} by 
\begin{equation}
  \label{eqKVB2}K_\epsilon( \tilde X_i, \tilde X_j ) = \exp\left( - \frac{ \lVert \tilde X_i - \tilde X_j \rVert^2 }{ \epsilon \tau^{-1/m}_{s,\epsilon}( \tilde X_i ) \tau_{s,\epsilon}^{-1/m }( \tilde X_j )  } \right),
\end{equation}
where the functions $ \tau_{s,\epsilon}( \tilde X_i ) $ are modified bandwidth functions which estimate the sampling density up to a proportionality constant in the presence of noise. Details on these functions and the asymptotic properties of diffusion maps with the kernel in~\eqref{eqKVB2} are included in~\ref{appNoise}. 
 
Kernels in delay-embedding space have previously been used in conjunction with diffusion maps for correction of timing uncertainties in time-ordered data \cite{FungEtAl16}. As shown in Theorem~\ref{thmNoise} in~\ref{appNoise}, such kernels are also useful for denoising data corrupted by i.i.d.\ noise. In particular, the effect of i.i.d.\ noise is to introduce a random bias with positive expectation to the pairwise squared distances $ \lVert \tilde X_i - \tilde X_j \rVert^2 $. By the law of large numbers, as the number of delays $ s $ increases with a suitable scaling $ s(N) = o( N )$, the effect of that bias cancels in the diffusion maps normalization. As a result, the diffusion maps matrix, $ \tilde P $, constructed from the noisy data provides a consistent approximation of the integral operator $ \mathcal{ P }_\epsilon $ (see Section~\ref{secDataDrivenBasis}) up to an $ O( \epsilon^2) $ correction which does not affect the pointwise consistency of the Laplace-Beltrami operator approximated through $ ( I - \tilde P_\epsilon ) / \epsilon $. Note that the flatness of the metric $ \bar g $ established in Theorem~\ref{thmGBar} is important in deriving this result.  In summary, for sufficiently large $ s $ we can employ our Galerkin scheme for the Koopman eigenvalue problem in Algorithm~\ref{algGenerators} using clean basis functions determined through the eigenfunctions of $ \tilde P $. Theoretically, this approach allows for the removal of i.i.d.\ noise of arbitrarily large variance provided that arbitrarily many delays $ s $ can be used. In practice, the amount of variance that can be feasibly handled is limited by the total number of samples $ N $ (as we must have  $s \ll N $). Furthermore, if the system has nonzero Lyapunov exponents (a case that we are not treating here) adding delays leads to the eigenfunction bias issues discussed in Section~\ref{secDelay}.

Figures~\ref{figNoisyTorusZ1} and~\ref{figNoisyTorusZ2} show the generating Koopman eigenfunctions computed via the approach described above and in~\ref{appNoise} for the same variable-speed torus system as in Section~\ref{secTorus}, but with Gaussian noise added to the observed data in $ \mathbb{ R }^3 $. We consider two cases; one with moderate noise of standard deviation 0.1 added to each component of the observation vector (Fig.~\ref{figNoisyTorusZ1}), and one with strong noise of standard deviation 1 (Fig.~\ref{figNoisyTorusZ1}). In the moderate-noise case, we used a dataset of 64,000 samples sampled at the same time interval as in Section~\ref{secTorus}. In the strong-noise case the number of samples was 128,000 though reasonably good results can also be obtained using 64,000 samples. For ease of comparison, we used $ s = 800 $ delays in both cases, though in the moderate-noise the case as few as 20 delays are sufficient for denoising. Using the diffusion eigenfunctions from each case, we computed the generating Koopman eigenfunctions using Algorithm~\ref{algGenerators} with the same parameters as in Section~\ref{secTorus} except that in the strong-noise case we used $ n = 401 $ (as opposed to 1001) diffusion eigenfunctions to build the Galerkin approximation space.  

\begin{figure}
  \centering\includegraphics[scale=.8]{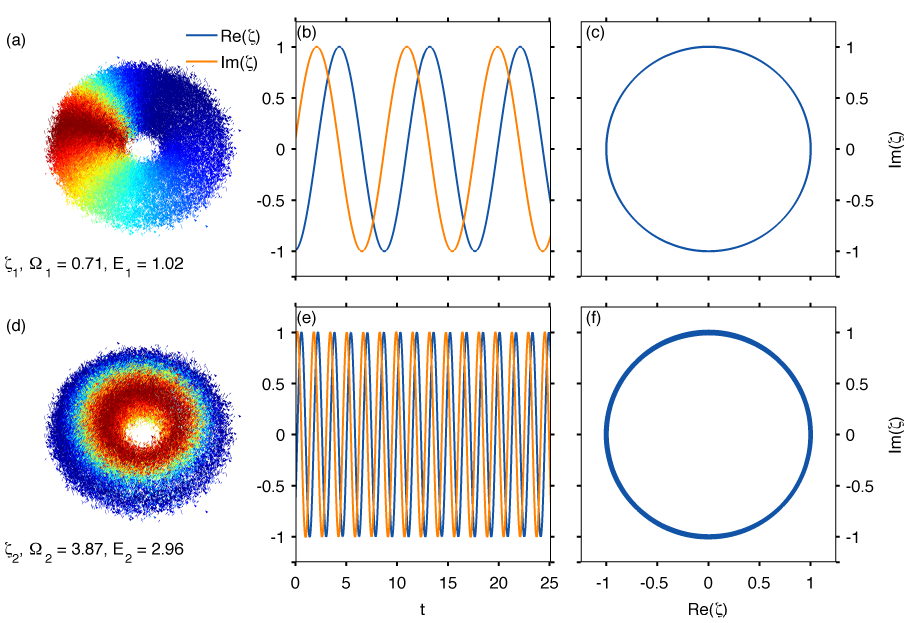}
  \caption{\label{figNoisyTorusZ1}The generating Koopman eigenfunctions $ \zeta_1 $ (a--c) and $ \zeta_2 $ (d--f) for the variable-speed dynamical system on the 2-torus computed using data corrupted by i.i.d.\ Gaussian noise with standard deviation 0.1 in each component of the observation vector in $ \mathbb{ R }^3 $. (a, d) Scatterplots of $ \Real( \zeta_i ) $ on the noisy torus; (b, e) time series of $ \Real \zeta_i $ and $ \Imag \zeta_i  $; (c, f) scatterplots of $ (\Real \zeta_i, \Imag \zeta_i ) $. The eigenfunctions shown here have not been rescaled to the unit circle---they are the result of Step~5 of Algorithm~\ref{algGenerators} executed with the modified kernel in~\eqref{eqKVB2} for $ s = 800 $ lags. The number of samples in this dataset is 64,000.}
\end{figure}
      
\begin{figure}
  \centering\includegraphics[scale=.8]{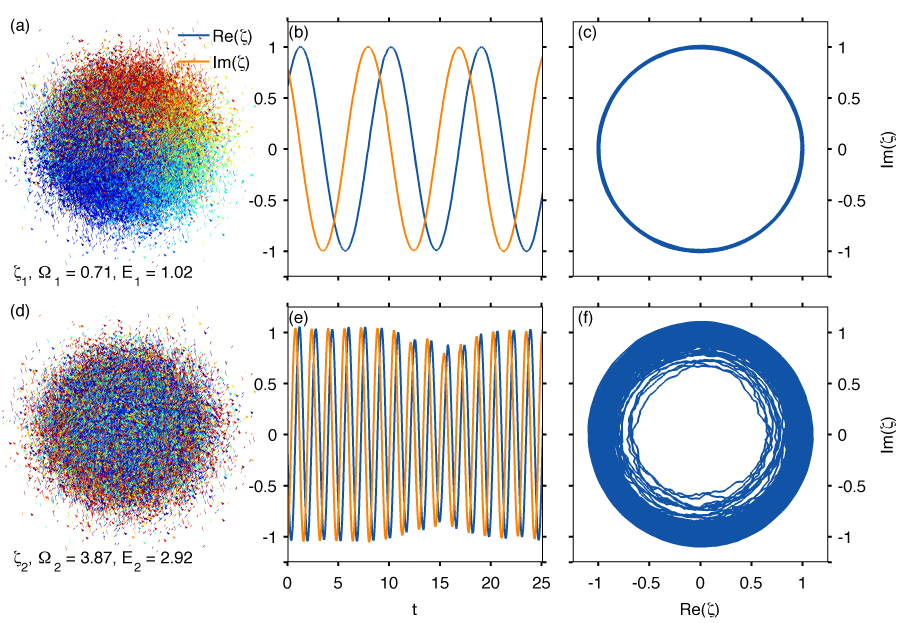}
  \caption{\label{figNoisyTorusZ2}As in Fig.~\ref{figNoisyTorusZ1}, but for noise standard deviation equal to 1 and number of observed samples equal to 128,000.}
\end{figure}

As is evident from Figs.~\ref{figNoisyTorusZ1} and~\ref{figNoisyTorusZ2}, with the modifications for noisy data described above, Algorithms~\ref{algBasis} and~\ref{algGenerators} successfully recover the generating Koopman eigenfunctions shown for the noise-free data in Fig.~\ref{figTorusZ}, in both the moderate- and strong-noise cases. In both cases, the Koopman frequencies $ \Omega_i $ agree to their counterparts computed from the noise-free data to within 3 significant figures. As expected from its small Dirichlet energy, eigenfunction $ \zeta_1 $ is particularly well recovered in both cases. Eigenfunction $ \zeta_2 $ exhibits a moderate amount of amplitude modulation in the strong-noise case, but it retains a nearly-monochromatic oscillation at the correct theoretical frequency. Koopman eigenfunction results analogous to Figs.~\ref{figNoisyTorusZ1} and~\ref{figNoisyTorusZ2} obtained via EDMD are shown in Figs.~2 and~3 in the SOM, respectively. There, it can be seen that while our approach and EDMD perform comparably  in the moderate-noise case, in the strong-noise case the quality of the numerical Koopman eigenfunctions obtained via our approach is higher.   

\section{\label{secTimeChange}Regularization by time change}

\subsection{\label{secTimeChangeOverview}Time change in dynamical systems}

Time change (e.g., \citep[][]{KatokThouvenot06}) is a technique in continuous-time dynamical systems which involves multiplying the vector field of a dynamical system by a positive function to create a new, orbit-equivalent system. For our purposes, we consider   a smooth ergodic dynamical system $ ( M, \mathcal{ B }, \nu, \Psi_t ) $ with vector field $ w $ and invariant probability measure $ \nu $ satisfying the assumptions stated in section~\ref{secErgodicity}, and a smooth, positive time-change function $ \psi $, bounded away from zero, yielding the smooth vector field $ v = \psi w $. The vector field $ v $ generates a flow $ \Phi_t $ on $ M $ having the same orbits as the original flow $ \Psi_t $ (since $ v $ and $ w$ are parallel), but time flows differently with respect to $ \Phi_t $ than it does with respect to $ \Psi_t $. In particular, for $ \Psi_\tau a = \Phi_t a $ we have $ \tau = \int_0^t \psi( \Phi_{t'} a ) \, d t' $. Moreover,  $ ( M, \mathcal{ B }, \mu, \Phi_t ) $ is ergodic for the invariant probability measure $ \mu $, absolutely continuous with respect to $\nu$, with $ C^\infty $ density 
\begin{equation}
  \label{eqNu}
  \varrho = \frac{ d \mu }{ d \nu } = \frac{  1 /\psi  }{  \int_M  d \nu / \psi }.
\end{equation}  
 
While time change preserves ergodicity, the same is not true for mixing---a theorem due to Ko\v{c}ergin \cite{Kocergin73} states that for any ergodic flow there exists a time change by a smooth function which renders it mixing. Heuristically, we can think of mixing by time change in the following way. Suppose that $ \Psi_t $ is non-mixing, and $ A \subset M $ is a measurable set that we picture as a coherent globular object. In the original  system, $ \Psi_t( A ) $ maintains its coherence in the course of dynamical evolution, but in the time-changed system parts of $ A $ will lag behind others. For a suitable time-change function, $ \Phi_t(A ) $ will become increasingly stretched and twisted, and its volume will become asymptotically equidistributed in $ M $ in accordance with the condition for mixing in Section~\ref{secErgodicity}. As a concrete example, which we will study with numerical experiments in Section~\ref{secMixing3Torus} ahead, Fayad \cite{Fayad02} shows that an irrational flow on the 3-torus becomes mixing through the analytic time-change function 
\begin{equation}
  \label{eqMixing3Torus}
  \psi( \theta^1, \theta^2, \theta^3 ) = 1 \\ + \Real \sum_{k=1}^\infty \sum_{\lvert l \rvert \leq k} \frac{ e^{-k }}{ k }   \left( e^{\ii k \theta^1} + e^{\ii k \theta^2 } \right) e^{\ii l \theta^3 },
\end{equation}
where $ \theta^\mu \in [ 0, 2 \pi ) $ are  canonical angle coordinates on $ \mathbb{ T }^3 $. Figure~\ref{figMixing3TorusX} shows $ \psi $ as a time series for the underlying irrational flow $ w = \sum_{i=1}^3 w^i \frac{\partial^i }{ \partial\theta^i} $ with frequencies $ ( \alpha_1, \alpha_2, \alpha_3 ) =  (w^1, w^2, w^3) = ( 1, 5^{1/2}, 10^{1/2} ) $. 
   
\subsection{\label{secTimeChangeForecasting}Dimension reduction and nonparametric forecasting with time change}

Traditionally, time change is used as a technique to improve mixing and stochasticity of dynamical systems. Here, our approach is to use time change as a regularization tool to reduce mixing and improve the spectral properties of the Koopman operators for dimension reduction and nonparametric forecasting. In essence, we will use the eigenfunctions of the time-changed system to decompose the original dynamical system into a collection of non-autonomous oscillators with variable frequency (as opposed to simple harmonic oscillators in pure point spectrum systems). We will also use these eigenfunctions to perform a vector field decomposition analogous to Theorem~\ref{lemmaVDecomp}, but the vector field components in this case will be non-commuting. Finally, we will generalize the forecasting methods of Section~\ref{secForecasting} to the time-changed framework. 

In what follows, $ ( M, \mathcal{ B }, \mu, \Phi_t ) $ will be an ergodic dynamical system with vector field $ v $ generating the observed data, and related to the system $ ( M, \mathcal{ B }, \nu, \Psi_t ) $ with  vector field $ w = \frac{ 1 }{ \psi } v $ via the smooth time-change function $ \psi $, bounded away from zero. As previously, we denote the Koopman operator and skew-adjoint generator of the first system by $ U_t $ and $ \tilde v $, respectively, and we also introduce the skew-adjoint generator $ \tilde w : D( \tilde w ) \mapsto L^2(M,\nu) $, $ D( \tilde w ) \subset L^2( M,\nu) $, associated with the second system. We use the notations $ \langle \cdot, \cdot \rangle_\nu $ and $ \lVert \cdot \rVert_\nu $ to distinguish the inner product and corresponding norm on $ L^2( M,\mu) $. Note that by the properties of $ \psi $, $ L^2( M,\mu) $ and $ L^2(M, \nu) $ (resp.\ $ D( \tilde v) $ and $ D( \tilde w ) $) are canonically isomorphic. As a result, we can identify $ \tilde v $ with  $ T_\psi \circ \tilde w $, where $ T_\psi $ is the bounded multiplication operator on $ L^2(M,\mu) $ by $ \psi $.

In what follows, our interest is in the case that $ \tilde w $ has pure point spectrum with basic frequencies $ \{ \Omega_i \}_{i=1}^m $ and the associated smooth  orthonormal eigenfunctions $ \{ \zeta_i \}_{i=1}^m $ on $ L^2(M,\nu) $. We denote the eigenvalues and eigenfunctions of $ \tilde w $ by $ \omega_k = \sum_{i=1}^m k_i \Omega_i $ and $ z_k = \prod_{i=1}^m \zeta_i^{k_i} $ with $ k = ( k_1, \ldots, k_m ) \in \mathbb{ Z}^m $. Because $ L^2( M, \mu ) \simeq L^2( M, \nu ) $, we can expand any $ f \in L^2( M,\mu) $ as $ f = \sum_k \hat f_k z_k $, where $ \hat f_k = \langle z_k, f \rangle_\nu = \langle  z_k, f / \varrho \rangle $. Equivalently, we have a Fourier operator $ \mathcal{ S } : L^2( M,\mu ) \mapsto \ell^2 $ such that $ \mathcal{ S } f = \hat f := ( \hat f_k )_k $. This operator is bounded, invertible, and with a bounded inverse, but unlike $ \mathcal{ U } $ from Section~\ref{secErgodicity}, it is not unitary. In particular, given $ f $ as above and $g \in L^2(M,\mu) $ with $ \hat g = \mathcal{S} g $, we have $ \langle f, g \rangle = \langle \hat f, \mathcal{ G } \hat g \rangle_{\ell^2} $, where $ \mathcal{ G } = ( \mathcal{ S} \mathcal{ S}^* )^{-1} $. In the canonical orthonormal basis of $ \ell^2 $, denoted here by $ \{ e_k \} $, the matrix elements of $ \mathcal{ G } $ are $ \langle e_k, \mathcal{ G } e_l \rangle_{\ell^2} = \langle z_k, z_l \rangle = \langle z_k, \varrho z_l \rangle_\nu $.  

Under $ \mathcal{ S } $, $ \tilde w $ becomes the skew-adjoint unbounded multiplication operator $  T_{\ii \omega} =  \mathcal{ S } \tilde w \mathcal{ S }^{-1} $ with $ T_{\ii \omega} ( \hat f_k )_k = \ii ( \omega_k \hat f_k )_k $, defined on the dense domain $ D( T_{\ii \omega} ) \subset \ell^2 $ such that $ D( T_{\ii\omega} ) = \{ ( \hat f_k )_k \in \ell^2 \mid (\omega_k f_k )_k \in \ell^2\} $. However, $ \tilde v $ is no longer transformed into a multiplication operator; instead, we have $ \hat v = \mathcal{ S } \tilde v \mathcal{ S }^{-1} = \mathcal{ H } T_{\ii\omega} $, where $ \mathcal{H} : \ell^2 \mapsto \ell^2 $ is the bounded operator $ \mathcal{H} = \mathcal{ S } T_\psi \mathcal{ S}^{-1} $. This result in conjunction with~\eqref{eqUEvolve} implies that the  Fourier representation $ \hat f_t = \mathcal{ S } U_t f $ of an observable $ f \in D( \tilde v)  $ is governed by the equation
\begin{equation}
  \label{eqUEvolveFourierTimeChange}
  \frac{ d\ }{ dt } \hat f_t = \hat v  \hat f_t.
\end{equation}
The following Proposition characterizes the evolution of the $ z_k $ under the dynamical system $ ( M, \mathcal{ B }, \mu, \Phi_t ) $.      

\begin{prop} \label{propPiProjT}Assume that $ \tilde w $ has pure point spectrum with  eigenfrequencies  $ \{ \omega_k \} $ and corresponding orthonormal  eigenfunctions $ \{ z_k \} $ on $ L^2( M, \nu ) $. Then, under evolution by the Koopman group $ \{ U_t \}_{t\in\mathbb{R}} $ on $ L^2( M,\mu) $, the following hold: 

  (i) The $ z_k $ evolve according to the equation
  \begin{displaymath}
    \frac{ d\ }{ dt } U_t z_k= \ii \omega_k T_{\psi_t}  U_t z_k,
  \end{displaymath}
  where $ T_{\psi_t} $ is the bounded multiplication operator on $ L^2( M,\mu) $ by  $ \psi_t = \psi \circ \Phi_t $. 

  (ii) For $ \mu $-a.e.\ $ a \in M $, the time series $ t \mapsto \tilde z_k( t ) = z_k( \Phi_t ( a ) ) $ evolves as a nonautonomous, variable-frequency harmonic oscillator, 
  \begin{equation}
    \label{eqZSHOTimeChange}
    \frac{ d \tilde z_k( t ) }{ d t} = \ii \omega_k \tilde \psi( t ) \tilde z_k( t ), \quad \tilde \psi( t ) = \psi_t(  a ), \quad \tilde z_k( t ) = e^{\ii \omega_k \int_0^t \tilde \psi( t' ) \, dt' } \tilde z_k( 0 ). 
  \end{equation} 
\end{prop} 

\begin{proof}
  (i) Since $ z_k \in D( \tilde w ) \simeq D( \tilde v ) $, it follows from~\eqref{eqUEvolve} that
  \begin{displaymath}
    \frac{ d\ }{ dt } U_t z_k = U_t v z_k  = U_t( \psi w z_k ) = U_t( \psi )  U_t( \ii \omega_k  z_k ),
  \end{displaymath}
  leading to the claim.
  
  (ii) The result follows directly from (i).

\end{proof}

Proposition~\ref{propPiProjT} indicates that if the dynamical system generating the data is related to a system with pure point spectrum by a time change, we can still use the Koopman eigenfunctions of the pure point spectrum system for dimension reduction and forecasting, retaining a number of the desirable features of the techniques of Section~\ref{secPurePointSpectra}. In particular, we can still define projection maps $ \pi_i : M \mapsto \mathbb{ C } $ as in~\eqref{eqPiProj} using a generating set of eigenfunctions $ \{ \zeta_i \}_{i=1}^m $ of $ \tilde w $ with minimal Dirichlet energy. Under these maps, the image of the state in $ M $ still evolves as as an oscillator with constant amplitude, but now the frequency is nonconstant; compare, in particular, \eqref{eqZSHO} with~\eqref{eqZSHOTimeChange}. Due to the time-dependence of the oscillatory frequency, the vector field $ v $ will in general not be projectible under $ \pi_{i*} $. However, the composite map $ \pi = ( \pi_1, \ldots \pi_m ) $ provides an embedding  of $M $ into $ \mathbb{ C }^m $, and $ v $ is projectible under $ \pi_* $.  The following Lemma, which we state without proof, describes the analog of the vector field decomposition in Theorem~\ref{lemmaVDecomp} for time-changed systems.

\begin{lemma} \label{lemmaVDecompT} Let $ \tilde w$ be as in Proposition~\ref{propPiProjT}, and let $ w = \sum_{i=1}^m w_i $ be the associated vector field decomposition from Theorem~\ref{lemmaVDecomp}. Then, $ v $ admits the decomposition $ v = \sum_{i=1}^m v_i $ into the nowhere-vanishing, linearly independent, $ \mu $-preserving vector fields $ v_i = \psi w_i $.
\end{lemma}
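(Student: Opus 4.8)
The plan is to obtain all four assertions---the decomposition identity, the nowhere-vanishing and linear-independence properties, and the $ \mu $-invariance of the generated flows---directly from the corresponding facts for $ w = \psi^{-1} v $ and its decomposition $ w = \sum_{i=1}^m w_i $ supplied by Theorem~\ref{lemmaVDecomp}, using only that $ \psi \in C^\infty( M ) $ is strictly positive. The decomposition is immediate from linearity of scalar multiplication on vector fields: $ v = \psi w = \psi \sum_{i=1}^m w_i = \sum_{i=1}^m \psi w_i = \sum_{i=1}^m v_i $. For the pointwise properties, note that at each $ a \in M $ multiplication by the scalar $ \psi( a ) \neq 0 $ is an invertible linear map of $ T_a M $; hence $ v_i \rvert_a = \psi( a ) w_i \rvert_a \neq 0 $ since the $ w_i $ are nowhere-vanishing by Theorem~\ref{lemmaVDecomp}(ii), and a relation $ \sum_i c_i v_i \rvert_a = 0 $ forces $ \sum_i c_i w_i \rvert_a = 0 $, whence $ c_i = 0 $ by the linear independence of $ \{ w_i \rvert_a \} $ from the same part of Theorem~\ref{lemmaVDecomp}.

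The only substantive point is $ \mu $-invariance of the flows $ \Phi_{i,t} $ generated by the $ v_i $, which (as in the proof of Theorem~\ref{lemmaVDecomp}(iii)) is equivalent to $ \divr_\mu v_i = 0 $ everywhere on $ M $. Applying Theorem~\ref{lemmaVDecomp}(iii) to the pure point system $ ( M, \mathcal{ B }, \nu, \Psi_\tau ) $ and its generator decomposition $ w = \sum_i w_i $ gives $ \divr_\nu w_i = 0 $. To transfer this to $ \divr_\mu $, I would use the elementary identity $ \divr_\mu( \rho X ) = \rho \, \divr_\nu X $, valid for any vector field $ X $ and any positive density $ \rho $ with $ d\nu = \rho \, d\mu $; this follows at once from the local-coordinate expression for the divergence in~\eqref{eqGradDiv} (or from $ \mathcal{ L }_X $ acting on the volume forms $ d\mu $ and $ d\nu $). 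With $ \rho = \psi / \bar\psi $ from~\eqref{eqNu} this yields $ \divr_\mu v_i = \bar\psi \, \divr_\mu( \rho w_i ) = \bar\psi \rho \, \divr_\nu w_i = 0 $, as required. Equivalently, one recognizes $ v_i = \psi w_i $ as the time change of $ w_i $ by the factor $ \psi^{-1} $ and applies~\eqref{eqNu}, using $ \int_M \psi^{-1} \, d\nu = \bar\psi^{-1} $ to identify the associated invariant measure as precisely $ \mu $; however, since~\eqref{eqNu} is phrased for ergodic systems while $ w_i $ is not ergodic, the divergence computation above is the cleaner route.

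The main obstacle, such as it is, amounts to bookkeeping with the change-of-measure factor in the divergence; there is no genuine analytic difficulty, and no compactness or regularity beyond what is already assumed for $ \psi $ and $ M $ enters. I would also note explicitly, to match the discussion preceding the lemma, that no commutativity is being claimed: a direct expansion using $ [ w_i, w_j ] = 0 $ gives $ [ v_i, v_j ] = \psi^2 [ w_i, w_j ] + \psi\bigl( w_i( \psi ) w_j - w_j( \psi ) w_i \bigr) = \psi\bigl( w_i( \psi ) w_j - w_j( \psi ) w_i \bigr) $, which generically fails to vanish precisely because the derivatives $ w_i( \psi ) $ need not be zero; this is the source of the couplings and time-dependent frequencies referred to in the surrounding text.
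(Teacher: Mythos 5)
Your argument is correct, and in fact the paper offers nothing to compare it against: the lemma is explicitly stated without proof ("The following Lemma, which we state without proof, ..."). Your route—pointwise transfer of the nowhere-vanishing and independence properties through multiplication by the positive scalar $ \psi $, plus Theorem~\ref{lemmaVDecomp}(iii) applied to the pure point system $ ( M, \mathcal{ B }, \nu, \Psi_\tau ) $ combined with the change-of-measure identity $ \divr_\mu( \rho X ) = \rho \, \divr_\nu X $ for $ d\nu = \rho \, d\mu $—is the natural argument the paper presumably had in mind, and your caution in preferring the divergence computation over invoking~\eqref{eqNu} (which is phrased for ergodic flows, whereas the $ w_i $ are not ergodic) is well placed. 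The closing commutator computation $ [ v_i, v_j ] = \psi( w_i( \psi ) w_j - w_j( \psi ) w_i ) $ is also correct and matches the paper's remark that the components are generally non-commuting after time change.
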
 

Unlike the $ w_i $, the vector fields $ v_i $ are  non-commuting due to the presence of $ \psi $, which means that in general we cannot view them as generators of independent dynamical processes. Nevertheless, the $ v_i $ can still be reconstructed in data space using a pushforward operation analogous to~\eqref{eqVPushZeta}, with the difference that the observation map $ F $ is now expressed using the Fourier operator $ \mathcal{ S }$ instead of $ \mathcal{ U }$; that is, 
\begin{equation}
  \label{eqVPushT}
   V_i := F_* v_i = v_i( F ) = \psi \mathcal{ S }^{-1} T_{\ii \omega^{(i)} } \hat F =\sum_k \ii k_i \Omega_i \psi \hat F_k z_k, \quad V = F_* v = \sum_{i=1}^m V_i,
\end{equation}
where $ \hat F = ( \hat F_k )_k = \mathcal{ S } F $, and $ T_{\ii \omega^{(i)} } : D(T_{\ii \omega^{(i)} }) \mapsto \ell^2 $ is the unbounded multiplication operator by $ ( \ii k_i \Omega_i )_k $.   

We proceed similarly to modify the forecasting scheme of Section~\ref{secForecasting}. As in that Section, we are interested in  the density $ \rho_t = d \mu_t / d \mu $ of the time-dependent measure $ \mu_t = \Phi_{t*} \mu $ relative to $ \mu $ given the initial data $ \rho_0 $. The evolution of this density is still governed by the Perron-Frobenius operator, i.e., we have $ \rho_t =U^*_t \rho_0 = U_{-t} \rho_0 $, but in this case we compute the non-unitary transform $ \hat \rho_t = \mathcal{ S } \rho_t $, and seek the solution of 
\begin{equation}
  \label{eqRhoTimeChange}
  \frac{ d \hat \rho_t }{ d t } = -\hat v \hat \rho_t.
\end{equation}
This equation is the analog of~\eqref{eqUEvolveFourierTimeChange} for the Peron-Frobenius operator. Given a solution $\hat \rho_t $, the time-dependent expectation value of an observable $ f = \mathcal{S }^{-1} \hat f  $ can be computed from an analogous expression to~\eqref{eqMeanForecast0}, with some modification due to the lack of unitarity of $ \mathcal{S } $, viz.
\begin{equation}
  \label{eqMeanForecastT}
  \bar f_t = \mathbb{ E }_{\mu_t} f = \int_M f \rho_t d\mu = \langle \rho_t, f \rangle = \langle \hat \rho_t, \mathcal{ G } \hat f \rangle_{\ell^2}.
\end{equation}

Unlike Section~\ref{secForecasting}, a closed-form solution of~\eqref{eqRhoTimeChange} is generally not available due to the presence of the non-multiplicative operator $ \mathcal{H} $ in $ \hat v $. Nevertheless, we can obtain a sequence $ \rho_{0,t}, \rho_{1,t}, \ldots $ of approximate solutions by forming the sequence of bounded operators $ \hat v_l = \Pi_l \hat v \Pi_l $, where $ \Pi_l $ are orthogonal projectors to the $ ( 2 l + 1 ) $-dimensional subspaces of $ \ell^2 $ spanned by $ \mathcal{ S } z_{k_1}, \ldots, \mathcal{ S }_{k_l} $, where the $ z_{k_i} $ are ordered in order of increasing Dirichlet energy as in the schemes of Section~\ref{secGalerkinImplementation} (note that we use $ 2 l + 1 $ eigenfunctions since the Dirichlet energies of $ z_k $ and $ z^*_k = z_{-k} $ are equal). We then compute 
\begin{equation}
  \label{eqRhoLT}
  \hat \rho_{l,t} = e^{-t \hat v_l } \hat \rho_{l,0}, \quad e^{-t \hat v_l } = \sum_{k=0}^\infty \frac{ t^k }{ k! } \hat v_l^k, 
\end{equation}
where the series expansion for the exponential is well-defined since $ \hat v_l $ is bounded. Note that~\eqref{eqRhoLT} describes the evolution of $ l $ linearly coupled harmonic oscillators. If the vector field $ - v $ is replaced by the advection diffusion operator $ L'_\varepsilon = - v - \varepsilon \upDelta_h $ (i.e., the dual to $ L_\varepsilon $ in~\eqref{eqL}) for some $ \varepsilon > 0 $, then it follows from classical results in semidiscrete approximation schemes for linear parabolic partial differential equations \citep[][]{FujitaSuzuki91} that the corresponding solutions $ \rho_{l,t} = \mathcal{ S }^{-1} \hat \rho_{l,t} $ converge as $ l \to \infty $ to the contraction semigroup solution $ e^{t L'_\varepsilon} \rho_0 $ associated with $ L'_\varepsilon $. A study on prediction with $ L'_\varepsilon $, including issues related to the choice of diffusion operator $ \upDelta_h $ and/or regularization parameter $ \varepsilon $, is beyond the scope of this work. In Section~\ref{secApplicationsT}, we will see that predictions with the raw $ \rho_{l,t} $ from~\eqref{eqRhoLT} perform well at least over short to medium lead times, though there are biases on longer lead times which may be related to the absence of diffusion in our scheme.      

\subsection{\label{secImplementationT}Data-driven implementation}

The analysis in Section~\ref{secTimeChangeForecasting} shows that time-change transformations can extend the applicability of Koopman eigenfunction techniques to certain systems that do not posses nonconstant eigenfunctions. Of course, the systems in question have special structure, namely they are related to pure point spectrum systems via time change. However, even in such cases, taking advantage of this special structure from data is challenging without prior information about the time-change function $ \psi $. In this section, we present a time-change scheme that employs  an empirically accessible time-change function. This method can recover $ \psi $ in special cases, and thus can transform the system into a new system which is more amenable to Koopman eigendecomposition. 

Let $ \xi = \lVert v \rVert_g =\sqrt{ g( v, v ) } $ be the norm of the dynamical vector field with respect to the ambient space Riemannian metric $ g $. By the assumptions stated in section~\ref{secErgodicity}, this quantity is smooth and non-negative, and here we also assume that it is bounded away from zero so that it can be used as a time change function to obtain the smooth vector field $ \hat w = \xi^{-1} v  $. In the special case that $ v $ is indeed related to a system with pure point spectrum by a time change with function $ \psi $, then it follows from Lipschitz equivalence of Riemannian metrics on compact manifolds that there exists a constant $ 0 < C < \infty $ such that $ \xi/ C \leq \psi \leq C \xi $. Moreover, there exits a metric $ g $ (e.g., the flat metric on the torus) such that $ \xi = \psi $. 

A key property of $ \xi $ is that it can be approximated from time-ordered data using finite differences. In particular, we have $ \xi^2 = V \cdot V $, where $ V = F_* v $ is the pushforward of $ v $ in data space, and we can approximate $ V $ via finite differences as described in Section~\ref{secErgodicity}. Our approach is to use $ \xi $ as an empirically accessible time-change function, and consider $ \hat w $ as a candidate vector field with well-behaved eigenfunctions on the Hilbert space $ L^2( M, \hat \nu ) $ with $ d \hat \nu/ d\mu = \xi  / ( \int_M \xi \, d\mu ) $. In what follows, we will carry out the vector field decomposition in Theorem~\ref{lemmaVDecomp} and the statistical forecasts in~\eqref{eqMeanForecastT} and~\eqref{eqRhoLT} using $ \xi $ as the time-change function instead of $ \psi $.   

Proceeding in direct analogy with the methods of Section~\ref{secGalerkin}, we compute approximate eigenfunctions of $ \hat w $ by approximating this operator in a basis of eigenfunctions of a Laplace-Beltrami operator for a suitable Riemannian metric $ \hat h $. In this case, the metric should have volume measure equivalent to $ \hat \nu $, so we consider the conformal transformation $ \hat h = ( \xi \sigma  )^{2/m} g $, where, as before, $ \sigma $ is the density of the invariant measure $ \mu $ relative to the Riemannian measure $ \vol_g $ of the ambient space metric. We approximate the Laplace-Beltrami operator for this geometry using diffusion maps with the modified kernel (cf.~\eqref{eqKVB}) 
\begin{equation}
  \label{eqKVBT}\hat K_\epsilon( x, y ) = \exp\left( - \frac{ \lVert x - y \rVert^2 }{ \epsilon  ( \xi( x ) \hat\sigma_\epsilon( x ) )^{-1/m} ( \xi( y) \hat\sigma_\epsilon( y ))^{-1/m }( y )  } \right)
\end{equation}
and the same normalization as Algorithm~\ref{algBasis}. Note that in this case the sampling density $ q = d\mu / d\hat \nu$ (i.e., the density of the measure $ \mu $ from which data are sampled relative to the measure $ \hat \nu $ with respect to which we seek to build an orthonormal basis) is nonuniform, but the effects of $ q $ are removed as $ \epsilon \to 0 $ via normalization as described in Section~\ref{secDataDrivenBasis}. Applying Algorithm~\ref{algBasis} with the kernel in~\eqref{eqKVBT}, we obtain an orthonormal basis of $ L^2( M, \hat \nu ) $ from the Laplace-Beltrami eigenfunctions $ \{ \hat \phi_i \} $ associated with $ \hat h $. We then construct the associated basis $ \{ \hat \varphi_i \} $ of $ H_1( M, \hat \nu ) $ following~\eqref{eqBasisH1}, and compute approximate eigenvalues and eigenfunctions of $ \hat w $ from the eigenvalue problem for the advection-diffusion operator 
\begin{equation}
  \label{eqLHat}
  \hat L_{\varepsilon} = \hat w -  \varepsilon \upDelta_{\hat h} 
\end{equation}
with an analogous algorithm to Algorithm~\ref{algGenerators}. 

Our vector field decomposition and statistical forecasting techniques with time change can then be carried out using Algorithms~\ref{algVDecomp} and~\ref{algStatisticalForecast} with appropriate modifications to take into account the nonunitarity of the Fourier operator $\mathcal{S}$.  Instead of giving full listings for these modified algorithms, we indicate below the required changes.   
\begin{itemize}
\item The eigenfunctions $ \{ u_i \} $ and eigenvalues $ \{ \gamma_i \} $ are for the time-changed operator $ \hat L_{\varepsilon} $. Similarly, the discrete inner product weights $ w_i $ are for the diffusion eigenfunction basis $ \{ \hat \phi_i \} $.   
  \item In Step~3 of Algorithm~\ref{algVDecomp}, the vector field components $ V_i $ are now given by $ V_i = \ii \hat F G^{-1} \bar \Omega_i z^\top \diag \xi $ in accordance with~\eqref{eqVPushT}, where $ \diag \xi $ is an $ N \times N $ diagonal matrix whose $ i $-th diagonal entry is set to the value of the time-change function at the $ i $-th data sample, $ \xi(x_i) $. 
  \item In Algorithm~\ref{algStatisticalForecast}, we also compute the $ n \times n $ matrices $ G' = z^\dag z / N $  and $ H =z^\dag \diag w \diag \xi z $  approximating the bounded operators $ \mathcal{ G}  $ and  $ \mathcal{ H } $, respectively, in the $ \{ e_k \} $ basis of $ \ell^2 $. 
  \item In Step~3(a) of Algorithm~\ref{algStatisticalForecast}, the expansion coefficients for the density are now given by $ \tilde \rho( t_i ) = \exp( - \ii G^{-1} H \diag \omega t_i ) \tilde \rho_0 $. Similarly, in Step~3(c), the expectation value of the prediction observable now becomes $ \bar f( t_i ) = \hat f ( G^{-1} G' \tilde \rho( t_i ) )^\dag $.
\end{itemize}
  
\subsection{\label{secApplicationsT}Applications}

\subsubsection{\label{secMixing3Torus}Mixing flow on the 3-torus}

As a first application of our dimension reduction and nonparametric forecasting techniques with time change, we study the mixing system on $ M = \mathbb{ T }^3 $ in \cite{Fayad02} with the time-change function $ \psi $ in~\eqref{eqMixing3Torus}, using the underlying irrational flow with angular frequencies $ ( \alpha_1, \alpha_2, \alpha_3 ) = ( 1, 5^{1/2}, 10^{1/2} ) $. For this set of experiments, we generated a time series consisting of $ N = \text{512,000} $ samples in $ \mathbb{ R }^6 $ taken at a timestep $ T = 0.01 $ with the canonical embedding $ F( a ) = ( x^1, \ldots, x^6 ) = ( \cos \theta^1, \sin \theta^1, \cos \theta^2, \sin \theta^2, \cos \theta^3, \sin \theta^3 ) $. With this embedding, the torus inherits the flat metric $ g $, and we have $ \lVert v \rVert_g = \psi ( \alpha_1^2 + \alpha_2^2 + \alpha_3^2 )^{1/2} $. Therefore, in this case, $ \lVert v \rVert_g = \xi $ is  proportional to the true time-change function $ \psi $, and the method described in Section~\ref{secImplementationT} is guaranteed to recover an irrational flow on $ \mathbb{ T }^3 $. This application is therefore a ``best-case scenario''  for the techniques of Section~\ref{secImplementationT} to perform well, but nevertheless it remains challenging for data-driven eigendecomposition and forecasting techniques due to mixing dynamics.   

Figure~\ref{figMixing3TorusX} shows representative time series from this system together with the vector field  norm $ \lVert v \rVert_g = \xi \propto \psi  $. Qualitatively, $ \xi $ exhibits a series of intermittent spikes whose amplitude is modulated by a low-frequency envelope. These variations in the speed of the flow, whose maxima and minima differ by more than a factor of 10, produce phase-modulated wavetrains in the observed time series (see Fig.~\ref{figMixing3TorusX}(a, b)). As a result, the invariant measure $ \mu $ has highly nonuniform density relative to the Haar measure on the 3-torus.

\begin{figure}
  \centering\includegraphics{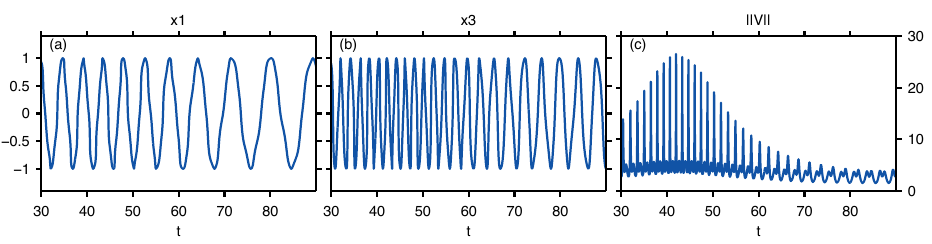}
  \caption{\label{figMixing3TorusX}Time series from the mixing system on the 3-torus. (a, b) Observation map components $ x^1 $ and $ x^3 $, respectively, in the flat embedding of the 3-torus in $ \mathbb{ R }^6 $; (c) magnitude $ \lVert V \rVert = \lVert v \rVert_g $ of the dynamical vector field with respect to the ambient-space metric. In this case, $ \lVert V \rVert $ is proportional to the time-change function $ \psi $ producing mixing.}  
\end{figure}

For comparison, we begin by solving the eigenvalue problem for the generator using the method in Section~\ref{secGalerkin} which does not involve time change. Because the system is mixing and $ \tilde v $ has no eigenfunctions, the behavior of the eigenfunctions of the regularized operator $ L_{\varepsilon} $ is expected to deviate significantly from the simple harmonic oscillator patterns arising in systems with pure point spectra, and as shown in Fig.~\ref{figMixing3TorusX} this is indeed the case. The eigenfunctions in Fig.~\ref{figMixing3TorusX}, which were computed for the diffusion regularization parameter $ {\varepsilon} = 3 \times 10^{-4} $, include one with a near-periodic time series (Fig.~\ref{figMixing3TorusZNoT}(a)) that appears to capture the low-frequency modulating envelope of the time change function (Fig.~\ref{figMixing3TorusX}(c)). However, other eigenfunctions (Fig.~\ref{figMixing3TorusZNoT}(b, c)) exhibit strong amplitude modulation---a manifestation of the fact that advection and diffusion interact non-trivially in $ L_{\varepsilon} $ in the presence of mixing, even for small $ \varepsilon $.  Note that applying no regularization ($\varepsilon=0$) results in significantly less coherent patterns than those in Fig.~\ref{figMixing3TorusZNoT}.   

\begin{figure}
  \centering\includegraphics[scale=.95]{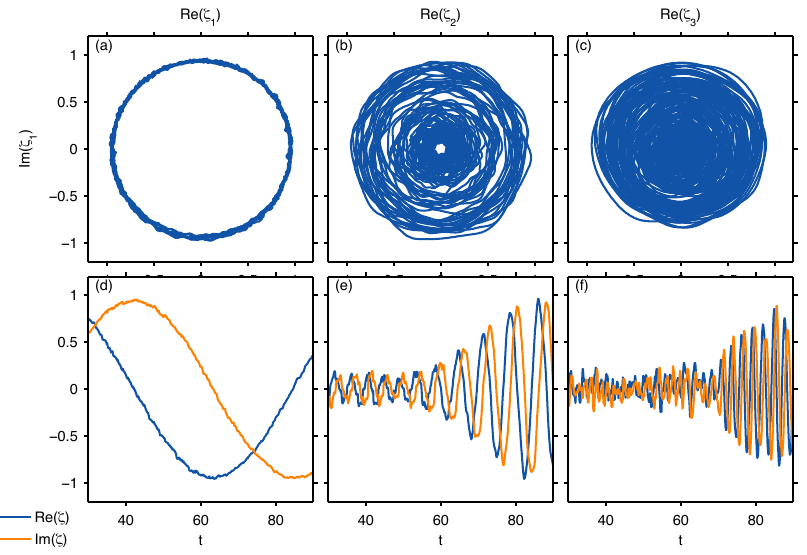}
  \caption{\label{figMixing3TorusZNoT}Eigenfunctions of the regularized generator $ L_\varepsilon $ for the mixing system on the 3-torus without time change. (a) Near-periodic pattern; (b, c) amplitude-modulated patterns.}
\end{figure}

Next, we discuss the results obtained via the time-change technique of  Section~\ref{secImplementationT}. As shown in Fig.~\ref{figMixing3TorusZ}, the generating eigenfunctions $ \{ \zeta_1, \zeta_2, \zeta_3 \} $ from this method lie on the unit circle to a good approximation (though with somewhat less fidelity than the eigenfunctions for the 2-torus system in Fig.~\ref{figTorusZ}), and, as expected from Proposition~\ref{propPiProjT}, the corresponding time series have the structure of phase-modulated wavetrains. The basic frequencies associated with the generating eigenfunctions are $ \{ \Omega_1, \Omega_2, \Omega_3 \} = \{ 0.5587, 0.2498, 0.7902 \} $; these values agree to within three significant figures with the frequencies $ ( \hat \alpha_1, \hat \alpha_2, \hat \alpha_3 ) = ( 1, 5^{1/2}, 10^{1/2} ) / 4 \approx ( 0.2500, 0.5590, 0.7906 ) $, of the irrational flow $ \hat w = \lVert v \rVert_g^{-1} v = \sum_{i=1}^3 \hat \alpha_i \frac{ \partial \  }{ \partial \theta^i } $. Based on this identification, we expect the eigenfunctions corresponding to the $ \Omega_i $ to have the structure $ \zeta_i = e^{\ii \theta_i} $ and to have equal Dirichlet energies $ E(\zeta_i) $ in the flat metric. Indeed, the numerical Dirichlet energies for the eigenfunctions in Fig.~\ref{secImplementationT} are $ E(\zeta_1) = 1.264 $, $ E(\zeta_2) = 1.266 $, and $ E(\zeta_3) = 1.28 $. The real parts of the numerical eigenvalues are $ \Real \gamma_i \approx 5 \times { 10 }^{- 4} $ in all three cases, indicating that the effects of diffusion in the regularized operator $ \hat L_{\varepsilon} $ are minimal after time change.

\begin{figure}
  \centering\includegraphics[scale=.9]{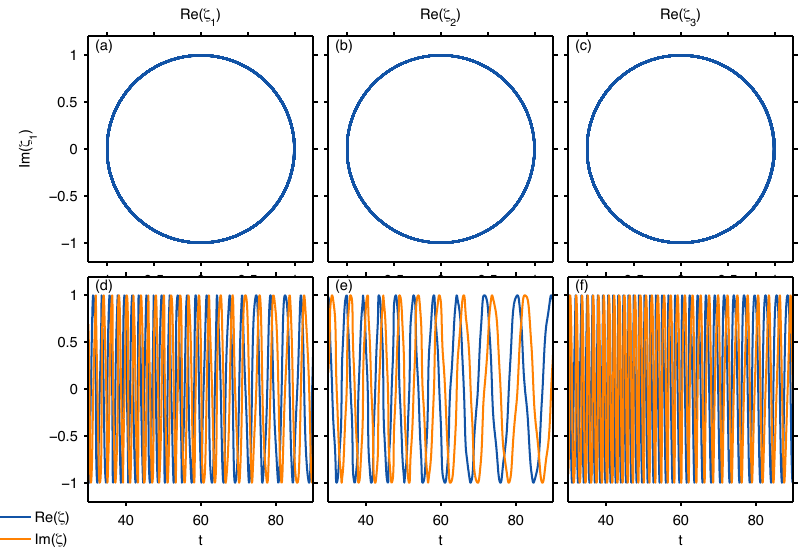}
  \caption{\label{figMixing3TorusZ}Generating eigenfunctions $ \zeta_i $ for the mixing system on the 3-torus with time change. (a--c) Mapping of the 3-torus to the circles $ ( \Real \zeta_i, \Imag \zeta_i ) $ on the complex plane; (d--f) time series of the real and imaginary parts of the eigenfunctions exhibiting time-dependent frequency (cf.\ the eigenfunctions in Fig.~\ref{figTorusZ} for the 2-torus system with pure point spectrum).}
\end{figure}

Figure~\ref{figMixing3TorusV} displays the vector field decomposition from Lemma~\ref{lemmaVDecompT} for this system using a moderately small spectral order parameter $ l = 5 $ (see Algorithm~\ref{algVDecomp}), i.e., $ n = ( 2 l + 1 )^3 = 1331 $ Koopman eigenfunctions in total. The vector field components are reconstructed in the six-dimensional data space, and then projected to three-dimensional periodic boxes for visualization. At this spectral order of approximation, the reconstruction error in $ \sum_{i=1}^3 V_i $ is $ 1 \% $ in $ \mathbb{ R }^6 $, but note that the visualizations in Fig.~\ref{figMixing3TorusV} are subject to additional projection errors, and therefore appear noisier than the native vector fields in $ \mathbb{ R }^6 $. Nevertheless, the plots in Fig.~\ref{figMixing3TorusV}(b--d) illustrate clearly that $ V_1 $, $ V_2 $, and $ V_3 $ generate flows along the $ \theta^2 $, $ \theta^1 $, and $ \theta^3 $ directions, respectively, which is consistent with the identification with the $ \hat w $ system made earlier.
  
\begin{figure}
  \centering\includegraphics{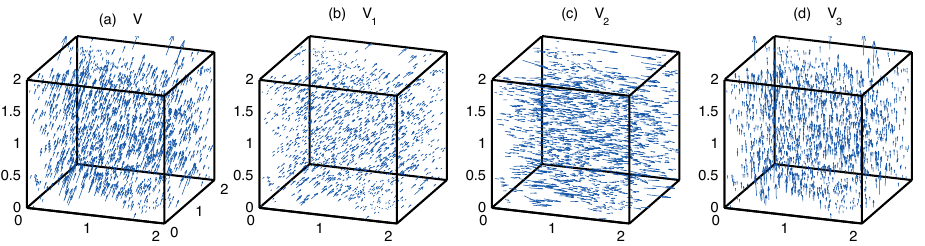}
  \caption{\label{figMixing3TorusV}Vector field decomposition for the mixing dynamical system on the 3-torus. (a) Full vector field $ v $; (b--d) vector field components $v_i $ from Lemma~\ref{lemmaVDecompT}. All vector fields are visualized in a periodic box with the $ x $, $ y $, and $ z $ axes corresponding to the normalized angles $ \theta^1 / \pi $, $ \theta^2/\pi $, and $ \theta^3 / \pi $, respectively, on $ \mathbb{ T }^3 $. To create these plots, the vector fields were first reconstructed in data space, $ \mathbb{ R }^6 $, via~\eqref{eqVPushT} and then projected to the periodic box via the inverse derivative map $ \partial \theta^i / \partial x^j $. This projection step is needed because~\eqref{eqVPushT}  cannot be applied to reconstruct the vector fields directly in the periodic box (in that case, the map $ F : \mathbb{ T }^3 \mapsto \mathbb{ R }^3 $ would be discontinuous). Because $ \partial \theta^i / \partial x^j $ has singularities, the reconstructed arrow plots are more noisy than the natively reconstructed vector fields in $ \mathbb{ R }^6 $.}
\end{figure}

Next, we turn to nonparametric forecasting of this system. We set the initial probability measure $ \mu_0 $ to a measure with isotropic circular Gaussian density $ \sigma $ relative to the Haar measure on $ \mathbb{ T }^3 $, viz.\ $ \sigma( \theta^1, \theta^2, \theta^3 ) = \exp( \kappa \sum_{i=1}^3 \cos( \theta^i - \bar \theta^i ) ) / ( I_0( \kappa ) )^3 $ which is the three-dimensional analog of~\eqref{eqVonMises}. Taking into account the density of the invariant measure relative to the Haar measure, our initial probability measure has  density $ \rho_0 = d \mu_0 / d \mu = \sigma  \psi / ( \int_M \psi \, d\mu) $ relative to $ \mu $. Here, we use the location and concentration parameters $ \bar \theta^i = \pi $ and $ \kappa = 30 $. Also, we take the components $ \{ x^1, x^3, x^5 \} $ of the observation map as our forecast observables.  

Figure~\ref{figMixing3TorusForecast} displays forecast results for the mean and standard deviation of these observables obtained via the method in Section~\ref{secImplementationT} for $ l = 13 $ ($n=\text{19,683}$) and an ensemble forecast with 10,000 particles using the perfect model. Because this system is mixing, expectation values with respect to the time-dependent probability measures $ \mu_t $ converge to expectation values with respect to $ \mu $. This late-time behavior is clearly evident in the ensemble forecast results, but it is not well emulated by the nonparametric model consisting of a finite collection of coupled oscillators. That is, the late-time behavior of this mixing deterministic system involves the generation of arbitrarily small lengthscales in the densities $ \rho_t $, and as a result any diffusion-free algorithm that attempts to replicate this behavior via a finite collection of deterministic oscillators will ultimately fail at late times. In Fig.~\ref{figMixing3TorusForecast}, the nonparametric model is able to accurately track the mean and standard deviation of the ensemble forecast over times less than the the equilibrium relaxation time of the true model, but eventually develops spurious oscillations. The latter are possibly due to aliasing effects in the finite-bandwidth representation of $ \rho_t $, and could be suppressed at a fixed time $ t $ by increasing $ l $. However, unless diffusion is added, failure to relax to the correct equilibrium should always occur at long-enough times for fixed $ l $.  Note that the consequences of this shortcoming may not be particularly detrimental if the timescale at which the nonparametric model deviates from the perfect model is comparable or exceeds the intrinsic predictability limits for the given initial distribution and forecast observable (as is the case in Fig.~\ref{figMixing3TorusForecast}).

\begin{figure}
  \centering\includegraphics[scale=.75]{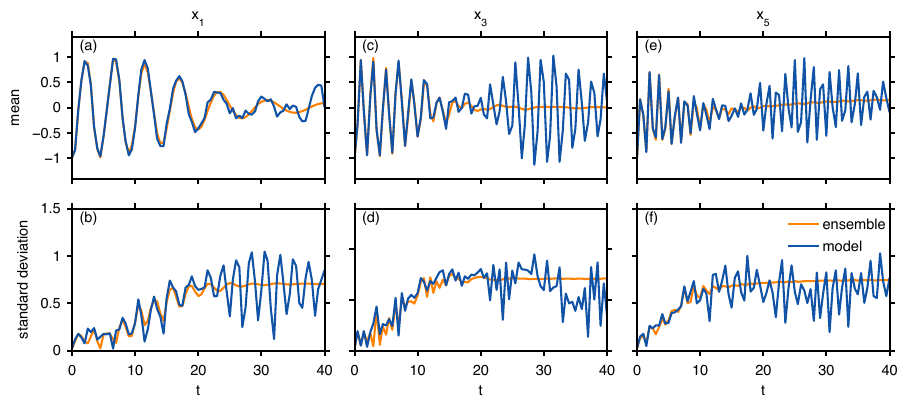}
  \caption{\label{figMixing3TorusForecast}Nonparametric and ensemble forecasts of the components $x^1$  (a, b), $x^3 $ (c, d), and $ x^5 $ (e, f) of the flat embedding into $ \mathbb{ R }^6 $ for the mixing dynamical system on the 3-torus. The initial probability measure has circular Gaussian density relative to the Haar measure with location and concentration parameters $ ( \bar \theta^1, \bar \theta^2, \bar \theta^3 ) = ( \pi, \pi, \pi ) $ and $ \kappa = 30 $, respectively. The nonparametric forecast is performed via Algorithm~\ref{algStatisticalForecast}, modified for time change as described in Section~\ref{secImplementationT}. The ensemble forecast is based on 10,000 independent samples drawn from the initial probability measure and evolved using the perfect model.}
\end{figure}

\subsection{\label{secFixedPointTorus}Ergodic flow on the 2-torus with a fixed point}

In this set of experiments, we consider the dynamical system on the 2-torus with the vector field $ v = \sum_{i=1}^2 v^i \frac{ \partial \  }{ \partial \theta^i } $, where
\begin{equation}
  \label{eqFixedPointTorus}
  v^1 = v^2 + ( 1 - \alpha ) ( 1 - \cos \theta^2 ), \quad v^2 = \alpha( 1 - \cos( \theta^1 - \theta^2 ) ),
\end{equation}
and $ \alpha $ is an irrational frequency parameter. Originally introduced by Oxtoby \cite{Oxtoby53},  flows of this class have a fixed point $ b $ with coordinates $ \theta^1 = \theta^2 = 0 $,  and are ergodic for the Haar measure on the 2-torus.\footnote{The convention in \cite{Oxtoby53} is that the frequency parameter lies in the unit interval; the cases with $ \alpha > 1 $ are equivalent to systems with frequency $ 1/\alpha $ up to an unimportant change of sign in $ v^1 $ and a constant time change.} Indeed, it is straightforward to check that $ \divr_\mu v $ vanishes everywhere on $ \mathbb{ T }^2 $ for the Haar measure, which is a necessary and sufficient condition for the flow to be $ \mu $-preserving. Due to the presence of the fixed point, this system is not uniquely ergodic, but the Haar measure is its only invariant Borel probability measure for which $ \mu( b ) = 0 $. Besides the Haar measure, its only other ergodic Borel probability measure is the trivial measure with $ \mu( b ) = 1 $. The system is topologically conjugate to a class of dynamical systems on the 2-torus called Stepanoff flows. The latter have the structure of a time-changed linear flow as in Section~\ref{secTimeChangeOverview}, but contain a single fixed point where the time change function diverges. Systems of this type are known to be topologically mixing, but we have not been able to find results on their measure-theoretic mixing properties apart from a conjecture in \cite{Oxtoby53} that a class of Stepanoff flows that includes~\eqref{eqFixedPointTorus} is mixing. In summary, the system specified in~\eqref{eqFixedPointTorus} does not in itself have the structure of a time changed flow, but is connected to a (singular) time changed-system system via a continuous transformation. Our objective in this section is to demonstrate how the time-change methods of Section~\ref{secImplementationT} behave in this more general context.     

Qualitatively, the orbit of a point $ a \neq b $ under~\eqref{eqFixedPointTorus} will pass by the fixed point at arbitrarily small distances, but by incompressibility, the trajectory develops ``bumps'' and circumvents the fixed point when it comes close to it. Because the speed of the flow can become arbitrarily slow near the fixed point, the time series of observables of this system exhibit complex behavior over a broad range of timescales, with intervals of rapid evolution separated by quasi-stationary periods. This behavior is illustrated in Figure~\ref{figFixedPointTorusX} for the frequency $ \alpha = 20^{1/2} $ and the standard (flat) embedding of the 2-torus in $ \mathbb{ R }^4 $, $ F( a ) = ( \cos \theta^1, \sin \theta^1, \cos\theta^2, \sin \theta^2 ) $. In what follows, we discuss dimension reduction and forecasting results for this system and observation map using a time series of 128,000 samples initialized at $ ( \theta^1, \theta^2 ) = ( \pi/2, \pi/2 ) $ and sampled at a timestep $ T = 0.01 $.

\begin{figure}
  \centering\includegraphics[scale=.9]{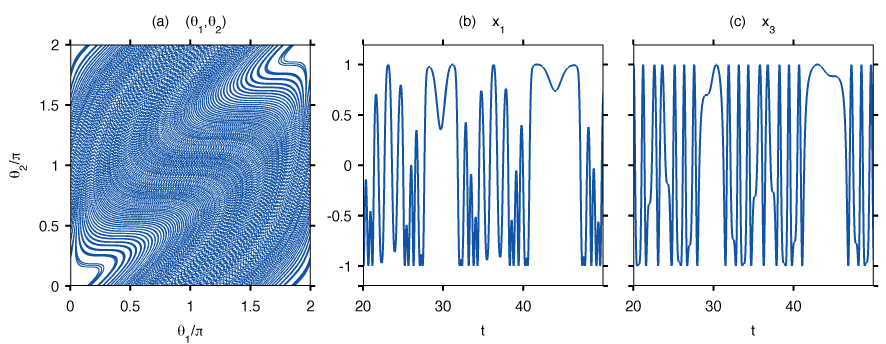}
  \caption{\label{figFixedPointTorusX}Time series for the fixed-point system on the 2-torus. (a) Phase space diagram on a periodic box for 64,000 samples; (b, c) components $ x^1 = \cos\theta^1$ and $ x^3 = \cos\theta^2 $ of the standard (flat) embedding $ F : \mathbb{T}^2 \mapsto \mathbb{R}^4 $. }
\end{figure}

First, we note that the methods of Section~\ref{secGalerkinImplementation} with no time change fail in the initial diffusion maps stage, as the Laplace-Beltrami eigenfunctions computed via Algorithm~\ref{algBasis} are corrupted by series of spikes (a hallmark of ill conditioning of the heat kernel matrix $ P$). We experimented with different kernels, tuning procedures, and normalizations (including the standard $ \alpha = 1/2 $ normalization of diffusion maps that requires no density estimation), but in all cases the quality of the eigenfunctions was poor. This ill-conditioning is likely caused by the behavior of the system near the fixed point, where the sampling density through finite-time trajectories has a singular, ``one-dimensional'' structure (see Fig.~\ref{figFixedPointTorusX}(a)), even though the asymptotic sampling density is uniform with respect to the Haar measure. On the other hand, after time change by the empirically accessible phase-space speed function $ \xi $, the quality of the eigenfunctions from diffusion maps improves markedly. We attribute this improvement to the modified Riemannian metric $ \hat h $ from Section~\ref{secImplementationT}. This metric becomes degenerate near the fixed point where $ \xi $ vanishes, assigning arbitrarily small norm to all tangent vectors (this can also be seen from the fact that the kernel in~\eqref{eqKVBT} assigns near-maximal affinity to all pairs of points with small corresponding $ \xi$). Therefore, the heat kernel associated with $ \hat h $ produces stronger averaging (smoothing) near the fixed point resulting in a well-behaved eigenfunction basis which is crucial to the success of the techniques of Section~\ref{secImplementationT}. 

In what follows, we work with the approximate eigenfunctions for the time-changed vector field computed using the advection-diffusion operator $ \hat L_{\varepsilon} $ from~\eqref{eqLHat} for the regularization parameter $ \varepsilon = 0.02 $. We selected this value as a reasonable compromise between bias error and smoothness of the computed eigenfunctions after testing various candidate values of $ \varepsilon $ in the interval $ 10^{-4} $ to $ 10^{-1} $. As shown in Fig.~\ref{figFixedPointTorusZ}, the generating eigenfunctions $ \{ \zeta_1, \zeta_2 \} $ for this value of $ \varepsilon $ do not lie on the unit circle with the same accuracy as the earlier results in Figs.~\ref{figTorusZ} and~\ref{figMixing3TorusZ}. Nevertheless, the eigenfunctions lie on a narrow annulus about the unit circle, and the corresponding time series have the structure of phase-modulated waves with weak amplitude modulation. The basic frequencies and Dirichlet energies are $ \{ \Omega_1, \Omega_2 \} = \{ 0.735, 0.165 \} $ and $ \{ E(\zeta_1), E(\zeta_2) \} =  \{ 1.54, 2.42 \} $. The eigenfunction time series exhibit timescale separation, with $ \zeta_1 $ evolving at faster timescales than $ \zeta_2 $, but this timescale separation has a time-dependent nature in the sense that both time series evolve slowly near the fixed point. The timescale separation between $ \zeta_1 $ and $ \zeta_2 $ is also evident from the scatterplots on the torus in Fig.~\ref{figFixedPointTorusZ}(a, d). There, it can be seen that the level sets of $ \zeta_2 $ are aligned with the orbits of the dynamics, whereas the level sets of $ \zeta_ 1$ are transverse to the dynamics resulting to rapid oscillations due to frequent level-set crossings. As discussed in the SOM, EDMD implemented with a dictionary consisting of lags of the state vector fails to recover Koopman eigenfunctions of comparable quality to those in Fig.~\ref{figFixedPointTorusZ}. In particular, as shown in Fig.~4 in the SOM, the EDMD spectrum contains an eigenfunction that somewhat resembles eigenfunction $ \zeta_2 $ in Fig.~\ref{figFixedPointTorusZ}, but is significantly more noisy. Moreover, we did not find evidence of an EDMD eigenfunction analogous to $ \zeta_1 $ which varies predominantly in the direction along the flow as opposed to directions transverse to the flow.

\begin{figure}
  \centering\includegraphics[scale=.8]{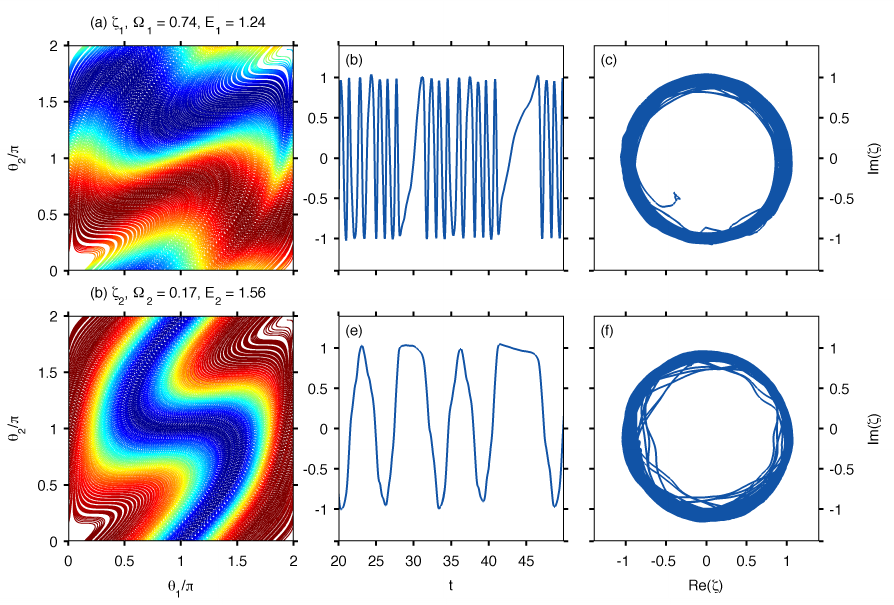}
  \caption{\label{figFixedPointTorusZ}Generating eigenfunctions $ \zeta_1 $ (a--c) and $ \zeta_2 $ (d--f) for the fixed point system on the 2-torus obtained via the time-change technique. (a, d) Scatterplots of $ \Real( \zeta_i ) $; (b, e) time series of $ \Real( \zeta_i ) $; (c, f) scatterplots of $ ( \Real( \zeta_i ), \Imag( \zeta_i ) ) $. The imaginary parts of $ \zeta_i $ are not shown in (b, e) for clarity, but to a good approximation they are $ 90^\circ $ phase-shifted versions of  $ \Real \zeta_i $ apart from near fixed points where that phase relationship holds less accurately.}   
\end{figure}

Figure~\ref{figFixedPointTorusZ} shows the vector field decomposition from Lemma~\ref{lemmaVDecompT} associated with the generating eigenfunctions from Fig.~\ref{figFixedPointTorusV}. This decomposition was performed using the bandwidth parameter $ l = 30 $, and similarly to the 3-torus example of Section~\ref{secMixing3Torus}, the results were projected to a periodic box on the plane for visualization. With this choice of bandwidth, the time-averaged reconstruction error of the full vector field was 7\%. Qualitatively, the vector field component $ V_1 $ corresponding to $ \zeta_1 $ describes a flow which is primarily directed along the $ \theta^2 $ direction, apart from a band centered around the $ \theta^1 = \theta^ 2 $ line where the flow turns towards the negative $ \theta^1 $ direction and its magnitude is diminished. On the other hand, the flow described by the component $ V_2 $ has significant magnitude along that band, together with inflow (outflow) from the $ \theta_2 > \theta_1 $ ($ \theta_2 < \theta_1 $) portions of the torus.       

\begin{figure}
  \centering\includegraphics{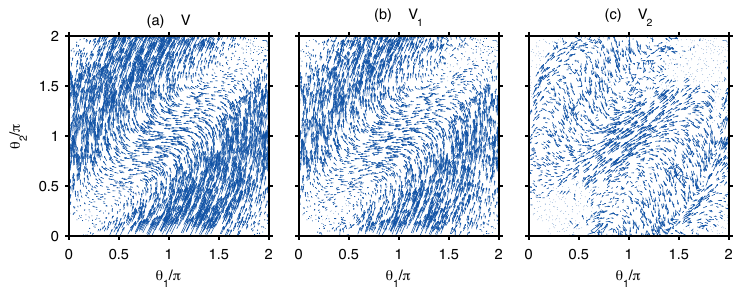}
  \caption{\label{figFixedPointTorusV}Vector field decomposition for the fixed-point system on the 2-torus. (a) Full vector field; (b, c) vector field components from Lemma~\ref{lemmaVDecompT}. All vector fields are visualized in a periodic box for the angles $ (\theta^1, \theta^2) $. As with Fig.~\ref{figMixing3TorusV}, the vector fields were first reconstructed in data space, $ \mathbb{ R }^4 $, and subsequently projected to the periodic domain. As a result, the arrow plots in (b, c) are noisier than the native vector fields in $ \mathbb{ R }^4 $.}
\end{figure}

Turning now to nonparametric forecasting, we use an initial probability measure $ \mu_0 $ from the same von-Mises family as the irrational flow example of Section~\ref{secIrrational}. That is, $ \mu_0 $ has the density function in~\eqref{eqVonMises} relative to the Haar measure, and in this case we set the location and concentration parameters to $ ( \bar \theta^1, \bar \theta^2 ) = ( \pi, \pi ) $ and $ \kappa = 30 $, respectively. The dynamic evolution of the density for the time interval $ [ 0, 10 ] $ is illustrated with snapshots in Fig.~\ref{figFixedPointTorusRho} and as a video in~Movie~2. With the chosen mean and concentration parameter, the initial density function is concentrated away from the fixed at point at $ ( 0, 0 ) $. In the course of dynamical evolution, the density function wanders close to the fixed point and becomes increasingly stretched. Eventually (around $ t =3.5 $), the density function appears to break up into disconnected components, but it is possible that these are artifacts caused by truncation to a finite number of basis functions and/or errors in the generating eigenfunctions and basic frequencies.

\begin{figure}
  \centering\includegraphics[scale=1]{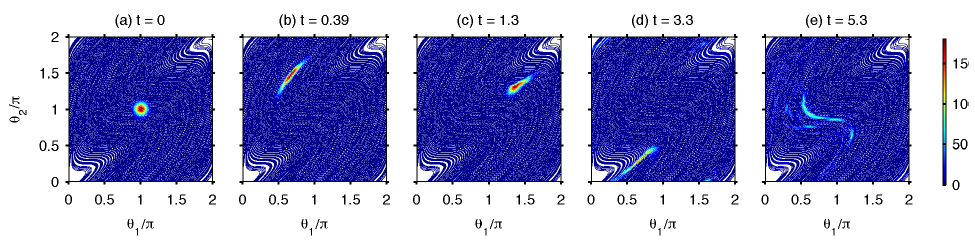}
  \caption{\label{figFixedPointTorusRho}Snapshots of the time-dependent probability density $ \rho_t $ relative to the equilibrium (Haar) measure for the fixed-point system on the 2-torus. (a) At initialization time, the density is strongly concentrated around the point $ ( \pi, \pi ) $; (b, c) at early times, most of the probability mass is located away from the fixed point at $ ( 0, 0 ) $; (d) the peak of the density function approaches the fixed point and becomes highly stretched; (e) eventually, the numerical density function breaks up into disconnected components.}
\end{figure} 
 
To assess the forecast skill of this nonparametric model, we compare its predictions against ensemble forecast obtained from 10,000 independent samples drawn from $ \rho_0 $ and evolved using the true model from~\eqref{eqFixedPointTorus}, taking the observation-map components $ x^1 = \cos \theta^1 $ and $ x^3 = \cos \theta^2 $ as our forecast observables. As shown in Fig.~\ref{figFixedPointTorusForecast}, over the time interval $ [ 0, 10 ] $ the mean forecast from the nonparametric model is in good agreement with the ensemble forecast. The nonparametric model also provides a reasonable uncertainty quantification through the predicted standard deviation, but as with the previous experiments the deviations from the ensemble forecast are greater for the standard deviation than the mean. In summary, the time-change approach is particularly effective in this class of systems both in terms of the quality of the diffusion maps basis (which is useful in other contexts besides the Koopman operators studied here), but also as a regularization tool that transforms the dynamical system to an orbit-equivalent system with improved spectral properties.

\begin{figure}
  \centering\includegraphics[scale=.7]{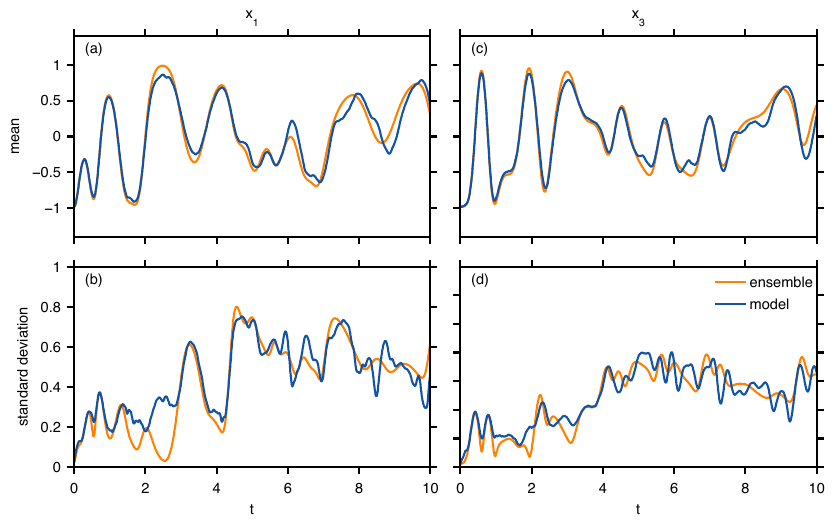}
  \caption{\label{figFixedPointTorusForecast}Nonparametric and ensemble forecasts of the mean and standard deviation of the components $ x^1 $ (a, b) and $ x^4 $ (c, d) of the canonical (flat) embedding of the 2-torus in $ \mathbb{ R }^4 $ for the fixed-point system. The initial probability measure has the circular Gaussian density in Fig.~\ref{figFixedPointTorusRho}(a). The nonparametric forecast is performed via Algorithm~\ref{algStatisticalForecast}, modified for time change as described in Section~\ref{secImplementationT}. The ensemble forecast is based on 10,000 independent samples drawn from the initial probability measure and evolved using the perfect model.}
\end{figure}

\section{\label{secConclusions}Concluding remarks}
 
In this work, we have developed a family of of data analysis techniques for dimension reduction, mode decomposition, and nonparametric forecasting of data generated by ergodic dynamical systems. Our approach is based on the Koopman and Perron-Frobenius formalisms for nonlinear dynamical systems, where the central objects of study are groups of unitary operators governing the evolution of observables and probability measures. In certain classes of systems (in particular, systems with pure point spectra), the spectral properties of these operators naturally lead to algorithms for dimension reduction and mode decomposition of spatiotemporal data, featuring timescale separation and strong invariance properties under changes of observation modality. The Perron-Frobenius operators also provide algorithms for equation-free forecasting of probability measures and expectation values of observables. Here, we develop these algorithms using a representation of the generator of the Koopman group in a complete orthonormal basis of the $ L^2 $ space of the dynamical system, acquired from time-ordered data  through the diffusion maps algorithm \cite{CoifmanLafon06}.  Placed in context, this work has connections with methods for mode decomposition and model reduction based on both the Koopman \cite{MezicBanaszuk04,Mezic05,Mezic13,SchmidSesterhenn08,RowleyEtAl09,ChenEtAl12,Schmid10,BudisicEtAl12,JovanovicEtAl14,TuEtAl14,HematiEtAl15,WilliamsEtAl15,KutzEtAl16,ProctorEtAl16,BudisicMezic12,BruntonEtAl17,ArbabiMezic16} and Perron-Frobenius \cite{DellnitzJunge99,DellnitzEtAl00,FroylandDellnitz03,Froyland05,Froyland07,Froyland08,FroylandPadberg09,FroylandEtAl10,FroylandEtAl10b,SchutteEtAl10,FroylandEtAl14,FroylandEtAl14b} perspectives, since the complete orthonormal basis learned from the data via diffusion maps allows us to pass between the two representations using standard linear algebra operations. Also, our approach has close connections with the nonparametric forecasting technique introduced in \citep[][]{BerryEtAl15}, which employs data-driven representations of the analogs of the Koopman and Perron-Frobenius operators in stochastic dynamical systems on manifolds (as opposed to the generator which is the object of focus in this work) in a similar basis from diffusion maps.

The ability to work in a complete orthonormal basis for the $ L^2 $ space of the dynamical system with a well-defined notion of smoothness has a number of advantages which enable us to (1) construct nonlinear dimension reduction maps based on Koopman eigenfunctions with projectible dynamics and small roughness on the data manifold; (2) decompose the dynamical vector field into a sum of mutually commuting vector fields, which we reconstruct in data space through a spectral representation of the pushforward map for vector fields on manifolds; (3) improve the efficiency and noise robustness of Galerkin methods for the Koopman eigenvalue problem through delay-coordinate maps; (4) predict the time-evolution of arbitrary probability densities and the expectation values of observables. These techniques perform best in the setting of systems with pure point spectra, where they lead to a decomposition of nonlinear dynamical systems into  uncoupled simple harmonic oscillators. We demonstrated the efficacy of these methods in numerical experiments with variable-speed flows on the torus having multiple timescales, large contrasts of the sampling density in ambient data space, and strong i.i.d.\ observational noise. We also established an explicit connection between Koopman operators for systems with pure point spectra and the Laplace-Beltrami operators approximated by diffusion maps applied to delay-coordinate mapped data, providing a  justification of the timescale separation seen in diffusion coordinates \cite{BerryEtAl13} for this class of systems.    

Another objective of this work has been to study and improve the regularity of numerically approximated eigenvalues and eigenfunctions of the Koopman group, particularly for mixing dynamical systems where the generator has no nonconstant eigenfunctions. In systems with pure point spectra, we demonstrated that adding a small amount of diffusion to the generator in an appropriate basis tailored to $ H_1 $ regularity eliminates oscillatory eigenfunctions with large Dirichlet energy for the given observation modality. This type of regularization is important even in simple systems such as irrational flows, since  generator may have no isolated eigenvalues, and thus highly oscillatory eigenfunctions may lie near (in the sense of the corresponding eigenvalues) eigenfunctions with low roughness.

In mixing systems, rather than regularizing the  generator by diffusion alone (which is known to impart singular changes \cite{ConstantinEtAl08,FrankeEtAl10} that are difficult to analyze in generality), we followed a different approach inspired by time-change methods in dynamical systems \cite{Fayad02,KatokThouvenot06}. In particular, we developed a strategy that involves rescaling the generator by the norm of the dynamical vector field in the ambient data space, and using the eigenvalues and eigenfunctions of the rescaled generator for dimension reduction and forecasting. This transformation is empirically computable from time-ordered data, and preserves the orbits of the dynamics while changing the flow of time along the orbits. In special cases, the transformation formally recovers a system with pure point spectrum from a class of time-changed mixing systems, but can be broadly used as an ad hoc regularization tool. We constructed analogs of our vector field decomposition and nonparametric forecasting techniques in the time-changed setting, where non-commuting vector fields and coupled oscillators encode the time-change function producing mixing. We demonstrated this approach in applications to a mixing flow on the 3-torus \cite{Fayad02}, and a challenging ergodic flow on the 2-torus with a fixed point \cite{Oxtoby53} where diffusion maps fails to produce a well-conditioned basis if no time change is applied.                
  
There is a number of areas for future research stemming from this work. First, the methods and applications discussed in the paper are heavily focused on dynamical systems on tori, so they should be investigated in more general classes of systems. In particular, we expect the techniques developed here to be useful in dynamical systems with mixed spectra, i.e., systems possessing invariant subspaces of their full $ L^2 $ space spanned by a (non-complete) set of eigenfunctions of the Koopman group. Also, it would be useful to study connections between Koopman operators and diffusion maps applied in delay-coordinate space for systems with nonzero Lyapunov exponents. Similarly, the time-change regularization strategy based on the norm of the dynamical vector field could be extended to other transformations having the goal of mapping the system under study to a system with improved spectral properties. At a more operational level, the algorithms formulated in this paper are all based on spectral expansions in global  bases of $ L^2 $ spaces, and these bases are generally inefficient in representing localized objects such as probability densities. It would therefore be fruitful to explore applications of multiscale bases (e.g., \cite{CoifmanMaggioni06,AllardEtAl12}) as alternatives to the global $ L^2 $ bases used here and in \cite{BerryEtAl15}. We plan to study these topics in future work.  
        
\section*{Acknowledgments}

The author would like to thank Tyrus Berry, Peter Constantin, Shuddho Das, John Harlim, Fanghua Lin, Andrew Majda, Lai-Sang Young, and Zhizhen Zhao for stimulating conversations on this work. This research was supported by DARPA grant HR0011-16-C-0116, NSF grant DMS-1521775, ONR grant N00014-14-1-0150, ONR MURI grant 25-74200-F7112, and ONR YIP grant N00014-16-1-2649.  

\appendix

\section{\label{appAlgorithms}Algorithms}

In this Appendix, we list the spectral decomposition and forecasting algorithms developed in Sections~\ref{secGalerkinImplementation} and~\ref{secTimeChange}.

Algorithm~\ref{algBasis} summarizes the construction of our data-driven orthonormal basis of $ L^2( M, \mu ) $ via diffusion maps.  This algorithm also includes a summary of the kernel density estimation and bandwidth-parameter tuning procedures. These procedures require a set $ \{ \epsilon_l \} $ of candidate bandwidth parameters and a nearest-neighbor truncation parameter $ k_\text{nn} $ as additional inputs; see \cite{BerryHarlim16,BerryEtAl15} for complete descriptions. We note that the content of Algorithm~\ref{algBasis} and the other algorithms in the paper are  intended to be at a conceptual level, and some of the steps would be implemented differently in practice to ensure efficiency. In particular, in applications with large sample numbers $ N $, we can truncate $ P $  to $ k \ll N $ nearest neighbors (incurring a small loss of accuracy by virtue of the exponential decay of $ K_\epsilon $). Also, it is customary for efficiency and stability to obtain the eigenvalues and eigenvectors of $ P $ from the eigenproblem for the symmetric matrix $ S = D^{1/2} P D^{-1/2} $, where $ D $ is an $ N \times N $ diagonal matrix with $ D_{ii} = \tilde d_i $ defined in Step~3 of the main calculation phase of Algorithm~\ref{algBasis}. The eigenfunctions of $ P $ and the inner product weights $ w $ can be computed from the eigenvectors of $ S $ through the relationships in the last step of Algorithm~\ref{algBasis}. We refer the reader to \cite{CoifmanLafon06,BerryHarlim16,BerrySauer16b,BerryEtAl15,Giannakis15} for further details on the numerical implementation and error estimates for this class of kernel algorithms.

\begin{alg}[Data-driven orthonormal basis]\label{algBasis}\ 
  \begin{itemize}
  \item Inputs 
    \begin{itemize} 
    \item Observed time series $ \{ x_i \}_{i=0}^{N-1} $, $ x_i \in \mathbb{ R }^d $, at sampling interval $ T $
    \item Candidate bandwidth parameter values $ \{ \epsilon_l \} $ with $ \epsilon_l = 2^l $ 
    \item Number of nearest neighbors $ k_\text{nn} $ for kernel density estimation
    \item Desired number of eigenvalues and eigenfunctions $ n $
    \end{itemize}
  \item Outputs
    \begin{itemize}
    \item Sampling densities $ \hat \sigma = ( \hat\sigma_\epsilon( x_0 ), \ldots, \hat \sigma_{\epsilon}(x_{N-1} ) )  $ relative to the Riemannian measure
    \item Estimated manifold dimension $ \hat m $
    \item Laplace-Beltrami eigenfunctions $ \{ \phi_0, \phi_1, \ldots, \phi_{n-1} \} $, $ \phi_i \in \mathbb{ R }^N $, and the corresponding eigenvalues $ \{ \eta_0, \eta_1, \ldots, \eta_{n-1} \} $, $ \eta_i \geq 0 $
    \item Inner product weights $ w \in \mathbb{ R }^N $
    \end{itemize}
  \item Density estimation phase
    \begin{enumerate}
    \item For each $ x_i $, compute the ad-hoc bandwidth function $ r^2_i = \sum_{j=2}^{k_\text{nn}} \lVert x_i - x_{I(i,j)} \rVert^2 / ( k_\text{nn} -1 )  $, where $ I( i,j ) $ is the index of the $ j $-th nearest neighbor of $ x_i $ in the dataset. 
    \item For each $ \epsilon_l $, compute the sum $ \Sigma_l = \sum_{i,j=0}^{N-1} \tilde K_{\epsilon_l}( x_i, x_j ) / N^2 $ for the kernel
      \begin{displaymath}
        \tilde K_{\epsilon_l}( x_i, x_j ) = \exp( - \lVert x_i - x_j \rVert^2 / ( \epsilon_l r_i r_j ) ).
      \end{displaymath}
    \item Choose the bandwidth parameter $ \epsilon \in \{ \epsilon_l \}  $ that maximizes $ \Sigma'_l = ( \log \Sigma_{l+1} - \log \Sigma_l ) / ( \log \epsilon_{l+1} - \log \epsilon_l ) $. The estimated manifold dimension is $ \hat m = 2 \Sigma'_l $.
    \item With the $ \epsilon $ and $ \hat m $ from Step~3, compute the sampling density 
      \begin{displaymath}
        \hat\sigma_\epsilon(x_i) = \sum_{j=0}^{N-1} \tilde K_\epsilon( x_i, x_j ) / [ N ( \pi \epsilon  r_i^2 )^{\hat m/2} ].
      \end{displaymath}
    \end{enumerate}  
  \item Main calculation phase
    \begin{enumerate}
    \item Select the bandwidth parameter $ \epsilon $ for the kernel $ K_\epsilon $ in~\eqref{eqKVB} using the same method as Steps~2 and~3 of the density estimation phase.
    \item Compute the vector $ \hat q \in \mathbb{ R }^N $ with $ \hat q_i = \sum_{j=0}^{N-1} K_\epsilon( x_i, x_j ) $ and the $ N \times N $ matrix $ \tilde K $ with $ \tilde K_{ij} =K_\epsilon( x_i, x_j ) / \hat q_j $.
    \item Compute the vector $ \tilde d \in \mathbb{ R }^N $ with $ \tilde d_i = \sum_{j=0}^{N-1} \tilde K_{ij} $ and the $ N \times N $ symmetric matrix $ S $ with  $ S_{ij} = \tilde K_{ij} / ( \tilde d_i \tilde d_j )^{1/2} $. $ S $ is related to $ P $ via the similarity transformation $ S = D^{1/2} P D^{-1/2} $, where $ D $ is the $ N \times N $ diagonal matrix with $ D_{ii} = \tilde d_i $.   
    \item Solve the eigenvalue problem $ S \tilde \phi_i = \kappa_i \tilde \phi_i $ for $ i \in \{ 0, 1, \ldots ,n-1 \} $ and $ \tilde \phi_i = ( \tilde \phi_{0i}, \ldots, \phi_{N-1,i} ) \in \mathbb{ R }^N $. Normalize the $ \tilde\phi_i $ to unit 2-norm.
    \item Set the eigenvalues to $ \eta_j = \log \kappa_j / \log \kappa_1 $, the inner product weights to $ w = ( w_0, \ldots, w_{N-1} )  \in \mathbb{ R }^N $ with $ w_i = \tilde \phi^2_{i0} $, and the eigenfunctions to $ \phi_j = ( \phi_{0j}, \ldots, \phi_{N-1,j} ) $ with $ \phi_{ij} = \tilde \phi_{ij} / \tilde \phi_{j0} $.
    \end{enumerate}
  \end{itemize}
\end{alg}       

Note that Algorithm~\ref{algBasis} is not too sensitive to the choice of the nearest neighbor parameter $ k_\text{nn} $ and the candidate bandwidth parameter values. Hereafter, we will always work with the values $ k_\text{nn} = 8 $ and $ \epsilon_l = 2^l $ with $ l \in \{ - 30, -29.9, \ldots, 9.9, 10 \} $, which were also used in \cite{BerryEtAl15}.

Algorithm~\ref{algGenerators} summarizes the numerical procedure to compute the generating frequencies and eigenfunctions of the Koopman group.

\begin{alg}[Generating frequencies and eigenfunctions]\label{algGenerators}\ 
  \begin{itemize}
  \item Inputs
    \begin{itemize}
    \item Diffusion eigenvalues $ \{ \eta_i \}_{i=0}^{n-1} , $ eigenfunctions $ \{ \phi_i \}_{i=0}^{n-1}$, and inner product weights $ w $ from Algorithm~\ref{algBasis} 
    \item Number of basic frequencies $ m $ (may be set to the estimated  dimension $ \hat m $ from Algorithm~1)
    \item Number of Koopman eigenfunctions to be computed, $ n' \leq n $
    \item Regularization parameter $ \varepsilon $
    \end{itemize}
  \item Outputs
    \begin{itemize} 
    \item Generating frequencies $ \{ \Omega_i \}_{i=1}^m $, $ \Omega_i \in \mathbb{ C } $, and eigenfunctions $ \{ \zeta_i \}_{i=1}^m $, $ \zeta_i \in \mathbb{ C }^N $ 
    \end{itemize}
  \item Execution steps
    \begin{enumerate}
    \item Compute the $ H_1 $-normalized eigenfunctions $ \{ \varphi_i \}_{i=0}^{n-1} $, $ \varphi_i \in \mathbb{ R }^ N $, using~\eqref{eqBasisH1}.
    \item Using the $ \{ \varphi_i \}_{i=0}^{n-1} $, compute the $ n \times n $ matrices $ D $, $ B $, and $ V $ from~\eqref{eqMatEig}, and form the matrix $ A = V  - \varepsilon D $. 
    \item Solve the generalized eigenvalue problem in~\eqref{eqGEV}; obtain the eigenvalues $ \{ \gamma_i \}_{i=1}^{n'} $, $ \gamma_i \in \mathbb{ C } $, and eigenvectors $ \{ c_i \}_{i=1}^{n'} $, $ c_i \in \mathbb{ C }^{n} $. Compute the Koopman eigenfunctions $ \{ u_i \}_{i=1}^{n'} $, $ u_i = \sum_{j=1}^n c_{ji} \varphi_i $. 
    \item Normalize each eigenfunction to unit norm $ \lVert u_i \rVert_{w}$, where $  \lVert u_i \rVert^2_{w}= \lvert c_{0i} \rvert^2 + \sum_{j=1}^{n}  \lvert c_{ji} \rvert^ 2 / \eta_j $. 
    \item Compute the Dirichlet energies $ E(u_i) = \sum_{j=1}^n  \lvert c_{ji} \rvert^2 $, and order the eigenfunctions and eigenvalues in order of increasing $ E(u_i) $. 
    \item Set $ \{ \Omega_i \}_{i=1}^m $ to the first $ m $ rationally independent frequencies $ \Imag \gamma_i $ and $ \{ \zeta_i \}_{i=1}^m $ to the corresponding eigenfunctions.    
    \end{enumerate}
  \end{itemize}
\end{alg}  

Next, we summarize the main steps in the numerical implementation of the vector field decomposition from Theorem~\ref{lemmaVDecomp} and the statistical forecasting scheme in Section~\ref{secForecasting} in Algorithms~\ref{algVDecomp} and~\ref{algStatisticalForecast}, respectively. In these algorithms, the notation $ \diag w $ refers to an $ N \times N $ diagonal matrix with diagonal elements equal to the components $ w_i $ of the inner product weight vector $ w $. Moreover, in Algorithm~\ref{algStatisticalForecast} we present our nonparametric forecast scheme for general vector-valued observables in $ \mathbb{ R }^s $ instead of the scalar-valued observables discussed in the main text. 

\begin{alg}[Vector field decomposition]\label{algVDecomp} \ 
  \begin{itemize}
  \item Inputs
    \begin{itemize}
    \item Observed time series $ \{ x_i \}_{i=0}^{N-1} $, $ x_i \in \mathbb{ R }^d $, at sampling interval $ T $
    \item Inner product weights $ w \in \mathbb{ R }^N $ from Algorithm~\ref{algBasis}
    \item Basic frequencies $ \{ \Omega_i \}_{i=1}^m $ and generating eigenfunctions $ \{ \zeta_i \}_{i=1}^m $ from Algorithm~\ref{algGenerators}
    \item Spectral order parameter $ l $
    \end{itemize}
  \item Outputs
    \begin{itemize}
    \item Vector fields $ \{ V_i \}_{i=1}^m $, where $ V_i = \{ V_{i0}, \ldots, V_{i,N-1} \}  $, and $ V_{ij} \in \mathbb{ R }^d $ is a tangent vector at $ x_j $
    \end{itemize}
  \item Preparatory steps
    \begin{enumerate}
    \item Arrange $ \{ x_i \}_{i=0}^{N-1} $ into a $ d \times N $ data matrix $ x = ( x_1 \cdots x_N ) $.
    \item Rescale the generators $ \zeta_i = ( \zeta_{0i}, \ldots, \zeta_{N-1,i} ) \in \mathbb{ C }^N $ to lie on the unit circle, $ \zeta_{ji} \leftarrow \zeta_{ji} / \lvert \zeta_{ji} \rvert $.
    \item Construct the index set $ K = \{ k_1, \ldots, k_n \} $ with $ k_i = ( q_1, \ldots, q_m ) $, $ \lvert q_i \rvert \leq l $ and $ n = ( 2 l + 1 )^m $. 
    \item For each $ i \in \{ 1, \ldots, n \} $ form the basis vector $ z_i = \prod_{j=1}^m \zeta_j^{q_j} $ with $ ( q_1, \ldots, q_m ) = k_i \in K $. Arrange the $ z_k $ in the $ N \times n $ matrix $ z = ( z_1 \cdots z_n ) $.
    \item Compute the Gramm matrix $ G \in \mathbb{ C }^{n\times n} $ with $ G_{ij} = \langle z_i, z_j \rangle_{w} $; in matrix notation, $ G = z^\dag \diag w z $.
    \end{enumerate}
  \item Execution steps
    \begin{enumerate}
    \item Compute the expansion coefficient matrix $ \hat F \in \mathbb{ C }^{N\times n } $ of the observation map s.t.\ $ \hat F = ( \hat F_1 \cdots \hat F_n ) $, $ \hat F_i = \langle z_i, F \rangle_{w} = \sum_{j=1}^N w_j x_j z^*_{ji} $. In matrix notation, $ \hat F = x \diag w z^* $.
    \item For $ i \in \{ 1, \ldots, m \} $ compute the diagonal matrix $ \bar \Omega_i \in \mathbb{ R }^{n\times n} $, where $ \bar \Omega_{i,jj} = q_i \Omega_i $ and $ q_i $ is the $ i $-th element of the index vector $ k_j \in K $. 
    \item Reconstruct the vector fields in~\eqref{eqVPushZeta} by forming the $ d \times N $ matrices $ V_i = \ii \hat F G^{-1 \top } \bar \Omega_i z^\top $. Set $ V_{ij} $ to the $ j $-th column of $ V_i $.     
    \end{enumerate}
  \end{itemize}
\end{alg}

\begin{alg}[Nonparametric forecast]\label{algStatisticalForecast} \ 
  \begin{itemize}
  \item Inputs
    \begin{itemize}
    \item Observable time series $ \{ f_i \}_{i=0}^{N-1} $, $ f_i \in \mathbb{ R }^s $, with sampling interval $ T $
    \item Inner product weights $ w \in \mathbb{ R }^N $ from Algorithm~\ref{algBasis}
    \item Discretely sampled initial probability density $ \rho_0 = (\rho_{00}, \ldots, \rho_{0,{N-1}} ) \in  \mathbb{ R }^N $, normalized such that $ \sum_{i=1}^N \rho_{0i} = 1 $
    \item Forecast times $ \{ t_i \}_{i=1}^{N'} $ in multiples of $ T $ 
    \item Basic frequencies $ \{ \Omega_i \}_{i=1}^m $ and generating eigenfunctions $ \{ \zeta_i \}_{i=1}^m $ from Algorithm~\ref{algGenerators}
    \item Spectral order parameter $ l $
    \end{itemize}
  \item Outputs
    \begin{itemize}
    \item Forecast densities $ \{ \rho( t_i ) \}_{i=1}^{N'} $, $ \rho( t_i ) \in \mathbb{ R }^N $
    \item Expectation values $ \{ \bar f( t_i ) \}_{i=1}^{N'} $, $ \bar f( t_i ) \in \mathbb{ R }^s $, of the observable  
    \end{itemize}
  \item Preparatory steps
    \begin{enumerate}
    \item Repeat the preparatory steps of Algorithm~\ref{algVDecomp}.
    \item Compute the frequency $ \omega_i = \sum_{j=1}^m q_j \Omega_j $ corresponding to each $ k_i = ( q_1, \ldots, q_m ) \in K $. 
    \end{enumerate}
  \item Execution steps
    \begin{enumerate}
    \item Compute the expansion coefficient matrix $ \hat f \in \mathbb{ C }^{s \times n } $ of the observable s.t. $ \hat f = f \diag w z^* $.
    \item Compute the expansion coefficients $ \hat \rho = ( \hat \rho_1, \ldots, \hat \rho_n ) $ of the initial density with $ \hat \rho = \rho_0 \diag w z^* $; pre-multiply with the inverse Gramm matrix to form the coefficients $ \tilde \rho_0 = G^{-1} \hat \rho $. 
    \item For each forecast time $ t_i $
      \begin{enumerate}
      \item Advance the expansion coefficients of the density using $ \tilde \rho_j( t_i ) = e^{-\ii \omega_j t_i } \tilde \rho_j $. Arrange the result in a row vector $ \tilde \rho( t_i ) = ( \tilde \rho_1( t_i ), \ldots, \tilde \rho_n( t_i ) ) \in \mathbb{ C }^n $.
      \item Reconstruct the density $ \rho( t_i ) $ using $ \rho( t_i ) = \tilde \rho( t_i ) z^\top $.
      \item Compute the expectation value of the observable $ \bar f( t_i ) = \hat f \tilde \rho^\dag( t_i ) $.
      \end{enumerate}
    \end{enumerate}
  \end{itemize}
\end{alg}
 
\section{\label{appGBar}Proof of Theorem~\ref{thmGBar}}

According to Theorem~\ref{lemmaVDecomp}, the vector fields $ v_i $ are nowhere-vanishing and linearly independent. Therefore, to verify the desired expression for $ \bar g $ it suffices to show that $ \bar g( v_i, v_j ) = B_{ij}  $. Indeed, using~\eqref{eqVPushZeta}, \eqref{eqGS}, and Theorem~\ref{lemmaVDecomp}, we find
  \begin{align*}
    \bar g( v_i, v_j ) &=  \lim_{s\to\infty} \frac{ 1}{ s } \sum_{k=0}^{s-1} \langle F_* \hat \Phi_{k*} v_i, F_* \hat \Phi_{k*} v_j \rangle \\
     &=  \lim_{s\to\infty} \frac{ 1}{ s } \sum_{k=0}^{s-1} \langle  v_i( F \circ \hat \Phi_k ), \hat  v_j( F \circ \hat \Phi_k ) \rangle \\
     & = \lim_{s\to\infty} \frac{ 1}{ s } \sum_{k=0}^{s-1} \left<  v_i\left(  \sum_p \hat F_p^* ( \zeta_1^{-p_1} \cdots \zeta_m^{-p_m} ) \circ \hat \Phi_k \right) , v_j\left(  \sum_q \hat F_q ( \zeta_1^{q_1} \cdots \zeta_m^{q_m} ) \circ \hat \Phi_k \right)  \right> \\
      & = \lim_{s\to\infty} \frac{ 1}{ s } \sum_{k=0}^{s-1} \left<  v_i\left(  \sum_p \hat F_p^* \zeta_1^{-p_1} \cdots \zeta_m^{-p_m}  e^{-\sum_{l=1}^m \ii k \Omega_l T p_l} \right) , v_j\left(  \sum_q \hat F_q \zeta_1^{q_1} \cdots \zeta_m^{q_m}  e^{\sum_{n=1}^m \ii k \Omega_n T q_n} \right)  \right> \\
      & = \lim_{s\to\infty} \frac{ 1}{ s } \sum_{k=0}^{s-1} \left<  \left(  \sum_p \hat F_p^* \Omega_i p_i \zeta_1^{-p_1} \cdots \zeta_m^{-p_m}  e^{-\sum_{l=1}^m \ii k \Omega_l T p_l} \right) , \left(  \sum_q \hat F_q \Omega_j q_j \zeta_1^{q_1} \cdots \zeta_m^{q_m}  e^{\sum_{n=1}^m \ii k \Omega_n T q_n} \right)  \right> \\
      & =    \sum_{p,q} \langle \hat F_p^*, \hat F_q \rangle \Omega_i \Omega_j p_i q_j \zeta_1^{q_1-p_1} \cdots \zeta_m^{q_m-p_m} \left( \lim_{s\to\infty} \frac{ 1}{ s } \sum_{k=0}^{s-1} e^{\sum_{l=1}^m \ii k \Omega_l T (q_l-p_l)} \right) \\
      & =  \sum_{p,q} \langle \hat F_p^*, \hat F_q \rangle \Omega_i \Omega_j p_i q_j \zeta_1^{q_1-p_1} \cdots \zeta_m^{q_m-p_m} \delta_{q_1p_1} \cdots \delta_{q_mp_m} \\
      & = \sum_{p} \lVert \hat F_p \rVert^2  \Omega_i \Omega_j p_i p_j. 
  \end{align*}  
Note that the sum in the last step is absolutely summable due to the exponential decay of the Fourier coefficients $ \hat F_p $ of the $ C^\infty $ function $ F $,  which allows us to interchange limits with respect to $ s $ and $ p,q $.  Also, we have used the result $ v_i\left(  \sum_k \hat f_k z_k \right) = \tilde v_i\left(  \sum_p \hat f_k z_k \right) = \sum_p \ii \Omega_i k_i \hat f_k $, which holds for any $ f = \sum_k f_k z_k \in C^\infty( M ) \subset D( \tilde v_i ) $ because the generators $ \tilde v_i $ (which are maximal skew-adjoint extensions of $ v_i $; see the text below Theorem~\ref{lemmaVDecomp}) are closed.  We have further made use of the relation $ \lim_{s\to\infty} \frac{ 1}{ s } \sum_{k=0}^{s-1} e^{\sum_{l=1}^m \ii k \Omega_l T (q_l-p_l)} = \delta_{q_1p_1} \cdots \delta_{q_mp_m} $, which follows by ergodicity of the discrete-time system $ ( M, \mathcal{ B }, \mu,  \hat \Phi_n ) $ (which implies in turn that $ \Omega_l T  $ is irrational for all $ l \in \{ 1, \ldots, m \} $). Similarly, we compute
\begin{align*}
  B_{ij} &= \int_M g( v_i, v_j ) \, d\mu \\
  &= \int_M  \langle F_* v_i, F_* v_j \rangle \, d\mu \\
  & = \int_M  \langle v_i( F ), v_j( F ) \rangle \, d\mu \\
  & = \int_M \left< v_i \left( \sum_p \hat F^*_p \zeta^{-p_1}_1 \cdots \zeta^{-p_m}_m  \right), v_j \left( \sum_q \hat F^*_q \zeta^{q_1}_1 \cdots \zeta^{q_m}_m  \right) \right> \, d\mu \\
  & = \int_M \left< \left( \sum_p \hat F^*_p \Omega_i p_i \zeta^{-p_1}_1 \cdots \zeta^{-p_m}_m  \right),  \left( \sum_q \hat F^*_q \Omega_j q_j \zeta^{q_1}_1 \cdots \zeta^{q_m}_m  \right) \right> \, d\mu \\
& = \sum_{p,q} \langle \hat F^*_p, F_q \rangle \Omega_i \Omega_j p_i q_i \langle \zeta_1^{p_1} \cdots \zeta_m^{p_m}, \zeta_1^{q_1} \cdots \zeta_m^{q_m} \rangle \\
& = \sum_{p,q} \langle \hat F^*_p, F_q \rangle \Omega_i \Omega_j p_i q_i \delta_{p_1 q_1 } \cdots \delta_{p_m q_m} \\
&= \sum_{p} \lVert \hat F_p \rVert^2 \Omega_i \Omega_j  p_i p_j. 
\end{align*}  
We therefore have $ \bar g( v_i, v_j ) = B_{ij}  $ as claimed above.

Next, to verify that $ \Phi_{n,t*} $ is an isometry of $ \bar g $ as claimed in part~(i) of the Theorem, it is sufficient to show that $ \bar g( v_i\vert_a, v_j\rvert_a) = \bar g(   \Phi_{n,t*}v_i \rvert_a,  \Phi_{n,t*} v_j \rvert_a ) $ for all $ i, j \in \{ 1, \ldots, m \} $ and $ a \in M $. This can be confirmed using Theorem~\ref{lemmaVDecomp}(iv), according to which
\begin{align*}
  \bar g(   \Phi_{n,t*} v_i \rvert_a,  \Phi_{n,t*}v_j \rvert_a ) &= \bar g(  v_i \rvert_{ \Phi_{n,t}(a)}, v_j \rvert_{ \Phi_{n,t}(a)} ) \\
  &= \sum_{k,l=1}^m B_{kl} ( \beta_k( v_i ) ) \rvert_{\Phi_{n,t}(a) }  ( \beta_l(  v_j ) )\rvert_{\Phi_{n,t}(a) } \\
  & = B_{ij} \\
  & = \bar g( v_i\vert_a, v_j\rvert_a). 
\end{align*}
Moreover, to show that $ \bar g $ is flat as stated in part~(ii) we note that each $ v_i $ is a Killing vector field since it generates a one-parameter group of isometries $ \Phi_{i,t} $, and therefore, by Theorem~\ref{lemmaVDecomp}, $ \{ v_i \}_{i=1}^m $ is a set of $ m $ linearly independent, mutually commuting, Killing vector fields. It is a standard result from differential geometry that whenever such vector fields exist in an open neighborhood of a point in $ M $ the metric is flat at that point, and the global flatness of $ \bar g $ follows from the fact that the $ v_i $ are globally defined on $M $. 

Finally, we check that $ \dvol_{\bar g}/d\mu  $ is a constant as stated in part~(iii) working in local coordinates $ \{ \theta^1, \ldots, \theta^m \} $ such that $ v_i = \frac{\partial\;}{ \partial \theta^i } $. In these coordinates $ \bar g\rvert_a $ has constant components $ \bar g_{ij} = B_{ij} $ at every $ a \in M $, and we have $ \dvol_{\bar g}\rvert_a = \sqrt{ \det B } \, d\theta^1 \wedge \cdots \wedge d\theta^m $, where $ B = [B_{ij}] $.  Similarly, we can expand $ d \mu\rvert_a = \gamma(\theta) \, d\theta^1 \wedge \cdots d\theta^m $, where $ \gamma $ is a smooth positive function. Now, by Theorem~\ref{lemmaVDecomp}(iii), the $ v_i $ have vanishing $ \mu $-divergence, and using the local-coordinate expression for the divergence in~\eqref{eqGradDiv} we obtain $ 0 = \divr_\mu v_i =  \gamma^{-1} \partial \gamma / \partial \theta^i $. Thus, $ ( \dvol_{\bar g} / d\mu )\rvert_{a} = \sqrt{ \det B } / \gamma(\theta) = \Gamma $ is locally a constant, and since the $ v_i $ are smooth and defined at every point in $ M $, $ \dvol_{\bar g} / d\mu $ is also globally constant. \qed

\section{\label{appNoise}Treatment of i.i.d.\ noise through delay embeddings and diffusion maps}

In this Appendix, we study the asymptotic properties of the procedure employed in Section~\ref{secNoise} to remove the effects of i.i.d.\ observational noise from the diffusion eigenfunctions computed through Algorithm~\ref{algBasis}. In particular, we are interested in the behavior of diffusion maps at small kernel bandwidth $ \epsilon $, and in the limit of infinitely many delays $ s $  and a suitable scaling $ N(s) \gg s $ of the number of samples $ N $. 
\begin{thm} 
  \label{thmNoise}
  Let $ f $ be an observable in $ L^2( M, \mu ) $ with the values $ ( f( a_{s-1} ), \ldots, f( a_{N-1} ) ) = \vec f  $ on the sampled states $ \{ a_i \}_{i=s-1}^{N-1} $. Under the assumptions on the noise stated in Section~\ref{secNoise} and for an ergodic dynamical system with a pure point spectrum and smooth Koopman eigenfunctions, for $ \mu $-a.e.\ starting state $ a_0 $ in the training data, a.s.\ with respect to the noise, and uniformly with respect to $a_i \in  M $,
  \begin{displaymath}
    \lim_{\substack{s\to\infty\\ N(s)/s \to \infty }}( \tilde P \vec f )_i = \mathcal{ P }_\epsilon f( a_i ) + O( \epsilon^2 ),
  \end{displaymath}
  where $ \mathcal{ P }_\epsilon $ is the averaging operator on $ L^2( M, \mu ) $ constructed through~\eqref{eqG} for the kernel in~\eqref{eqKVB2}, and $ \tilde P $ is the corresponding Markov matrix constructed from the noisy data via~\eqref{eqPNoise}.     
\end{thm}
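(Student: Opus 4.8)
The plan is to trace the effect of the i.i.d.\ noise through every ingredient of the diffusion-maps construction~\eqref{eqG}, show that as $s\to\infty$ each noise-induced perturbation concentrates (with high probability, uniformly over all sampled pairs) onto a deterministic offset that is annihilated by the normalizations, and then close by the standard small-bandwidth expansion of diffusion maps. The pure point hypothesis is used exactly to keep the \emph{noise-free} part well behaved in the limit of many delays, via Theorem~\ref{thmGBar}.

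First I would expand the noisy delay-coordinate squared distances. Writing $\tilde X_i = X_i + \Xi_i$ with $X_i = F_s(a_i)$ and $\Xi_i = (\xi_i,\dots,\xi_{i-s+1})$, and using the time-averaged inner product of Section~\ref{secDelay},
\[
  \lVert \tilde X_i - \tilde X_j\rVert^2 = \lVert X_i - X_j\rVert^2 + \frac{2}{s}\sum_{k=0}^{s-1}\langle x_{i-k}-x_{j-k},\ \xi_{i-k}-\xi_{j-k}\rangle + \frac{1}{s}\sum_{k=0}^{s-1}\lVert \xi_{i-k}-\xi_{j-k}\rVert^2.
\]
The clean term $\lVert X_i - X_j\rVert^2$ stabilizes as $s\to\infty$ because, for a pure point system, Theorem~\ref{thmGBar} gives $g_s \to \bar g$, so the chordal delay distances converge. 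For fixed $i\neq j$ the cross term is an average of bounded (compactness of $F(M)$), mean-zero, finitely dependent random variables---overlapping delay windows make each $\xi_\ell$ enter at most two summands---hence it tends to $0$ almost surely; the noise term tends to $\mathbb{E}\lVert\xi_{i-k}\rVert^2+\mathbb{E}\lVert\xi_{j-k}\rVert^2 = 2R^2$, since $\mathbb{E}\langle\xi_{i-k},\xi_{j-k}\rangle=0$ when $i-k\neq j-k$. Using the finite third and fourth moment hypotheses I would upgrade these to a uniform statement: a Bernstein-type inequality for bounded (resp.\ $L^4$) finitely dependent variables gives deviation probabilities of order $e^{-c s t^2}$, and a union bound over the $O(N^2)$ pairs succeeds whenever $\log N \ll s = o(N)$, the regime of the theorem. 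Thus, with high probability and uniformly in $i,j$, $\lVert \tilde X_i - \tilde X_j\rVert^2 = \lVert X_i - X_j\rVert^2 + 2R^2 + \rho_{ij}$ with $\sup_{i,j}\lvert\rho_{ij}\rvert \to 0$ as $s\to\infty$.

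Next I would feed this into the kernel~\eqref{eqKVB2}. The offset $2R^2$ also multiplicatively biases the self-tuned density estimates, but by construction in~\ref{appNoise} the functions $\tau_{s,\epsilon}$ are precisely the rescaling that removes this $R$-dependent bias, so $\tau_{s,\epsilon}(\tilde X_i)$ agrees, up to a global constant, with the clean sampling density $\sigma_s(a_i)$ at $O(\epsilon)$ accuracy, and $\sigma_s\to\bar\sigma$ is constant as $s\to\infty$ by Theorem~\ref{thmGBar}(iii). The exponent in~\eqref{eqKVB2} then splits, and the $2R^2$ piece contributes a factor $\exp\!\bigl(-2R^2/(\epsilon\,\tau^{-1/m}_{s,\epsilon}(\tilde X_i)\tau^{-1/m}_{s,\epsilon}(\tilde X_j))\bigr)$ which, because the $\tau$'s are asymptotically the constant $\bar\sigma$, is a single constant $\Lambda$ common to all matrix entries (modulo corrections absorbed into the error budget). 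What remains is a variable-bandwidth kernel of the class~\eqref{eqKVBE} built on the clean states $\{a_i\}$ with bandwidth functions $\tau_{s,\epsilon}$: in the normalizations~\eqref{eqG}, \eqref{eqPNoise} the factor $q/q_\epsilon$ removes the residual density bias exactly as in Section~\ref{secDataDrivenBasis}, and the final division by $\mathcal{H}_\epsilon 1$ cancels $\Lambda$. Hence $\tilde P$ acts on vectors the way $\mathcal{P}_\epsilon$ (as in the theorem statement) does, up to the accumulated perturbations; invoking the standard Taylor expansion of diffusion maps (cf.~\eqref{eqPDM} and \cite{BerrySauer15,Giannakis15}) yields $(\tilde P\vec f)_i = \mathcal{P}_\epsilon f(a_i) + O(\epsilon^2)$ uniformly on $M$, once $s$ is large enough that every noise-induced correction is negligible relative to $\epsilon^2$.

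The hard part, I expect, is the bookkeeping in the preceding paragraph: showing that the offset $2R^2$---which enters divided by $\epsilon$---together with the spatial variation of $\tau_{s,\epsilon}$, produces corrections genuinely of order $\epsilon^2$, so that the $O(\epsilon)$ Laplacian term in~\eqref{eqPDM} is untouched and $(I-\tilde P)/\epsilon$ still consistently approximates $\upDelta_{h_s}$. This is what forces the careful $R$-independent construction of $\tau_{s,\epsilon}$ in~\ref{appNoise} and a matching (automatically selected) bandwidth $\epsilon$ in the noisy setting. A secondary technical point is making the passage from almost-sure pointwise convergence of the distance corrections to the uniform-in-$(i,j)$, high-probability estimate quantitative, which is exactly where the finite-moment assumptions and the growth relation between $s$ and $N$ are consumed.
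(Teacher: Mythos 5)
Your proposal is correct and follows essentially the paper's own argument: the noisy squared delay distances concentrate on the clean distances plus the constant offset $2R^2$ (non-overlapping noise windows dominating since $s=o(N)$), the resulting constant multiplicative kernel bias cancels under the density-estimation and Markov normalizations in~\eqref{eqPNoise}, Theorem~\ref{thmGBar} makes the clean limiting geometry flat with constant sampling density so that $\tau_{s,\epsilon}$ is a density estimate of sufficient ($O(\epsilon^2)$) accuracy, and the standard small-$\epsilon$ expansion closes the argument. The only differences are cosmetic: the paper obtains the high-probability/uniformity statement softly via expectations, an $O(s^{-1})$ variance bound, and the pointwise ergodic theorem rather than your Bernstein-plus-union-bound route, and it actually carries out the construction of $\tau_{s,\epsilon}$ (a globally normalized fixed-bandwidth kernel, \eqref{eqKappa}) whose key properties you take as given while correctly identifying them as the crux.
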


To prove Theorem~\ref{thmNoise}, note that the noise variables in delay embedding space are sequences $ \Xi_i = ( \xi_i, \xi_{i-1}, \ldots, \xi_{i-s+1} ) $ of the i.i.d.\ noise variables $ \xi_i $, and for fixed $i $ and $ N(s) \gg s $, the fraction of indices $ j $ such  that $ \Xi_i $ and $ \Xi_j $ have at least one overlapping element (which is $ O( s/N(s) ) $) vanishes as $ s \to\infty$. Thus, it suffices to consider the case $ \lvert i - j \rvert > s  $, where
\begin{displaymath}
  \mathbb{ E }( \lVert \Xi_i - \Xi_j \rVert^2 ) = \frac{ 1 }{ s } \sum_{k=0}^{s-1} \mathbb{ E }(\lVert \xi_{i-k} - \xi_{j-k} \rVert^2 ) = \frac{ 1 }{ s }  \sum_{k=0}^{s-1}( \mathbb{ E }(\lVert \xi_{i-k} \rVert^2) + \mathbb{ E }( \lVert \xi_{j-k} \rVert^2 )  ) = 2 R^2,
\end{displaymath}
and we have used the facts that $ \mathbb{ E }( \xi_i ) = 0 $ and $ \mathbb{ E }( \lVert \xi_i \rVert^2 ) = R^2 $. Similarly, writing  $ \tilde X_i - \tilde X_j = X_i - X_j + \Xi_i - \Xi_j $, it follows that the squared pairwise distances $  \lVert \tilde X_i - \tilde X_j \rVert^2 $ between the noisy data have the expectation 
\begin{displaymath}
  \mathbb{ E }( \lVert \tilde X_i - \tilde X_j \rVert^2 ) = \lVert X_i - X_j \rVert^2 + \mathbb{ E }( \lVert \Xi_i - \Xi_j \rVert^2 ) = \lVert X_i - X_j \rVert^2 + 2 R^2. 
\end{displaymath}

Define now the fixed bandwidth kernel $ \bar K_\epsilon( \tilde X_i, \tilde X_j ) = e^{-\lVert \tilde X_i - \tilde X_j \rVert^2 / \epsilon }$ and the normalized kernel
\begin{equation}
  \label{eqKappa}
  \kappa_\epsilon( \tilde X_i, \tilde X_j ) = \frac{ \bar K_\epsilon( \tilde X_i, \tilde X_j ) }{ ( N(s) -s + 1 )^{-1} \sum_{k,l=s-1}^{N(s)-1} \bar K_\epsilon( \tilde X_k, \tilde X_l ) }.
\end{equation}
Note that the sum in the denominator in~\eqref{eqKappa} runs from $ s- 1 $ to $ N(s) -1 $ (instead of 0 to $ N(s) -1$) since the first $ s - 1 $ samples $ \tilde x_i $ are used for delay embedding. We will use the kernel in~\eqref{eqKappa} to estimate the sampling density $ \sigma_s = d \mu/ \dvol_{g_s} $  of the invariant measure relative to the Riemannian measure at $ s $ delays via the quantity
\begin{displaymath}  
\tau_{s,\epsilon}( \tilde X_i ) = \frac{ 1 }{ ( N(s) - s + 1 ) } \sum_{j=s-1}^{N(s)-1} \kappa_\epsilon( \tilde X_i, \tilde X_j ).
\end{displaymath}
Because $ N(s ) \gg s  $, the contribution of the delay sequences $ \tilde X_j $ in the sum with overlapping samples with $ \tilde X_i $ vanishes as $ N\to\infty $. Thus, we may compute $ \lim_{s\to\infty} \tau_{s,\epsilon}( \tilde X_i ) $ by formally assuming that $ \tilde X_i $ and $ \tilde X_j $ are non-overlapping. In particular, since the random variables $ \xi_i  $ have finite moments up to order 4, $ \lVert \xi_i \rVert^2 $ have finite variance and by the law of large numbers, as $ s \to \infty $,  $ \lVert \Xi_i - \Xi_j \rVert^2 $ converges almost surely to $ 2 R^2 $ in the non-overlapping case, and moreover
\begin{equation}
  \label{eqKLim}
  \bar K_\epsilon( \tilde X_i, \tilde X_j ) \xrightarrow[]{\text{a.s.}} e^{-2R^2/\epsilon} \bar K_\epsilon( X_i, X_j ),
\end{equation}  
where we used the fact that $ \bar K_\epsilon $ is continuous and bounded. Hence, $ \bar K_\epsilon( \tilde X_i, \tilde X_j ) $ acquires a constant multiplicative bias $ e^{-2R^2/\epsilon} $, but this bias appears in both the numerator and denominator in~\eqref{eqKappa} and is therefore canceled. We therefore conclude from these arguments and the Birkhoff ergodic theorem that as $ s \to \infty $ and $ N(s) \gg s $, 
\begin{displaymath}
  \tau_{s,\epsilon}( \tilde X_i ) \xrightarrow[]{\text{a.s.}}  \bar \tau_\epsilon( X_i ) = \lim_{s\to\infty} \int_M \kappa_\epsilon( F_s( a_i ), F_s( a_j ) ) \, d\mu(a_j) = \lim_{s\to\infty} \frac{ \int_M \bar K_\epsilon( F_s( a_i ), F_s( a_j ) ) \, d\mu( a_j ) }{ \int_M\left( \int_M \bar K_\epsilon( F_s( a_k ), F_s( a_l ) ) \, d\mu( a_k ) \right) d\mu( a_l ) },
\end{displaymath}
where $ a_i $ the unique state in $ M $ underlying the noisy observation $ \tilde X_i $ (i.e., $ \tilde X_i = X_i + \Xi_i $ with $ X_i = F_s( a_i ) )$. Note that the cancellation of the multiplicative bias term $ e^{-2R^2/\epsilon} $ would not have taken place had we used the variable-bandwidth kernel $ \tilde K_\epsilon $ in Algorithm~\ref{algBasis} for density estimation, as the bias would depend on $ i, j $ in that case. 

Next,  using  asymptotics for Gaussian integrals on compact manifolds (e.g., \cite{CoifmanLafon06,BerrySauer16b}) it can be shown that, uniformly with respect to $ a_i \in M $,
\begin{align*}
  \frac{ 1 }{ \epsilon^{m/2} } \int_M \bar K_\epsilon( F_s(a_i), F_s(a_j) ) \, d\mu(a_j) &=  \frac{ 1 }{ \epsilon^{m/2} }  \int_M \bar K_\epsilon( F_s(a_i), F_s(a_j) ) \sigma_s( a_j ) \dvol_{g_s}( a_j) \\
  &= c_s \sigma_s ( a_i ) + \sigma'_s(a_i) \epsilon + O( \epsilon^2 ),
\end{align*}
where $ c_s $ is a constant that does not depend on $ \epsilon$, and $ \sigma'_s $ is a function that vanishes if $ g_s $ is flat. Indeed, according to Theorem~\ref{thmGBar}, $ \bar g $ is flat and has uniform volume form relative to $ \mu $, so that $ \lim_{s\to\infty} \sigma_s = 1 / \Gamma $, $ \lim_{s\to\infty} \sigma'_s = 0 $, and    
\begin{equation}
  \label{eqTauLim}
  \bar \tau_\epsilon( X_i ) = \lim_{s \to \infty} \frac{ \sigma_s( a_i ) + \sigma'_s( a_i ) \epsilon + O( \epsilon^2 ) }{ \int_M [ ( \sigma_s( a_l ) )^2 + \sigma_s( a_l ) \sigma_s'(a_l) \epsilon ] \, \dvol_{g_s}( a_l ) + O( \epsilon^2 ) } = C^m + O( \epsilon^2 ),
\end{equation}
where $ C $ is a positive constant. We therefore conclude that in the limit of large data and infinitely many delays $ \tau_\epsilon( \tilde  X_i ) $ converges up to $ O( \epsilon^2 ) $ to a constant on $ M $. Since in this limit the sampling density relative to the Riemannian measure is also a constant, $ \tau_\epsilon( \tilde X_i ) $ provides an $ O( \epsilon^2 ) $ estimate of the true sampling  density up to a proportionality constant, which is of sufficient accuracy for the asymptotics  in \cite{BerryHarlim16} to hold.

Consider now the diffusion maps normalization performed on the kernel $ K_\epsilon $ in~\eqref{eqKVB2} for the noisy data. Following the approach described in Section~\ref{secDataDrivenBasis}, we normalize $ K_\epsilon( \tilde X_i, \tilde X_j ) $ to construct a Markov matrix $ \tilde P $ such that 
\begin{equation}
  \label{eqPNoise}
  \tilde P_{ij} = \frac{ \tilde H_{ij} }{ \sum_{k=s-1}^{N-1} \tilde H_{ij} }, \quad \tilde H_{ij} = \frac{ K_\epsilon( \tilde X_i, \tilde X_j ) }{ \sum_{k=s-1}^{N-1} K_\epsilon( \tilde X_j, \tilde X_k ) }. 
\end{equation}
Then, we use $ \tilde P $ to approximate the action $ \mathcal{ P }_\epsilon f $ of the averaging operator $ \mathcal{ P }_\epsilon $ associated with the kernel on the observable $ f  $ through the matrix-vector product $ \tilde P \vec f $. By the same arguments used to derive~\eqref{eqTauLim}, we can compute 
\begin{displaymath}
  \lim_{s\to\infty} ( \tilde P \vec f )_i = \lim_{s\to\infty} \frac{ 1}{ N(s)-s+1} \sum_{j=s+1}^{N(s)-1} \tilde P_{ij} f_j
\end{displaymath}
treating the matrix elements $ \tilde P_{ij} $ as corresponding to non-overlapping sequences $ \tilde X_i, \tilde X_j $ in~\eqref{eqPNoise}. In that case, by the law of large numbers and~\eqref{eqTauLim}, 
\begin{displaymath}
  K_\epsilon( \tilde X_i, \tilde X_j ) = \exp\left( - \frac{ \lVert \tilde X_i - \tilde X_j \rVert^2 }{ \epsilon \tau^{-1/m}_{s,\epsilon}( \tilde X_i ) \tau_{s,\epsilon}^{-1/m }( \tilde X_j )  } \right) \xrightarrow[]{\text{a.s.}} \exp \left( - \frac{ 2 R^2 }{ \epsilon ( C^2 + O( \epsilon^2 ) ) } \right) K_\epsilon( X_i, X_j ).    
\end{displaymath}  
Using this result together with the pointwise ergodic theorem to convert sums in~\eqref{eqPNoise} to integrals with respect to $ \mu $, we conclude that as $ s \to \infty $, $ N(s ) \gg s  $, 
\begin{displaymath}
  ( \tilde P \vec f )_i \xrightarrow[]{\text{a.s.}} \mathcal{ P }_\epsilon f( a_i ) + O( \epsilon^2 ).
\end{displaymath}
 proving the Theorem. \qed

We therefore see that provided that sufficiently many delays are used,  the effect of i.i.d.\ noise is to produce an $ O( \epsilon^2 ) $ bias in the pointwise approximation of the action of  $ \mathcal{ P }_\epsilon $ on functions. However, this bias is of the same order as the error in approximating $ \upDelta_{\bar g} $ through $ \mathcal{ P }_\epsilon $ (see~\eqref{eqPDM});  that is,  in the limit of infinitely many delays, $ ( I - \tilde P \vec f_i )/\epsilon $ approximates $ \upDelta_{\bar g}f( a_i ) $ at the same accuracy as $ ( I - \mathcal{ P }_\epsilon )f( a_i ) / \epsilon $.  

\section*{References}

\end{document}